\newtheorem{theorem}{Theorem} 
\newtheorem{lemma}{Lemma}[section]     
\newtheorem{corollary}[lemma]{Corollary}
\newtheorem*{corollary*}{Corollary}
\newtheorem{proposition}[lemma]{Proposition}
\newtheorem{definition}[lemma]{Definition}
\theoremstyle{definition}
\newtheorem{example}{Example}
\newtheorem{remark}[lemma]{Remark}
\newtheorem{hypothesis}{Hypothesis}
\renewcommand{\mod}{\text{ mod }}
\newcommand{\R}{\mathbb{R}}
\newcommand{\N}{\mathbb{N}}
\newcommand{\Z}{\mathbb{Z}}
\newcommand{\T}{{\mathbb{T}}}
\newcommand{\I}{{\mathbb{I}}}
\newcommand{\cD}{\mathcal{D}}
\newcommand{\cE}{\mathcal{E}}
\newcommand{\cG}{\mathcal{G}}
\newcommand{\cP}{\mathcal{P}}
\newcommand{\cS}{\mathcal{S}}
\newcommand{\cM}{\mathcal{M}}
\newcommand{\GA}{{\rm GA}}
\newcommand{\ellss}{{\ell^{\rm ss}}}
\newcommand{\id}{{\rm id}}
\newcommand{\supp}{\operatorname{supp}}
\newcommand{\dimH}{\operatorname{dim}_H}
\newcommand{\dom}{\mathsf{D}}
\newcommand{\CLip}{C^{1+{\rm Lip}}}
\numberwithin{equation}{section}
\title{Chaotically driven sigmoidal maps}
\author{Gerhard Keller and Atsuya Otani}
\address{Department Mathematik, Universit\"at Erlangen-N\"urnberg, 91058 Erlangen, Germany}
\email{keller@math.fau.de, otani@math.fau.de}
\thanks{These notes profited enormously from several inspiring workshops in the framework of the DFG Scientific Network ``Skew Product Dynamics and Multifractal Analysis'' organised by Tobias Oertel-J\"ager. The research was funded by the German RAesearch Foundation (DFG Ke 514/8-1).}
\subjclass[2010]{27D25, 27D30, 27D35}
\keywords{skew product, negative Schwarzian, attractor, Hausdorff dimension}
\date{\today}
\begin{document}

\begin{abstract}
We consider skew product dynamical systems 
$f:\Theta\times\R\to\Theta\times\R, f(\theta,y)=(T\theta,f_\theta(y))$
with a (generalized) baker transformation $T$ at the base and
uniformly bounded increasing $C^3$ fibre maps $f_\theta$ with negative Schwarzian derivative.
Under a partial hyperbolicity assumption that ensures the existence of strong stable fibres for $f$ we prove that the presence of these fibres restricts considerably the possible structures of invariant measures - both topologically and measure theoretically, and that this finally allows to provide a ``thermodynamic formula'' for the Hausdorff dimension of set of those base points over which 
the dynamics are synchronized, i.e. over which  the global attractor consists of just one point.
\end{abstract}

\maketitle

\section{Introduction}
We consider skew product dynamical systems with monotone one-dimensional fibre maps:
\begin{equation*}
f:\Theta\times\R\to\Theta\times\R,\quad f(\theta,y)=(T\theta,f_\theta(y))\ ,
\end{equation*}
where $T:\Theta\to\Theta$ is a suitable invertible map (often a homeomorphism), and where the fibre maps $f_\theta:\R\to\R$ are uniformly bounded increasing $C^3$-maps with negative Schwarzian derivative
\begin{equation*}
\cS f_\theta:=\frac{f_\theta'''}{f_\theta'}-
\frac{3}{2}\left(\frac{f_\theta''}{f_\theta'}\right)^2<0
\end{equation*}

When the base transformation $T:\Theta\to\Theta$ is just a bi-measurable bijection of a measurable space, the following measure theoretic classification is available. The global attractor $\GA:=\bigcap_{n=0}^\infty f^n(\Theta\times\R)$
is the region between a lower invariant graph $\varphi^-$ and
an upper one $\varphi^+$. 
There is a third relevant graph $\varphi^*$ that satisfies $\varphi^-\leqslant\varphi^*\leqslant\varphi^+$.
More precisely we have the following: Denote by $\cM_T(\Theta)$ the set of all $T$-invariant probability measures on $\Theta$ and by $\cE_T(\Theta)$ the ergodic ones among them. For $\nu\in\cM_T(\Theta)$ and $i\in\{-,\ast,+\}$ let
\begin{equation*}
\lambda^i(\nu):=\int_\Theta\log f_\theta'(\varphi^i(\theta))\,d\nu(\theta)\ ,
\end{equation*}
and for any measurable map $\varphi:\Theta\to\R$ denote
$\nu_\varphi:=\nu\circ(\id,\varphi)^{-1}$.

\begin{proposition}\cite[Proposition 2.21, based on \cite{Jager2003}]{Otani2015} \label{prop:classification-general}
$\varphi^-$ and $\varphi^+$ are measurable $T$-invariant graphs defined everywhere as pullback-limits and $\GA=\{(\theta,y)\in\Theta\times\R: \varphi^-(\theta)\leqslant y\leqslant\varphi^+(\theta)\}$.
 Furthermore, there is a  measurable graph $\varphi^\ast$ defined everywhere\footnote{The function $\varphi^*$ can be chosen e.g. as one of the functions $\varphi^{*-}$ or $\varphi^{*+}$ from \cite[Definition 2.16]{Otani2015}. We note that there might be points where $\varphi^{*-}>\varphi^{*+}$, but these points form a null set for each invariant measure. Definition 2.19 of a separating Graph in \cite{Otani2015} must be changed slightly in order to deal with these exceptional points: at such points a separating graph need not lie above $\varphi^{*-}$ and below $\varphi^{*+}$; e.g. one can set it to be equal to $\varphi^-$ on this set.}, satisfying $\varphi^-\leqslant\varphi^\ast\leqslant\varphi^+$ and such that for each
$\nu\in\cE_T(\Theta)$ exactly one of the following three cases occurs:
\begin{compactenum}[(i)]
\item $\varphi^-=\varphi^\ast=\varphi^+$ $\nu$-a.s. and $\lambda^\pm(\nu)\leqslant0$.
\item $\varphi^-=\varphi^\ast<\varphi^+$ $\nu$-a.s. and $\lambda^+(\nu)<0=\lambda^-(\nu)$, or $\varphi^-<\varphi^\ast=\varphi^+$ $\nu$-a.s. and
$\lambda^-(\nu)<0=\lambda^+(\nu)$.
\item $\varphi^-<\varphi^\ast<\varphi^+$ $\nu$-a.s., $\lambda^-(\nu)<0$,$\lambda^+(\nu)<0$, $\lambda^\ast(\nu)>0$, and $\varphi^\ast$ is $\nu$.a.s. $T$-invariant.
\end{compactenum}
In all three cases holds:
\begin{compactenum}[-]
\item Each graph which is $\nu$-a.s. $T$-invariant equals $\varphi^-$, $\varphi^\ast$ or $\varphi^+$ $\nu$-a.s.
In particular, $\nu_{\varphi^-},\nu_{\varphi^*}$ and $\nu_{\varphi^+}$ are the only $f$-invariant ergodic probability measures that project to $\nu$.
\item For $\nu$-a.e. $\theta\in\Theta$ and every $y \in [ \varphi ^- (\theta ) , \varphi ^+ ( \theta) ] \setminus \{ \varphi^\ast(\theta) \}$
\begin{equation}
\begin{split}\label{eq:separation}
\lim_{n\to\infty}|f_\theta^n(y)-\varphi^-(T^n\theta)|=0
&\text{ if }y<\varphi^\ast(\theta)\ ,\\
\lim_{n\to\infty}|f_\theta^n(y)-\varphi^+(T^n\theta)|=0
&\text{ if }y>\varphi^\ast(\theta)\ .
\end{split}
\end{equation}
\end{compactenum}
\end{proposition}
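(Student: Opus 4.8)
The plan is to proceed in four stages: first, construct $\varphi^\pm$ as pullback limits and identify $\GA$; second, extract from the negative Schwarzian derivative a rigidity statement that bounds the number and type of $f$-invariant graphs squeezed between $\varphi^-$ and $\varphi^+$; third, define $\varphi^*$ and verify the trichotomy together with the two concluding bullet points; fourth, deduce the separation \eqref{eq:separation}. Throughout one fixes an ergodic $\nu\in\cE_T(\Theta)$ and works over $\nu$-a.e. fibre.

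For the first stage, uniform boundedness of the $f_\theta$ gives a compact interval $K$ with $f_\theta(\R)\subseteq K$ for all $\theta$, so for $n\ge1$ the fibre of $f^n(\Theta\times\R)$ over $\theta$ is $F^n_\theta(K)$, where $F^n_\theta:=f_{T^{-1}\theta}\circ\dots\circ f_{T^{-n}\theta}$. Since each $f_{T^{-k}\theta}$ is increasing and maps $K$ into $K$, these are nested compact intervals, so $\varphi^+(\theta):=\lim_n F^n_\theta(\max K)$ and $\varphi^-(\theta):=\lim_n F^n_\theta(\min K)$ exist pointwise, are measurable, satisfy $f_\theta\circ\varphi^\pm=\varphi^\pm\circ T$, and $\GA=\{(\theta,y):\varphi^-(\theta)\le y\le\varphi^+(\theta)\}$. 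This is Jäger's monotone-cocycle argument; the same bookkeeping shows that every $y>\varphi^+(\theta)$ (resp. $y<\varphi^-(\theta)$) is attracted forward to $\varphi^+$ (resp. $\varphi^-$).

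For the second stage I would use the composition formula for the Schwarzian to see that every $f^n_\theta$ has negative Schwarzian on $\R$, whence the Minimum Principle gives, for every $y$ in the attractor fibre $I_\theta:=[\varphi^-(\theta),\varphi^+(\theta)]$,
\[
(f^n_\theta)'(y)\ \ge\ \min\bigl\{(f^n_\theta)'(\varphi^-(\theta)),\ (f^n_\theta)'(\varphi^+(\theta))\bigr\}\ .
\]
Since $f^n_\theta$ maps $I_\theta$ onto $I_{T^n\theta}\subseteq K$, the mean value theorem forces the right-hand side to be $\le|K|/|I_\theta|$; feeding this into the Birkhoff ergodic theorem applied to $\theta\mapsto\log f'_\theta(\varphi^\pm(\theta))$ yields $\min\{\lambda^-(\nu),\lambda^+(\nu)\}\le0$ whenever $\varphi^-<\varphi^+$ $\nu$-a.s. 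The decisive structural input is a cocycle analogue of Singer's theorem: \emph{no $f$-invariant graph $\psi$ with $\varphi^-<\psi<\varphi^+$ $\nu$-a.s. can have $\lambda(\psi,\nu)<0$}. Indeed such a $\psi$ would possess an immediate basin forming an $f$-invariant tube whose fibres are bounded away from $\varphi^\pm$, and pulling this tube back while controlling distortion by the negative Schwarzian (Koebe-type estimates) contradicts the fact that $\varphi^\pm$ are pullback limits of $\partial K$. From this I expect to deduce: (a) at most one invariant graph lies strictly between $\varphi^-$ and $\varphi^+$ $\nu$-a.s. — two of them would be mutually repelling and would trap orbits between them with no attractor available; (b) if such an intermediate graph $\psi$ exists then $\lambda(\psi,\nu)\ge0$ and $\lambda^\pm(\nu)<0$; and (c) an extreme graph that is repelling on average contradicts its attracting of points from outside $I_\theta$ (the stable/unstable dichotomy along an invariant graph for a cocycle of monotone $C^3$ maps, using equicontinuity of the $f'_\theta$), so $\lambda^\pm(\nu)\le0$ always. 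For the third and fourth stages I would, following \cite{Otani2015}, let $\varphi^{*-}(\theta)$ (resp. $\varphi^{*+}(\theta)$) be the lower boundary of the basin of $\varphi^+$ (resp. the upper boundary of the basin of $\varphi^-$) inside $I_\theta$; monotonicity makes these measurable and covariant, and the no-intermediate-attractor lemma shows $\varphi^{*+}\le\varphi^{*-}$ off a $\nu$-null set and that the two agree $\nu$-a.s., so one may set $\varphi^*:=\varphi^{*-}$. The trichotomy is then bookkeeping with the ingredients above: case (i) is $\varphi^-=\varphi^+$ $\nu$-a.s. (and $\lambda^\pm\le0$ by (c)); case (ii) is exactly one of the degeneracies $\varphi^*=\varphi^-$ or $\varphi^*=\varphi^+$, where the Minimum-Principle bound pins the exponent of the merged side to $0$ and forces the other negative; case (iii) is $\varphi^-<\varphi^*<\varphi^+$, where (a)–(b) make $\varphi^*$ the unique intermediate graph, hence $\nu$-a.s. invariant with $\lambda^*(\nu)>0$ and $\lambda^\pm(\nu)<0$ (strictness again from the pullback-limit characterisation). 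The first concluding bullet is immediate, since any $\nu$-a.s. invariant graph is fibrewise order-comparable with $\varphi^-,\varphi^*,\varphi^+$ and so by stage two equals one of them $\nu$-a.s.; and \eqref{eq:separation} is precisely the statement that $\varphi^*$ separates the basins of $\varphi^-$ and $\varphi^+$, which holds because there is no intermediate attractor and no fibrewise wandering interval.

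The hard part will be stage two, and within it the two places where the negative Schwarzian has to do genuine work: ruling out an intermediate \emph{attracting} invariant graph, and ruling out fibrewise \emph{wandering intervals} (the classical Guckenheimer/Koebe argument transported to the cocycle, needed for \eqref{eq:separation}). Both are non-uniform, purely measure-theoretic assertions — compactness in $\theta$ is unavailable — so the Minimum Principle and the Koebe distortion estimates must be combined with the Birkhoff and Oseledets theorems, and one must carefully track the $\nu$-null exceptional sets; in particular the possibility that $\varphi^{*-}>\varphi^{*+}$ on a null set is exactly what forces the amendment to the notion of a separating graph recorded in the footnote.
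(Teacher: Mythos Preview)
The paper contains no proof of this proposition: it is quoted as \cite[Proposition~2.21]{Otani2015} (itself based on \cite{Jager2003}) and used as a black box throughout. There is thus no ``paper's own proof'' against which to compare your attempt.

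That said, your outline is broadly faithful to the strategy of the cited sources. J\"ager's original argument and Otani's measurable extension hinge on precisely the two negative-Schwarzian ingredients you isolate: the Minimum Principle, to control the number and sign pattern of Lyapunov exponents of invariant graphs, and a cocycle version of the Singer/no-wandering-interval machinery, to exclude intermediate attracting graphs and to establish that $\varphi^*$ separates the basins. Your identification of $\varphi^{*\pm}$ as basin boundaries matches the footnote's pointer to \cite[Definition~2.16]{Otani2015}. Your diagnosis of where the difficulty lies---making Koebe-type distortion control work non-uniformly over a merely measurable ergodic base, with Birkhoff and Oseledets replacing compactness---is accurate; that is indeed where the bulk of the work in \cite{Otani2015} sits. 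The one place your sketch is thin is point~(c), the claim that $\lambda^\pm(\nu)\le 0$ always: the heuristic ``repelling on average contradicts attracting from outside $I_\theta$'' needs a genuine argument, since one-sided attraction does not by itself bound the derivative at the boundary, and this is where the negative Schwarzian must again be invoked carefully.
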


Forward Lyapunov exponents play an important role in this note. They are defined by
\begin{equation}\label{eq:lyap-def}
\lambda_{(\theta,y)}
:=
\limsup_{n\to\infty}\frac{1}{n}\log(f_\theta^n)'(y)
\end{equation}

\begin{corollary}\label{coro:exponent}
For $\nu$-a.e. $\theta\in\Theta$ and every $y\in\R$, 
\begin{equation*}
\lambda_{(\theta,y)}
=
\lim_{n\to\infty}\frac{1}{n}\log(f_\theta^n)'(y)
=
\begin{cases}
\lambda^-(\nu)&\text{ if }y<\varphi^\ast(\theta)\\
\lambda^\ast(\nu)&\text{ if }y=\varphi^\ast(\theta)\\
\lambda^+(\nu)&\text{ if }y>\varphi^\ast(\theta)\ .
\end{cases}
\end{equation*}
\end{corollary}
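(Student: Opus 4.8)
The plan is to reduce the assertion to Birkhoff's ergodic theorem applied to the three $f$-invariant ergodic measures $\nu_{\varphi^-},\nu_{\varphi^\ast},\nu_{\varphi^+}$ furnished by Proposition~\ref{prop:classification-general}, using \eqref{eq:separation} to replace a general forward orbit by the orbit of one of the invariant graphs $\varphi^-,\varphi^+$. First observe that for $\nu$-a.e.\ $\theta$ the graph $\varphi^\ast$ is $T$-invariant along the orbit of $\theta$ --- by Proposition~\ref{prop:classification-general} in case~(iii), and because $\varphi^\ast$ agrees $\nu$-a.s.\ with $\varphi^-$ or $\varphi^+$ in cases~(i),(ii) --- so that $f_\theta^n(\varphi^\ast(\theta))=\varphi^\ast(T^n\theta)$, and Birkhoff's theorem, applied to $\nu_{\varphi^\ast}$ and the (integrable) observable $(\theta,y)\mapsto\log f_\theta'(y)$, gives for $\nu$-a.e.\ $\theta$
\[
\tfrac1n\log(f_\theta^n)'(\varphi^\ast(\theta))
=\tfrac1n\sum_{k=0}^{n-1}\log f_{T^k\theta}'\bigl(\varphi^\ast(T^k\theta)\bigr)
\xrightarrow[n\to\infty]{}\int_\Theta\log f_\theta'(\varphi^\ast(\theta))\,d\nu=\lambda^\ast(\nu) .
\]
The same argument gives $\tfrac1n\log(f_\theta^n)'(\varphi^\pm(\theta))\to\lambda^\pm(\nu)$. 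This already proves the statement at $y=\varphi^\ast(\theta)$.

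Now fix $\nu$-a.e.\ $\theta$ and $y>\varphi^\ast(\theta)$; the case $y<\varphi^\ast(\theta)$ is entirely symmetric. The first step is to show $|f_\theta^n(y)-\varphi^+(T^n\theta)|\to0$: for $y\in(\varphi^\ast(\theta),\varphi^+(\theta)]$ this is exactly \eqref{eq:separation}, while for $y>\varphi^+(\theta)$ one reduces to that case by monotonicity (one step sends $y$ into the uniformly bounded range of the fibre maps, one has $f_\theta^n(y)\ge\varphi^+(T^n\theta)$ for all $n$, and the fibres $f^m_{T^{-m}\chi}(\R)$ decrease to $[\varphi^-(\chi),\varphi^+(\chi)]$). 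Since $f_\theta^n(\varphi^+(\theta))=\varphi^+(T^n\theta)$ and $\tfrac1n\log(f_\theta^n)'(\varphi^+(\theta))\to\lambda^+(\nu)$ by the first paragraph, the corollary at $y$ becomes equivalent to
\[
\Delta_n:=\tfrac1n\log\frac{(f_\theta^n)'(y)}{(f_\theta^n)'(\varphi^+(\theta))}
=\tfrac1n\sum_{k=0}^{n-1}\Bigl(\log f_{T^k\theta}'\bigl(f_\theta^k(y)\bigr)-\log f_{T^k\theta}'\bigl(\varphi^+(T^k\theta)\bigr)\Bigr)\xrightarrow[n\to\infty]{}0 ,
\]
which in particular turns the $\limsup$ in \eqref{eq:lyap-def} into a genuine limit.

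To prove $\Delta_n\to0$ I would bound $(f_\theta^n)'(y)$ from below and above by $(f_\theta^n)'(\varphi^+(\theta))$ up to subexponential factors. The lower bound uses the minimum principle for maps with negative Schwarzian derivative: each composition $f_\theta^n$ again has negative Schwarzian and no critical points, hence $(f_\theta^n)'$ attains its minimum over any interval at an endpoint. Applied to $[\varphi^\ast(\theta),\varphi^+(\theta)]$ (for $\varphi^\ast(\theta)<y\le\varphi^+(\theta)$) this gives $(f_\theta^n)'(y)\ge\min\{(f_\theta^n)'(\varphi^\ast(\theta)),(f_\theta^n)'(\varphi^+(\theta))\}=e^{n(\lambda^+(\nu)+o(1))}$, since $\lambda^\ast(\nu)\ge\lambda^+(\nu)$ in all three cases of Proposition~\ref{prop:classification-general}; the case $y>\varphi^+(\theta)$ is similar, transferring control from $\varphi^+(\theta)$ to $y$ by a mean value estimate. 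The upper bound is the technical heart: one shows that $f_\theta^n$ has subexponentially bounded distortion on a fixed compact interval $[y^-,y^+]\ni y$ with $y^->\varphi^\ast(\theta)$. Here $f_\theta^n([y^-,y^+])$ shrinks to a point --- both of its endpoints converge to $\varphi^+(T^n\theta)$ --- and the strong expansion of $f_\theta^n$ near $\varphi^\ast$ (quantified by $\lambda^\ast(\nu)>0$ in case~(iii), resp.\ the identity $\varphi^\ast=\varphi^-$ in cases~(i),(ii)) produces a definite amount of space to the left of $f_\theta^n([y^-,y^+])$, so that a Koebe-type distortion estimate for negative-Schwarzian maps yields $(f_\theta^n)'(y)\le K_n\,(f_\theta^n)'(\varphi^+(\theta))$ with $\tfrac1n\log K_n\to0$. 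Combining the two bounds gives $\Delta_n\to0$.

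The main difficulty is precisely this distortion bound: $\varphi^+$ is a boundary point of the attractor fibre, so there is no Koebe space on that side inside $\GA$, and one must either use points just outside the attractor fibre (so that the convergence $f_\theta^n(y)\to\varphi^+(T^n\theta)$ for $y>\varphi^+(\theta)$, already needed above, does double duty) or else replace Koebe by a quantitative summation of the one-step distortions, which requires the \emph{exponential} rate $|f_\theta^n(y)-\varphi^+(T^n\theta)|=e^{n(\lambda^+(\nu)+o(1))}$ and hence only applies when $\lambda^+(\nu)<0$, the borderline case $\lambda^+(\nu)=0$ --- which can occur only if $\varphi^+=\varphi^\ast$ --- being treated separately. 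On the other hand, if the standing regularity assumptions on the fibre maps ensure that $\log f_\theta'$ is uniformly Lipschitz on the compact region containing $\GA$, then $\Delta_n\to0$ follows immediately from $|f_\theta^k(y)-\varphi^+(T^k\theta)|\to0$ by passing to Cesàro means, and the negative Schwarzian derivative is not needed for this particular statement.
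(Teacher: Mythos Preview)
Your final paragraph is the right idea and is essentially what the paper does: once you know the orbit of $y$ shadows that of $\varphi^+$, the continuity of $(\theta,y)\mapsto\log f_\theta'(y)$ on a compact set (no Lipschitz needed) plus Ces\`aro averaging gives $\Delta_n\to0$. The long detour through the minimum principle and Koebe distortion is unnecessary; the paper uses neither and does not invoke $\cS f_\theta<0$ for this corollary at all.

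There is, however, a genuine gap in your reduction for $y>\varphi^+(\theta)$. You assert pointwise convergence $|f_\theta^n(y)-\varphi^+(T^n\theta)|\to0$, arguing that the fibres $f^m_{T^{-m}\chi}(\R)$ decrease to $[\varphi^-(\chi),\varphi^+(\chi)]$. But this shrinking is pointwise in $\chi$, while the point $\chi=T^n\theta$ at which you need it is moving: concretely, $f_\theta^n(y)\le\psi_n^+(T^n\theta)$ with $\psi_n^+\searrow\varphi^+$ pointwise, and $\psi_n^+(T^n\theta)-\varphi^+(T^n\theta)\to0$ does not follow from that alone (the orbit could repeatedly visit regions where the convergence $\psi_n^+\to\varphi^+$ is slow). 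The paper sidesteps this by not claiming pointwise convergence at all: it proves directly that the Ces\`aro averages vanish, via Birkhoff's theorem applied to the bounded observable $\sup_{j\ge N}(\psi_j^+-\varphi^+)$ for fixed $N$, followed by dominated convergence as $N\to\infty$. That Ces\`aro statement is already enough for the uniform-continuity argument you outline at the end. So your overall plan is correct once you replace the unjustified pointwise claim by this ergodic-theorem trick for the exterior case.
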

\noindent The \emph{proof} is deferred to the appendix.

If $T:\Theta\to\Theta$ is a homeomorphism of a compact metric space, then $\varphi^-$ is upper semi-continuous, $\varphi^+$ is lower semi-continuous by construction, and the global attractor $\GA$ is compact.
The set $P\subseteq\Theta$ of points, at which $\varphi^-$ and $\varphi^+$ coincide, is a $G_\delta$-set. It is always dense, if it is non-empty and if the base system is minimal, e.g. if it is an irrational rotation (see Subsection~\ref{subsec:qpf} for more on this).

\subsection{The contribution of this paper}
Here we consider the case of a hyperbolic base transformation $T$, where the presence of strong stable fibres for the map $f$ serves to restrict the possibilities how the set $P$ and the graph of $\varphi^*$ can look like in the three cases of Proposition~\ref{prop:classification-general}. 
In order to concentrate on the interesting phenomena and keep the technicalities at a minimum, we will only consider (generalized) baker transformations at the base\footnote{Of course they are not continuous in a strict sense, but their discontinuities are mild enough that $\varphi^-$ and $\varphi^+$ are still semi-continuous under our assumptions.}. 

A consequence of the main structure results for the set $P$ (Theorems~\ref{theo:top-class} - \ref{theo:B-class}) is the applicability of thermodynamic formalism to derive a formula for the Hausdorff dimension of $P$ in Theorem~\ref{theo:dimension}, using results from \cite{Otani2015}.

\subsection{Relations to weak generalized synchronization}
The set $P$ consists of those base points, over which the system synchronizes trajectories with different initial values on the same fibre. This phenomenon can be understood as a variant of \emph{weak generalized synchronization} 
\cite{Pyragas1996,Hunt98,Singh2008,KJR2012}. In the terminology of those references, 
$\phi^+|_P=\phi^-|_P$ is the (partially defined) response function coupling the dynamics on the fibres to the chaotic base.
The results of this paper elucidate the highly complex fine structure of this response mechanism.

\subsection{Relations to Weierstrass graphs} If $T:[0,1)^2\to[0,1)^2$ is the baker map and if instead of branches with negative Schwarzian derivative one studies affine branches $f_\theta(y)=\lambda y+\cos(2\pi x)$ with $\lambda\in(1/2,1)$ (where $\theta=(\xi,x)$),
it turns out that $\varphi^+=\varphi^-$ everywhere and that this is the graph of a classical Weierstrass function - a Hölder-continuous but nowhere differentiable function. These functions and their generalizations have received much attention during the last decades, culminating in the recent result that the graph of this function has Hausdorff dimension $2+\frac{\log\lambda}{\log 2}$ for all $\lambda\in(1/2,1)$, i.e. in the range of parameters for which the skew product system $f$ is partially hyperbolic \cite{BBR2013,Shen2015}.

Related, but less complete results were also obtained for for generalisations where the cosine function is replaced by other suitable functions, where the baker map is replaced by some of its nonlinear variants, and/or where the slope $\lambda$ of the contracting branches is also allowed to depend on $x$, see e.g. \cite{Urbanski89,Ledrappier92,Otani2015}. In all these cases, however, the fibre maps remain affine. Other results deal with the local Hölder exponents of these graphs \cite{Bedford89, Bedford89h}.

In this note we replace the affine branches from the Weierstrass case by negative Schwarzian branches in a way that allows simultaneously contraction and expansion in the fibres. Our primary goal is to understand the interplay between topological and measure theoretic properties of the graphs of $\varphi^-,\varphi^*$ and $\varphi^+$. In the end it turns out that this is the key to a ``thermodynamic formula'' for the Hausdorff dimension  of set of those base points over which the global attractor is pinched, i.e. consists of just one point.

\subsection{Relations to quasiperiodically forced systems}
\label{subsec:qpf}
Let $T:\T\to\T$ be an irrational circle rotation. Such quasiperiodically forced systems were studied by Jäger in \cite{Jager2003,Jager2009}. He gives the following result \cite[Theorem 4.2 and Corollary 4.3]{Jager2003} under the aditional assumption that $(\theta,y)\mapsto f_\theta'(y)$ is continuous:
There are three possible cases:
\begin{compactenum}[(i)]
\item There exists only one invariant graph $\varphi$, and $\lambda(\varphi)\leqslant0$.
If $\lambda(\varphi)<0$, then $\varphi$ is continuous. Otherwise its Lebesgue equivalence class contains at least an upper and a lower semi-continuous representative.
\item There exist two invariant graphs $\varphi$ and ${\varphi^\ast}$ with $\lambda(\varphi)<0$ and $\lambda({\varphi^\ast})=0$.
The upper invariant graph is upper semi-continuous, the lower invariant graph lower semi-continuous. If $\varphi$ is not continuous and ${\varphi^\ast}$ (as an equivalence class) is only semi-continuous in one direction, then 
$\supp(m_\varphi)=\supp(m_{\varphi^\ast})$, i.e. the Lebesgue-essential closures of the two graphs coincide.
\item There exist three invariant graphs $\varphi^-\leqslant{\varphi^\ast}\leqslant\varphi^+$ with $\lambda(\varphi^\pm)<0$ and
$\lambda({\varphi^\ast})>0$.
${\varphi^\ast}$ is continuous if and only if $\varphi^+$ and $\varphi^-$ are continuous. Otherwise $\varphi^-$ is at least lower semi-continuous and $\varphi^+$ is at least upper semi-continuous. If ${\varphi^\ast}$ is neither upper nor lower semi-continuous, then 
$\supp(m_{\varphi^-})=\supp(m_{\varphi^\ast})=\supp(m_{\varphi^+})$.
\end{compactenum}
All three cases occur. For case~(iii) this was proved in \cite{Jager2009}.

In this note we replace the quasiperiodic base by a chaotic one, namely by a baker transformation, and we prove an analoguous but rather different classification of the types of invariant graphs.

\section{Negative Schwarzian branches driven by a baker map: Model and results}
\label{sec:results}

\subsection{Preliminaries}

In this note we concentrate on the simplest hyperbolic case where $T:\T^2\to\T^2$ is a  (generalized) baker map with discontinuity at $a\in(0,1)$, i.e.
\begin{equation*}
T(\xi,x)
=
\begin{cases}
(\tau(\xi),ax)&\text{ if }\xi\in[0,a)\\
(\tau(\xi),a+(1-a)x)&\text{ if }\xi\in[a,1)
\end{cases}
\quad\text{with}\quad
\tau(\xi)
=
\begin{cases}
a^{-1}\xi&\text{ if }\xi\in[0,a)\\
(1-a)^{-1}(\xi-a)&\text{ if }\xi\in[a,1)
\end{cases}.
\end{equation*}
This map is bijective but not continuous. Nevertheless the pull-back construction mentioned in Proposition~\ref{prop:classification-general} yields semi-continuous $\varphi^-$ and $\varphi^+$ in some cases of interest. We make the following general assumption:

\begin{hypothesis}\label{hyp:special-ass}
$\Theta=\T^2$, $T:\T^2\to\T^2$ is a generalized baker map,
$f_\theta$ depends on $\theta=(\xi,x)$ only through $x$, so that we can write $f_\theta=f_x$,
and $x\mapsto f_x$ is continuous on $\T $, where $\T$ is the one-dimensional torus.
Recall also that $f_x'>0$ and $\cS f_x<0$.
\end{hypothesis}

\begin{remark}
The assumptions that the fibre maps depend on $\theta$ only through $x$ is certainly a severe restriction, because it prevents a direct extension of the present results to models where $T$ is a hyperbolic diffeomorphism of $\T^2$. In the much simpler case of multiplicatively forced concave fibre maps with a common fixed point, this problem could be overcome in the following way \cite[Remark 4 and Proposition 1.1]{Keller2014}: For baker maps at the base, a multiplicative driving function which depends Hölder-continuously on $\theta$ is cohomologous to one that depends only on $x$. Although the invariant graphs of the systems defined by these two driving functions are not identical, they can be sufficiently well compared to transfer knowledge about one of them to the other one. 
In \cite[Example 2 and Theorem 2.6]{Keller2014} it is discussed how this idea can  be applied when the base map is an Anosov surface diffeomorphism.

For the more general fibre maps studied here one might try to compare a system with fibre maps depending on $\theta$ to one with fibre maps depending only on $x$, and to transfer knowledge
(in the sense of Theorems~\ref{theo:top-class} -~\ref{theo:dimension} below) about the latter system to the original one.
\end{remark}

\begin{lemma}	\label{lem:phi_depend_x}
If Hypothesis~\ref{hyp:special-ass} is satisfied,
then $\varphi^-$ and $\varphi^+$ are semi-continuous functions from $\T^2$ to $\R$ that depend on $\theta=(\xi,x)\in\T^2$ only through $x$, i.e. $\varphi^\pm(\xi,x)=\phi^\pm(x)$.\end{lemma}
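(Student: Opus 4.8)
The plan is to exploit the definition of $\varphi^\pm$ as pullback limits together with the fact that the baker map $T$ contracts the $x$-coordinate on each of the two branches, so that after many iterations the $x$-coordinate of a preimage is essentially determined by the full itinerary while the $\xi$-coordinate becomes irrelevant. Concretely, recall that $\varphi^+(\theta)=\lim_{n\to\infty}f^n_{T^{-n}\theta}(\varphi^+_0)$ for a suitable constant upper bound $\varphi^+_0$ (and symmetrically for $\varphi^-$ with a lower bound), where $f^n_{T^{-n}\theta}=f_{\theta_{n-1}}\circ\cdots\circ f_{\theta_0}$ and $\theta_j$ denotes the $x$-coordinate of $T^{-n+j}\theta$. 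The point is that these $x$-coordinates of the backward orbit of $\theta=(\xi,x)$ depend only on $x$ and the backward branch-itinerary, not on $\xi$; and more importantly, if $(\xi,x)$ and $(\xi',x)$ share the same $x$-coordinate, then $T^{-n}(\xi,x)$ and $T^{-n}(\xi',x)$ have \emph{the same} $x$-coordinate for every $n\ge 0$, because $T^{-1}$ acts on the $x$-coordinate by the expanding map $\tau$ only — wait, more precisely one must track which branch of $T^{-1}$ is taken, and that choice is governed by the $\xi$-coordinate.

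So the first step is to make the preceding precise: write down $T^{-1}$ explicitly and observe that the two branches of $T^{-1}$ are determined by whether the $x$-coordinate lies in $[0,a)$ or $[a,1)$, and that on each branch the new $x$-coordinate is $\tau^{-1}$ applied to the old one (i.e. a contraction by $a$ or $1-a$) while the new $\xi$-coordinate is $a^{-1}\xi$ or $(1-a)^{-1}(\xi-a)$ restricted appropriately — actually I should double-check the inverse, but the key structural fact is that \emph{the sequence of $x$-coordinates along the backward orbit of $(\xi,x)$ depends only on $x$}, since $T^{-1}$ sends $x$ to $ax$ or $a+(1-a)x$ depending solely on the current $x$. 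Hence for two base points with the same $x$-coordinate, the sequences $(\theta_j)_{j=0}^{n-1}$ entering the composition $f_{\theta_{n-1}}\circ\cdots\circ f_{\theta_0}$ are identical, so the pullback approximants agree exactly for every $n$, and therefore the limits $\varphi^\pm(\xi,x)$ and $\varphi^\pm(\xi',x)$ coincide. This gives the factorization $\varphi^\pm(\xi,x)=\phi^\pm(x)$.

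The second step is semi-continuity, which is essentially inherited from Proposition~\ref{prop:classification-general} / the general construction: the pullback limits $\varphi^-$ and $\varphi^+$ are monotone limits (decreasing resp. increasing in $n$ when started from the extreme constants), and each approximant $\theta\mapsto f_{\theta_{n-1}}\circ\cdots\circ f_{\theta_0}(\varphi^\pm_0)$ is, as a function of $x$, continuous on each of finitely many half-open intervals determined by the first $n$ symbols of the backward $x$-itinerary (using Hypothesis~\ref{hyp:special-ass}, $x\mapsto f_x$ is continuous, and each $f_x$ is continuous and monotone). A decreasing limit of such functions is upper semi-continuous and an increasing limit is lower semi-continuous; one must be a little careful at the countably many dyadic-type discontinuity points of $\tau$ and its iterates, but the one-sided continuity on each branch, combined with monotonicity of the limit, still delivers upper semi-continuity of $\varphi^-$ and lower semi-continuity of $\varphi^+$ on all of $\T^2$ (this is exactly the point flagged in the footnote about the baker map's discontinuities being ``mild enough''). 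Since we already know $\varphi^\pm$ factor through $x$, semi-continuity as functions of $(\xi,x)$ is equivalent to semi-continuity of $\phi^\pm$ as functions of $x$.

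The main obstacle I expect is the careful bookkeeping at the discontinuity set of $T$: one has to verify that passing to the limit does not destroy semi-continuity at the (countably many) points where the backward itinerary map is discontinuous, and that the ``same $x$-coordinate implies same backward $x$-itinerary'' claim is genuinely unaffected by the branch ambiguity carried in the $\xi$-coordinate — i.e., that the branch of $T^{-1}$ actually taken is dictated by $x$ alone and not by $\xi$. Everything else (the factorization, the monotone-limit argument for semi-continuity) is routine once that structural observation about $T^{-1}$ is nailed down, and it is precisely why Hypothesis~\ref{hyp:special-ass} was imposed.
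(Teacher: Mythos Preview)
Your approach is the paper's, and your first instinct in the opening paragraph was correct: $T^{-1}$ acts on the $x$-coordinate by the \emph{expanding} map $\tau$, with the branch selected by $x$ alone. Explicitly, $T^{-1}(\xi,x)=(a\xi,\tau(x))$ for $x\in[0,a)$ and $T^{-1}(\xi,x)=(a+(1-a)\xi,\tau(x))$ for $x\in[a,1)$, so $\pi_2(T^{-j}(\xi,x))=\tau^j(x)$ for all $j\ge0$. The formulas you wrote in the second paragraph ($x\mapsto ax$ or $a+(1-a)x$, a contraction) describe the \emph{forward} action of $T$ on $x$, whose branch is selected by $\xi$; you have swapped forward and backward. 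Your conclusion that the backward $x$-itinerary depends only on $x$ survives the slip, but the supporting computation should be fixed.

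The main simplification you miss, and the reason the paper's proof is three lines, is that $\tau:\T\to\T$ is \emph{continuous} as a circle map: the apparent jump at $x=a$ sends $a^-$ to $1$ and $a$ to $0$, which coincide in $\T$. Combined with the hypothesis that $x\mapsto f_x$ is continuous on $\T$, each pullback approximant $\psi_k^+(\xi,x)=f_{\tau x}\circ f_{\tau^2 x}\circ\cdots\circ f_{\tau^k x}(M)$ is genuinely continuous on $\T^2$, not merely piecewise continuous on half-open intervals. Then $\varphi^+=\inf_k\psi_k^+$ is upper semi-continuous immediately (and symmetrically for $\varphi^-$). All the worry about ``dyadic-type discontinuity points'' and one-sided limits is unnecessary.
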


\begin{proof}
Here is the proof for $\varphi^+$: Fix $M>0$ such that $f_\theta(y)\leqslant M$ for all $(\theta,y)\in\T^2\times\R$, and define
$\psi_k^+(\theta)=f_{T^{-k}\theta}^k(M)$. Then
$M\geqslant\psi_1^+\geqslant\psi_2^+\searrow\varphi^+$ pointwise, so it suffices to prove that all $\psi_k^+:\T^2\to\R$ are continuous.
But $\psi_k^+(\xi,x)=f^k_{\tau^kx}(M)$, and $\tau^k:\T \to\T $ is a continuous map of the $1$-torus. 
\end{proof}
In addition, we need a uniform partial hyperbolicity assumption.
\begin{hypothesis}\label{hyp:weak-contraction}
There are closed intervals $ I\subseteq J \subseteq \R $ such that $ f ( \T ^2 \times J ) \subseteq \T ^2 \times I ^\circ $, $I\subseteq J^\circ$, and
\[
	\inf _{ ( x , y) \in \T   \times J } \tau ' ( x ) \cdot f _{\tau ( x )} ' ( y ) > 1 .
\]

\end{hypothesis}

\begin{example}	\label{ex:main_example}
Let $ \tau (x) = 2 x \mod 1 $, $	f _x(y) = 	\arctan ( r y ) +  \varepsilon \cos ( 2 \pi x ) $ and $ J = [ - M , M ]$, where $ r = 1.1 $ and $ \varepsilon \in [ 0 , 0.1] $.
Then we can choose $M=0.86$ and $I=[-0.858,0.858]$.

More generally, for given $\varepsilon$
denote by $U_\varepsilon$ the set of parameters $(M,r)$, for which
Hypothesis~\ref{hyp:weak-contraction} is satisfied.
Then it is not hard to show that, for $|\varepsilon|\leqslant0.21$,  the parameter $(1,1)$ belongs to the closure of $U_\varepsilon$, and
for $|\varepsilon|\leqslant0.1$, $U_\varepsilon$ contains the triangle
determined by the inequalities $1\leqslant r\leqslant M+0.24$ and $M\leqslant0.98$.
\end{example}

\begin{equation}
	D f _{(\xi, x, y)} = \left( \begin{matrix}
		\tau ' (\xi)	& 0 & 0 \\
		0 &  \sigma(\xi)  & 0 \\
		0 & \frac{\partial f_x}{\partial x}(y) &  f _x ' (y) 
	\end{matrix} \right) ,	\label{eq:DF}
\end{equation}
where 
$\sigma(\xi)=a$ if $\xi\in[0,a)$ and $\sigma(\xi)=1-a$ if $\xi\in [a,1)$.
(In fact, $\sigma(\xi)=1/\tau'(\xi)$.)       

By Hypothesis~\ref{hyp:weak-contraction}, the second diagonal entry is the eigenvalue  responsible for the strong stable directions in the skew product system.
In order to guarantee the existence of sufficiently regular strong stable fibres, we impose the following regularity conditions:

\begin{hypothesis}\label{hyp:regularity}
The map $ \T \times J \rightarrow \R $, $(x,y)\mapsto f_x(y)$, is of class $C^2$.   
\end{hypothesis}

Along classical lines one proves the following lemma (see the appendix):
\begin{lemma}\label{lemma:shortX3}
Under Hypothesises \ref{hyp:special-ass}, \ref{hyp:weak-contraction} and \ref{hyp:regularity}, there exists a bounded $Df$-equivariant strong stable subbundle $X$ of the tangent space to $\T^2\times J$. It satisfies
$Df\cdot X=\sigma\cdot(X\circ f)$, and for each $\xi\in\T$, the map $(x,y)\mapsto X(\xi,x,y)$ is continuous on $\T\times J$.
(Indeed, $X_1=0$, $X_2=1$ and all information is carried by $X_3:\T^2\times J\to\R$, which is given explicitly in (\ref{eq:X_3}).) Moreover,
\begin{equation}\label{eq:X3deriv-y-bound}
 	\sup _{(\xi, x, y) \in \T ^2 \times J } \left| \frac{\partial X _3}{\partial y} (\xi, x, y) \right| < \infty 
\end{equation}
and
\begin{equation}\label{eq:X3deriv-x-bound}
 	\sup _{(\xi, x, y) \in \T ^2 \times J } \left| \frac{\partial X _3}{\partial x} (\xi, x, y) \right| < \infty\ .
\end{equation}
\end{lemma}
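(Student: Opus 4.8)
The plan is to produce $X$ in the normalised form $X=(0,1,X_3)$ with $X_3:\T^2\times J\to\R$ to be found. This ansatz is forced by the block structure~\eqref{eq:DF}: the $\xi$-direction is uniformly expanded, so a strong stable vector must have vanishing first component, and in the lower-triangular $2\times 2$ block with diagonal $(\sigma(\xi),f_x'(y))$ the contracting eigenvalue $\sigma$ has an eigendirection transverse to the $y$-axis, so its second component can be normalised to $1$. With this ansatz the equivariance relation $Df\cdot X=\sigma\cdot(X\circ f)$ reduces to the scalar functional equation
\[
  f_x'(y)\,X_3(\xi,x,y)=\sigma(\xi)\,X_3\bigl(f(\xi,x,y)\bigr)-\frac{\partial f_x}{\partial x}(y)\ .
\]
Writing $(\xi_n,x_n,y_n):=f^n(\xi,x,y)$ and iterating this relation forward — which is legitimate because $f(\T^2\times J)\subseteq\T^2\times I^\circ\subseteq\T^2\times J$, so forward orbits never leave the compact set $\T^2\times J$ — yields the candidate formula~\eqref{eq:X_3},
\[
  X_3(\xi,x,y)=-\sum_{n=0}^{\infty}\left(\prod_{k=0}^{n-1}\frac{\sigma(\xi_k)}{f_{x_k}'(y_k)}\right)\frac{1}{f_{x_n}'(y_n)}\,\frac{\partial f_{x_n}}{\partial x}(y_n)\ .
\]
The decisive estimate is the domination supplied by Hypothesis~\ref{hyp:weak-contraction}: by the baker symmetry one has $x_k=\tau(x_{k+1})$ and hence $\sigma(\xi_k)=1/\tau'(x_{k+1})$, so that $\sigma(\xi_k)/f_{x_k}'(y_k)=\bigl(\tau'(x_{k+1})\,f_{\tau(x_{k+1})}'(y_k)\bigr)^{-1}\le\rho^{-1}<1$ uniformly along every orbit in $\T^2\times J$, where $\rho:=\inf_{(x,y)\in\T\times J}\tau'(x)\,f_{\tau(x)}'(y)>1$.

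By Hypothesis~\ref{hyp:regularity}, $(x,y)\mapsto f_x(y)$ is $C^2$ on the compact set $\T\times J$, so there are constants $0<c_0\le\Lambda$ and $C<\infty$ with $c_0\le f_x'(y)\le\Lambda$ and with $|\partial f_x/\partial x|$, $|\partial^2 f_x/\partial x^2|$, $|\partial^2 f_x/\partial x\partial y|$ and $|f_x''|$ all $\le C$ on $\T\times J$. Combined with the domination above, the $n$-th summand of~\eqref{eq:X_3} is bounded in absolute value by $C\,c_0^{-1}\rho^{-n}$; hence the series converges absolutely and uniformly and defines a bounded function $X_3$. The resulting $X=(0,1,X_3)$ satisfies $Df\cdot X=\sigma\cdot(X\circ f)$ by construction and is bounded, and it is genuinely strong stable: along an orbit it is contracted at rate $\prod_{k<n}\sigma(\xi_k)\le\max\{a,1-a\}^{\,n}$, strictly faster than the complementary invariant direction in the $(x,y)$-plane, whose rate $\prod_{k<n}f_{x_k}'(y_k)\ge\rho^{\,n}\prod_{k<n}\sigma(\xi_k)$ dominates it, and of course faster than the expanding $\xi$-direction.

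Continuity of $(x,y)\mapsto X_3(\xi,x,y)$ for fixed $\xi\in\T$ then follows by uniform convergence: once $\xi$ is fixed, its itinerary under $\tau$ is determined, hence so are the scalars $\sigma(\xi_k)$ and the affine maps $x\mapsto x_n$, while $(x,y)\mapsto(x_n,y_n)$ is continuous because $(x,y)\mapsto f_x(y)$ is; thus each summand of~\eqref{eq:X_3} is continuous and the sum inherits continuity on $\T\times J$.

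It remains to prove the derivative bounds~\eqref{eq:X3deriv-y-bound} and~\eqref{eq:X3deriv-x-bound}, which I would obtain by differentiating~\eqref{eq:X_3} term by term. Here $\partial_y y_n=\prod_{k<n}f_{x_k}'(y_k)$ and $\partial_x x_n=\prod_{k<n}\sigma(\xi_k)$, while $\partial_x y_n$ satisfies the linear recursion $\partial_x y_{n+1}=\frac{\partial f_{x_n}}{\partial x}(y_n)\,\partial_x x_n+f_{x_n}'(y_n)\,\partial_x y_n$. Differentiating a summand in $y$ or in $x$ produces, via the logarithmic-derivative rule, polynomially many terms, each of which contains the weight $\prod_{k<n}\bigl(\sigma(\xi_k)/f_{x_k}'(y_k)\bigr)$ multiplied by a partial product of the factors $f_{x_\bullet}'(y_\bullet)$ coming from $\partial_\bullet y_\bullet$ and a partial product of the factors $\sigma(\xi_\bullet)$ coming from $\partial_x x_\bullet$. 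The growing partial products $\prod f'$ cancel exactly against the $\prod(1/f')$ already present in the weight, so that every index $k\le n-1$ contributes either a paired factor $\sigma(\xi_k)/f_{x_k}'(y_k)\le\rho^{-1}$ or a bare $\sigma(\xi_k)\le\max\{a,1-a\}$, and only a bounded number of stray factors $1/f'\le 1/c_0$ survive. Hence each such term is bounded by $C'\gamma^{\,n}$ with $\gamma:=\max\{\rho^{-1},a,1-a\}<1$, the differentiated series is dominated by the convergent series $\sum_n C' n^2\gamma^{\,n}$, and~\eqref{eq:X3deriv-y-bound}--\eqref{eq:X3deriv-x-bound} follow. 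This bookkeeping — tracking which growing factor is cancelled by which weight — is the only genuinely laborious point and is the main obstacle; conceptually it is nothing more than the classical $C^1$-section estimate for a dominated splitting, with Hypotheses~\ref{hyp:weak-contraction} and~\ref{hyp:regularity} supplying precisely the domination and the $C^2$-regularity that it requires.
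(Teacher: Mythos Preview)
Your proof is correct and follows essentially the same approach as the paper: both define $X_3$ by the explicit series~\eqref{eq:X_3}, verify boundedness and equivariance via the domination $\|\Gamma\|<1$ from Hypothesis~\ref{hyp:weak-contraction}, and establish the derivative bounds by differentiating the series term by term and exploiting the cancellation $\Gamma^j\cdot (f^j_{(\xi,x)})'(y)=\sigma^j$ between the weight and the chain-rule factors. The paper carries out the $\partial_y$ estimate explicitly and leaves $\partial_x$ to the reader, whereas you give a uniform schematic description of both; your additional motivation for the ansatz via the block structure of $Df$ is a nice touch not present in the paper.
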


We often will identify points $ x \in \T = \R / \Z $ with points $ x \in \I:=[0,1] $.
In this case we also identify $ (\xi, x) \in \T ^2 $ with $ ( \xi , x ) \in \I^2 $.
When we consider strong stable fibres, that we are going to introduce now, the space $ \T $ for $ x $ is always considered as $\I$, so that effectively we are looking at particular local strong stable fibres.

\begin{definition}	\label{def:l_ss}
Let $X_3:\T^2\times J\to\R$  be the function from Lemma \ref{lemma:shortX3}.
Given $ (\xi, x, y) \in \I^2 \times J$, we define the strong stable fibre 
through this point
as the unique maximal solution $ \ellss_{(\xi,x,y)} : \dom _{(\xi, x, y)} \rightarrow J $ of the initial value problem
\begin{equation}
\left\{
\begin{array}{rcl}
	\ellss _{(\xi, x, y)} (x) & = & y  \\
	(\ellss _{(\xi, x, y)} ) ' {(u)}	&  =&	X _3 \left( \xi, u,  \ellss_{(\xi,x, y)} (u) \right) 
\end{array} 
\right.	\label{eq:IVP}
\end{equation}
for $ u \in \dom _{(\xi, x, y)} $, where $ \dom _{(\xi, x, y)} \subseteq \I $ is the maximal interval around $ x \in \I $ for which the unique solution exists. (It will turn out that $\dom_{(\xi,x,y)}=\I$
for most ``interesting'' points $(\xi,x,y)$.)
\end{definition}
\noindent Observe that (\ref{eq:X3deriv-y-bound}) allows to apply the Picard-Lindelöf theorem to (\ref{eq:IVP}), 
and together with \eqref{eq:X3deriv-x-bound} it also implies that all strong stable fibres are of class~$\CLip$.

\subsection{The classification}

From now on, we always assume Hypothesis \ref{hyp:special-ass},  \ref{hyp:weak-contraction} and \ref{hyp:regularity}.

\begin{definition}
Denote by $\cG\varphi^\pm$ the graphs of $\varphi^+,\varphi^-:\T^2\to I$, by $\overline{\cG\varphi^\pm}$ their closures in $\T^2\times I$, and by
$\widetilde{\cG\varphi^\pm}$ their filled-in closures in $\T^2\times I$, i.e.
\[
	\widetilde{\cG\varphi^\pm} = \left\{ (\theta , y )  \in \T^2 \times I : \exists y_1, y_2 \in \overline{\cG\varphi ^\pm} \mbox{ s.t. } y_1 \leqslant y \leqslant y _2 \right\} .
\]
Let $P:=\{\theta\in\T^2: \varphi^+(\theta)=\varphi^-(\theta)\}$ 
be the set of \emph{pinched points} of the system, i.e. the set of points $\theta=(\xi,x)$ where $\varphi^-(\theta)=\varphi^+(\theta)$,   
and
denote by $C^\pm\subseteq\T^2$ the set of continuity points of $\varphi^\pm$. (As $\varphi^+$ and $\varphi^-$ are semi-continuous, they are both Baire 1 functions, and hence $C^+$, $C^-$ and also their intersection are dense $G_\delta$-sets.)
\end{definition}

\begin{theorem}[Topological classification]\label{theo:top-class}\quad\\
The sets $\overline{\cG\varphi^\pm}$ and $\widetilde{\cG\varphi^\pm}$ are forward $f$-invariant.
There are only two possibilities:
\begin{enumerate}[A)]
\item $\widetilde{\cG\varphi^+}\cap\widetilde{\cG\varphi^-}=\emptyset$, in particular $P=\emptyset$, or
\item $P=C^+\cap C^-$, and this is a dense $G_\delta$-subset of $\T^2$.
Let $\cP:=\overline{\cG\varphi^\pm|_P}$. Then $\cP\subseteq
\overline{\cG\varphi^+}\cap\overline{\cG\varphi^-}$, $f(\cP)\subseteq\cP$, and $\pi(\cP)=\T^2$,
where $\pi:\T^2\times \R\to\T^2$ is the canonical projection.
\end{enumerate}
If there is any $\nu\in\cE_T(\T^2)$ with $\lambda^*(\nu)<0$, then case~A is excluded.
\end{theorem}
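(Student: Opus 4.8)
The plan is to read the conclusion off directly from the trichotomy of Proposition~\ref{prop:classification-general}: the hypothesis $\lambda^\ast(\nu)<0$ is incompatible with its cases~(ii) and~(iii), so $\nu$ must fall into case~(i), and that case immediately forces $P$ to be nonempty, which is precisely what rules out case~A.

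First I would examine, for the given ergodic measure $\nu\in\cE_T(\T^2)$, which of the three alternatives of Proposition~\ref{prop:classification-general} holds. In its case~(ii) the graph $\varphi^\ast$ agrees $\nu$-almost surely with $\varphi^-$ or with $\varphi^+$; since $\lambda^\ast(\nu)=\int_{\T^2}\log f_\theta'(\varphi^\ast(\theta))\,d\nu(\theta)$ depends only on the $\nu$-equivalence class of $\varphi^\ast$, this yields $\lambda^\ast(\nu)=\lambda^-(\nu)=0$ in the first subcase and $\lambda^\ast(\nu)=\lambda^+(\nu)=0$ in the second. In its case~(iii) the proposition asserts outright that $\lambda^\ast(\nu)>0$. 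Hence $\lambda^\ast(\nu)<0$ can only occur in case~(i), in which $\varphi^-=\varphi^\ast=\varphi^+$ holds $\nu$-almost surely.

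I would then conclude as follows. The set $P=\{\theta\in\T^2:\varphi^-(\theta)=\varphi^+(\theta)\}$ is measurable, since $\varphi^\pm$ are measurable, and by the previous step it has full $\nu$-measure; as $\nu$ is a probability measure this forces $P\neq\emptyset$ (indeed $\nu(P)=1$). But case~A of the theorem entails $P=\emptyset$, so case~A is impossible, and by the dichotomy already established the system is necessarily in case~B.

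I do not expect a genuine obstacle here: the only point requiring a moment's care is the identity $\lambda^\ast(\nu)=\lambda^-(\nu)$ (resp.\ $=\lambda^+(\nu)$) in case~(ii), which is immediate once one observes that $\lambda^\ast(\nu)$ is defined by an integral and $\varphi^\ast$ coincides $\nu$-a.s.\ with $\varphi^-$ or $\varphi^+$ there; if one prefers, Corollary~\ref{coro:exponent} supplies the same information. Everything else is a direct reading of Proposition~\ref{prop:classification-general} together with the definition of case~A.
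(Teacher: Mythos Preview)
Your argument for the final assertion (that $\lambda^\ast(\nu)<0$ excludes case~A) is correct and is exactly the paper's approach: the paper simply says that Proposition~\ref{prop:classification-general} forces $\varphi^+=\varphi^\ast=\varphi^-$ $\nu$-a.e., which is incompatible with~A. Your version spells out more carefully why cases~(ii) and~(iii) of that proposition are ruled out, but the substance is identical.

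However, your proposal addresses only this last sentence of the theorem. The statement as given has several earlier parts that you do not touch: the forward $f$-invariance of $\overline{\cG\varphi^\pm}$ and $\widetilde{\cG\varphi^\pm}$; the dichotomy itself (that $\widetilde{\cG\varphi^+}\cap\widetilde{\cG\varphi^-}\neq\emptyset$ implies $P=C^+\cap C^-$ is a dense $G_\delta$); and the assertions about $\cP$ in case~B (its forward invariance and $\pi(\cP)=\T^2$). These require separate arguments. In the paper, forward invariance of the closures uses that $\varphi^\pm(\xi,x)=\phi^\pm(x)$ depends only on $x$, so one can approximate in the $x$-variable alone and avoid the discontinuity of~$T$; the identity $P=C^+\cap C^-$ uses a combination of the semi-continuity of $\varphi^\pm$ and the fact that $\pi(\widetilde{\cG\varphi^+}\cap\widetilde{\cG\varphi^-})=\T^2$ (which in turn uses that this projection is closed, $T$-forward invariant, and of product form $\T\times D$); and $\pi(\cP)=\T^2$ is obtained by the same mechanism applied to~$\cP$. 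None of this follows from Proposition~\ref{prop:classification-general}, so if you intend to prove the full theorem you still have the main body of the work ahead of you.
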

For a probability measure $\nu$ on $\T^2$ denote by $\nu^{(1)}$ and $\nu^{(2)}$ the first and second marginal of $\nu$, respectively, i.e. $\nu^{(i)}=\nu\circ\pi_i^{-1}$ $(i=1,2)$. Observe that each Gibbs measure $\nu\in\cM_T(\T^2)$ has a nice product structure in the sense that
$\nu\approx\nu^{(1)}\otimes\nu^{(2)}$, and, even more, if $\nu=\int _\T\nu_\xi\,d\nu^{(1)}(\xi)$, then $\nu_\xi\approx\nu^{(2)}$ for $\nu^{(1)}$-a.a. $\xi$, and that $\supp(\nu^{(i)})=\T$ $(i=1,2)$ for such $\nu$. Only these properties of Gibbs measures are really required for the next two theorems.
Denote by $\nu_{\varphi^*}=\int_\T(\nu_{\varphi^*})_\xi\,d\nu^{(1)}(\xi)$ the decomposition of $\nu_{\varphi^*}$ into conditional measures on the hypersurfaces $\{\xi\}\times\T\times I$. 

\begin{theorem}[Fine structure in case~A]
\label{theo:A-class}
Suppose case~A of Theorem~\ref{theo:top-class}. Let $\nu\in\cE_T(\T^2)$ be a Gibbs measure. Then for $\nu^{(1)}$-a.a. $\xi$ there is a unique strong stable fibre $\gamma_\xi^*$ in the $\xi$-plane which supports the conditional measure $(\nu_{\varphi^*})_\xi$, and exactly one of the following cases occurs:
\begin{enumerate}[A1)]
\item[A$_\nu1$)] $\nu_{\varphi^+}$ and $\nu_{\varphi^-}$ are the only $f$-invariant measures on $\widetilde{\cG\varphi^+}$ and $\widetilde{\cG\varphi^-}$, respectively, which project to $\nu$.
For $\nu^{(1)}$-a.a. $\xi$, $\gamma^*_\xi$ is disjoint from $\widetilde{\cG\varphi^+}$ and from $\widetilde{\cG\varphi^-}$. Moreover,
$\lambda^\pm(\nu)<0$ while $\lambda^*(\nu)>0$.
\item[A$_\nu2^+$)] All $f$-invariant measures which project to $\nu$ are supported by $\widetilde{\cG\varphi^+}$ and $\widetilde{\cG\varphi^-}$.
$\nu_{\varphi^*}$ is supported  by the lower bounding graph $\varphi^{+-}:\theta\mapsto\inf\left(\widetilde{\cG\varphi^+}\right)_\theta$ of
$\widetilde{\cG\varphi^+}$.
For $\nu^{(1)}$-a.a. $\xi$, $\gamma_\xi^*=\cG\varphi^{+-}$.
Moreover, either
\begin{compactenum}[a)]
\item $\varphi^{+-}<\varphi^+$ $\nu$-almost surely and
$\lambda^\pm(\nu)<0$ while $\lambda^*(\nu)>0$, or
\item $\varphi^{+-}=\varphi^+$ $\nu$-almost surely and
$\lambda^-(\nu)<0$ while $\lambda^+(\nu)=\lambda^*(\nu)=0$.
\end{compactenum}
\item[A$_\nu2^-$)] As A$_\nu2^+$), but with roles of $\varphi^+$ and $\varphi^-$ interchanged.
\end{enumerate}
\end{theorem}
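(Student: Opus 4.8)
The plan is to feed the Gibbs measure $\nu$ into Proposition~\ref{prop:classification-general} and then exploit the geometry of the strong stable fibres from Lemma~\ref{lemma:shortX3}. \emph{Reduction.} Since we are in case~A, Theorem~\ref{theo:top-class} gives $P=\emptyset$, hence $\varphi^-<\varphi^+$ everywhere and case~(i) of Proposition~\ref{prop:classification-general} cannot occur for $\nu$; so $\nu$ falls under case~(ii) or~(iii). In either case $\cG\varphi^*$ is $\nu$-a.s.\ forward $f$-invariant (in~(iii) this is part of the statement; in~(ii) it holds because $\varphi^*$ equals $\varphi^+$ or $\varphi^-$ $\nu$-a.s.), so $\nu_{\varphi^*}$ is an $f$-invariant probability measure that projects onto $\nu$, and it is ergodic because $(\id,\varphi^*)$ is a $\nu$-measure isomorphism onto $(f,\nu_{\varphi^*})$. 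From \eqref{eq:DF} the three Lyapunov exponents of $(f,\nu_{\varphi^*})$ are $\int\log\tau'\,d\nu^{(1)}>0$ (base), its negative $\int\log\sigma\,d\nu^{(1)}<0$ along the bundle $X$, and the exponent $\lambda^*(\nu)$ along the pure $y$-direction $(0,0,1)$; in particular the strong stable fibres are exactly the stable manifolds of $\nu_{\varphi^*}$ and carry its unique negative exponent.

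\emph{Step 1 (the crux): for $\nu^{(1)}$-a.e.\ $\xi$ the conditional $(\nu_{\varphi^*})_\xi$ is carried by a single strong stable fibre $\gamma^*_\xi$.} Fix a reference coordinate $x_0\in\I$ and, for $\nu$-a.e.\ $(\xi,x)$, consider the leaf $\ellss_{(\xi,x,\varphi^*(\xi,x))}$ (whose domain is $\I$ for these ``interesting'' points). I would show that two such leaves coming from the same $\xi$-plane coincide for $\nu_\xi\otimes\nu_\xi$-a.e.\ pair. Let $p_i=(\xi,x_i,\varphi^*(\xi,x_i))$; because $T$ is a baker map its second coordinate is uniformly contracted, so the base components of $f^np_1$ and $f^np_2$ converge geometrically, and by invariance $f^np_i=(T^n(\xi,x_i),\varphi^*(T^n(\xi,x_i)))$. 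If $p_1\notin\ellss_{p_2}$ then $\ellss_{p_2}(x_1)\neq\varphi^*(\xi,x_1)$, and whichever side it is on, the point $(\xi,x_1,\ellss_{p_2}(x_1))$ is attracted under $f^n$ to $\cG\varphi^+$ or to $\cG\varphi^-$ (by \eqref{eq:separation} when it lies in $\GA_{(\xi,x_1)}$, and by monotonicity together with the pullback construction $\psi_k^\pm\to\varphi^\pm$ otherwise); since it also lies on $\ellss_{p_2}$, its forward orbit shadows that of $p_2$, forcing $\varphi^*(T^n(\xi,x_2))$ to approach $\cG\varphi^+$ or $\cG\varphi^-$. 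In case~A the graphs $\widetilde{\cG\varphi^\pm}$ are at uniform distance $\delta>0$, and $\varphi^-<\varphi^*<\varphi^+$ $\nu$-a.s.\ (case~(iii)) or $\varphi^*\in\{\varphi^+,\varphi^-\}$ $\nu$-a.s.\ (case~(ii)); combining this with equidistribution of the $\nu$-generic orbit of $(\xi,x_2)$ yields a contradiction unless $p_1\in\ellss_{p_2}$. Setting $\gamma^*_\xi:=\ellss_{(\xi,x_0,\varphi^*(\xi,x_0))}$ for a $\nu_\xi$-generic $x_0$ gives the claimed fibre, and $\supp((\nu_{\varphi^*})_\xi)=\gamma^*_\xi$ because $\supp(\nu_\xi)=\T$. \textbf{This is the step I expect to be the real obstacle}: making the ``shadowing versus separation'' dichotomy quantitative in the presence of the discontinuities of $\varphi^\pm$, controlling the exceptional $\xi$ and the domains $\dom$, and — if the pointwise argument above turns out to be too fragile — replacing it by a measure-theoretic one based on the $\CLip$ strong stable holonomy and the product structure $\nu_\xi\approx\nu^{(2)}$ of the Gibbs measure, showing directly that the stable conditionals of $\nu_{\varphi^*}$ are atomic.

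\emph{Step 2 (the trichotomy).} The sets $\widetilde{\cG\varphi^\pm}$ are closed, forward $f$-invariant (Theorem~\ref{theo:top-class}) and, in case~A, disjoint; hence each is $\nu_{\varphi^*}$-a.s.\ invariant, so by ergodicity $\nu_{\varphi^*}(\widetilde{\cG\varphi^\pm})\in\{0,1\}$, not both $1$. If both vanish we are in A$_\nu1$; the identities $(\nu_{\varphi^*})_\xi(\widetilde{\cG\varphi^\pm})=0$ are upgraded to genuine disjointness of the leaf $\gamma^*_\xi$ from $\widetilde{\cG\varphi^\pm}$ by noting that at a $\nu_\xi$-generic $x$ one has $\gamma^*_\xi(x)=\varphi^*(\xi,x)<\varphi^{+-}(\xi,x)=\inf(\widetilde{\cG\varphi^+})_{(\xi,x)}$, together with forward invariance. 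If instead $\nu_{\varphi^*}(\widetilde{\cG\varphi^+})=1$, then $\varphi^*(\theta)\in(\widetilde{\cG\varphi^+})_\theta=[\varphi^{+-}(\theta),\varphi^+(\theta)]$ $\nu$-a.s., and in fact $\varphi^*(\theta)=\varphi^{+-}(\theta)$: otherwise some $y\in(\varphi^{+-}(\theta),\varphi^*(\theta))\subseteq(\widetilde{\cG\varphi^+})_\theta$ would be attracted by \eqref{eq:separation} to $\cG\varphi^-\subseteq\widetilde{\cG\varphi^-}$, contradicting forward invariance of $\widetilde{\cG\varphi^+}$ and $\mathrm{dist}(\widetilde{\cG\varphi^+},\widetilde{\cG\varphi^-})=\delta>0$. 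With Step~1 this yields $\gamma^*_\xi=\cG\varphi^{+-}$ for $\nu^{(1)}$-a.e.\ $\xi$; the symmetric case $\nu_{\varphi^*}(\widetilde{\cG\varphi^-})=1$ gives A$_\nu2^-$.

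\emph{Step 3 (identifying the subcase and the exponents).} In A$_\nu1$ the measure $\nu_{\varphi^*}$ meets neither $\widetilde{\cG\varphi^\pm}$, which is incompatible with case~(ii) of Proposition~\ref{prop:classification-general} (there $\nu_{\varphi^*}=\nu_{\varphi^\pm}$ sits on $\cG\varphi^\pm$); hence case~(iii) holds and $\lambda^\pm(\nu)<0<\lambda^*(\nu)$. In A$_\nu2^+$: if Proposition~\ref{prop:classification-general} puts $\nu$ in case~(iii) then $\lambda^\pm(\nu)<0<\lambda^*(\nu)$ and $\varphi^{+-}=\varphi^*<\varphi^+$ $\nu$-a.s.\ (alternative~(a)); if it puts $\nu$ in case~(ii), the subcase $\varphi^-=\varphi^*$ is excluded (it would force $\nu_{\varphi^*}$ onto $\widetilde{\cG\varphi^-}$), so $\varphi^-<\varphi^*=\varphi^+$ $\nu$-a.s., i.e.\ $\lambda^-(\nu)<0=\lambda^+(\nu)=\lambda^*(\nu)$ and $\varphi^{+-}=\varphi^*=\varphi^+$ $\nu$-a.s.\ (alternative~(b)); A$_\nu2^-$ is symmetric. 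Finally, the statements about which $f$-invariant measures project to $\nu$ follow from the last bullet of Proposition~\ref{prop:classification-general} — the only ergodic $f$-invariant measures projecting to $\nu$ are $\nu_{\varphi^-},\nu_{\varphi^*},\nu_{\varphi^+}$ — together with an ergodic-decomposition argument and the locations of these three measures (on $\cG\varphi^-$, on $\gamma^*_\xi$, on $\cG\varphi^+$) established above.
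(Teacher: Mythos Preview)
Your overall architecture matches the paper's: reduce via $P=\emptyset$ to cases~(ii)/(iii) of Proposition~\ref{prop:classification-general}, establish that $(\nu_{\varphi^*})_\xi$ lives on a single strong stable fibre, then split according to $\nu_{\varphi^*}(\widetilde{\cG\varphi^\pm})\in\{0,1\}$ and read off the exponents. Steps~3--4 are essentially the paper's argument, and your separation-based proof that $\varphi^*=\varphi^{+-}$ in A$_\nu2^+$ is a valid alternative to the paper's direct appeal to the classification of invariant graphs in Proposition~\ref{prop:classification-general}.

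Where you diverge is Step~1, and the gap you flag is real. The paper does \emph{not} argue via your shadowing/separation dichotomy; instead it isolates the content as Lemma~\ref{lem:private_discussion}, whose mechanism is: (a)~the forward Lyapunov exponent $\lambda_{(\xi,x,y)}$ is constant along each strong stable fibre (Lemma~\ref{lem:lss_lyapunov}), and (b)~by Corollary~\ref{coro:exponent}, for $\nu$-a.e.\ $\theta$ one has $\lambda_{(\theta,y)}=\lambda^*(\nu)\geq0$ at $y=\varphi^*(\theta)$ and $\lambda_{(\theta,y)}<0$ for $y<\varphi^*(\theta)$ (after WLOG $\lambda^-(\nu)<0$). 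Combining (a) and (b) forces $\ellss_{(\theta,\varphi^*(\theta))}(u)\geq\varphi^*(\xi,u)$ for $u$ in a $\nu^{(2)}$-full set; symmetry in $x,u$ and the ordering of fibres give equality. This also yields $\dom_{(\theta,\varphi^*(\theta))}=\I$ for $\nu$-a.e.\ $\theta$, which you simply assumed. Your direct attempt via \eqref{eq:separation} runs into two obstacles: you must compare $\varphi^+(T^n(\xi,x_1))$ with $\varphi^*(T^n(\xi,x_2))$ at \emph{different} nearby base points, and only semi-continuity of $\varphi^\pm$ is available; and in the case-(ii) subcase $\varphi^*=\varphi^+$ $\nu$-a.s., when $\ellss_{p_2}(x_1)>\varphi^*(\xi,x_1)$ the orbit is attracted to $\varphi^+=\varphi^*$, so no contradiction with equidistribution arises. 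The Lyapunov-exponent formulation sidesteps both issues cleanly.

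A second, smaller gap is the A$_\nu1$ disjointness: knowing $\gamma_\xi^*(x)<\varphi^{+-}(\xi,x)$ at $\nu_\xi$-generic $x$ does not by itself exclude $\gamma_\xi^*$ meeting $\widetilde{\cG\varphi^+}$ at non-generic $u$. The paper makes your ``forward invariance'' hint precise: if the set $A$ of $(\xi,x)$ for which $\ellss_{(\xi,x,\varphi^*(\xi,x))}$ meets $\widetilde{\cG\varphi^+}$ had positive $\nu$-measure, one iterates forward, uses Lemma~\ref{lem:lss_invariant} and the $f$-invariance of $\widetilde{\cG\varphi^+}$ to drag the touch point toward the base coordinate $x$, and concludes $(\xi,x,\varphi^*(\xi,x))\in\widetilde{\cG\varphi^+}$ on a full-measure set --- contradicting~A$_\nu1$.
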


Denote by $m$ Lebesgue measure on $\T$. The following is an immediate corollary to Theorem~\ref{theo:A-class}.
\begin{corollary*}
Assume case~A of Theorem~\ref{theo:top-class}. Then exactly one of the following two possibilities occurs:
\begin{compactenum}[A1)]
\item[A1)]
For $m$-a.e.~$\xi$, the topological support of the  conditional measure $((m^2)_{\varphi^*})_\xi$  
is disjoint from $\widetilde{\cG\varphi^-}$ and $\widetilde{\cG\varphi^+}$
(it lies ``between'' these two sets).
\item[A2)] The measure $(m^2)_{\varphi^*}$ is supported by
$\cG\varphi^{+-}$ or by $\cG\varphi^{-+}$.
This graph depends only on the coordinate $x$, is of class $\CLip$, and its restriction to the $\xi$-plane coincides for $m$-a.a. $\xi$ with the strong stable fibre $\gamma_\xi^*$.
\end{compactenum}
\end{corollary*}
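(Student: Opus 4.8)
\medskip
\noindent\emph{Proof plan.}
The corollary should be read off from Theorem~\ref{theo:A-class} applied to the single measure $\nu=m^2:=m\otimes m$, Lebesgue measure on $\T^2$, by collapsing the trichotomy A$_\nu 1$)/A$_\nu 2^+$)/A$_\nu 2^-$) onto the dichotomy A1)/A2). First I would check that $m^2$ is admissible in Theorem~\ref{theo:A-class}: it is $T$-invariant and ergodic (for a generalized baker map $T$ it is even a Bernoulli automorphism), and it carries, in the strongest possible form, the ``product'' structure which, by the remark preceding Theorem~\ref{theo:A-class}, is all that the theorem requires of a Gibbs measure: $(m^2)^{(1)}=(m^2)^{(2)}=m$, $m^2=(m^2)^{(1)}\otimes(m^2)^{(2)}$, the disintegration of $m^2$ over the first coordinate is constantly $m$, and $\supp m=\T$. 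Hence exactly one of A$_\nu 1$), A$_\nu 2^+$), A$_\nu 2^-$) holds for $\nu=m^2$, and it remains to unwind each.

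If A$_\nu 1$) holds, it provides, for $m$-a.e.\ $\xi$, a strong stable fibre $\gamma_\xi^*$ in the $\xi$-plane that carries $((m^2)_{\varphi^*})_\xi$ and is disjoint from $\widetilde{\cG\varphi^+}$ and $\widetilde{\cG\varphi^-}$. To pass from ``carries'' to ``has topological support inside'' I would argue that $\gamma_\xi^*$ is a \emph{compact} arc: it is the graph of a solution $\ellss_{(\xi,\cdot)}$ on its domain $\dom\subseteq\I$; the image of $((m^2)_{\varphi^*})_\xi$ under the projection to the $x$-coordinate is the conditional measure of $m^2$ on $\{\xi\}\times\T$, namely $m$, which has full support on $\T$; hence $\dom=\I$, so $\gamma_\xi^*$ is the graph of a continuous function on a compact interval and therefore closed in $\T^2\times I$. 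Then $\supp((m^2)_{\varphi^*})_\xi\subseteq\gamma_\xi^*$ is disjoint from $\widetilde{\cG\varphi^\pm}$, and, since $\varphi^-\leqslant\varphi^*\leqslant\varphi^+$, this support ``lies between'' $\widetilde{\cG\varphi^-}$ and $\widetilde{\cG\varphi^+}$. This is A1).

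If A$_\nu 2^+$) holds (the case A$_\nu 2^-$) is symmetric, producing $\cG\varphi^{-+}$ in place of $\cG\varphi^{+-}$), then in either subcase a) or b) of A$_\nu 2^+$) the theorem asserts that $(m^2)_{\varphi^*}$ is supported by $\cG\varphi^{+-}$ with $\varphi^{+-}(\theta)=\inf(\widetilde{\cG\varphi^+})_\theta$, which is the support statement of A2). For the regularity: by Lemma~\ref{lem:phi_depend_x} the function $\varphi^+$ depends on $\theta=(\xi,x)$ only through $x$, so $\cG\varphi^+$, its closure $\overline{\cG\varphi^+}$ and its filled-in closure $\widetilde{\cG\varphi^+}$ are invariant under translations in the $\xi$-coordinate, whence $\varphi^{+-}$ depends only on $x$; and A$_\nu 2^+$) identifies the restriction of $\cG\varphi^{+-}$ to the $\xi$-plane with $\gamma_\xi^*$ for $m$-a.e.\ $\xi$, which is at once the asserted coincidence and --- since every strong stable fibre is of class $\CLip$ by Picard--Lindel\"of applied to \eqref{eq:IVP} with the bounds \eqref{eq:X3deriv-y-bound}--\eqref{eq:X3deriv-x-bound}, as noted after Definition~\ref{def:l_ss} --- the statement that $\varphi^{+-}$ is of class $\CLip$. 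This is A2); the mutual exclusivity of A1) and A2) is inherited from that of A$_\nu 1$), A$_\nu 2^+$), A$_\nu 2^-$).

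I expect the only non-routine step to be the compactness argument in case A$_\nu 1$): one must first confirm that the strong stable fibre carrying $((m^2)_{\varphi^*})_\xi$ is defined on all of $\I$, so that ``supported on $\gamma_\xi^*$'' may legitimately be strengthened to ``topological support disjoint from $\widetilde{\cG\varphi^\pm}$''. Everything else is bookkeeping built on Theorem~\ref{theo:A-class}, Lemma~\ref{lem:phi_depend_x}, and the $\CLip$-regularity of strong stable fibres.
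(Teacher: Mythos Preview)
Your proposal is correct and follows exactly the approach the paper intends: the corollary is stated there as ``immediate'' from Theorem~\ref{theo:A-class}, and you simply specialize that theorem to $\nu=m^2$ and read off the dichotomy. The only place you do more work than necessary is the compactness step in A$_\nu 1$): rather than arguing indirectly via the projection of $((m^2)_{\varphi^*})_\xi$, you can invoke Lemma~\ref{lem:private_discussion} directly, which already gives $\dom_{(\theta,\varphi^*(\theta))}=\I$ for $m^2$-a.a.\ $\theta$ (since $m^2(P)=0$ in case~A).
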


\begin{theorem}[Fine structure in case~B]
\label{theo:B-class}
Assume case~B of Theorem~\ref{theo:top-class}.
Then exactly one of the following two possibilities occurs:
\begin{compactenum}[B1)]
\item[B1)] There is some $ \theta \in \T ^2$ such that $\# \{ y \in I : ( \theta , y )  \in \cP \} > 1$. Moreover,
$ \nu ( P ) = 1 $ and $\lambda^*(\nu)\leqslant0$ for any Gibbs measure $ \nu $.
\item[B2)] $\cP$ is the graph of a $ f $-invariant continuous function
$\hat\varphi:\T^2\to I$, which depends only on the variable $x$, i.e. $\hat\varphi(\xi,x)=\hat\phi(x)$.

Moreover,
$\hat\varphi = \varphi ^\ast$ $\nu$-a.s.~for any Gibbs measure $\nu$, and
$\lambda^*(\nu)\leqslant0$ if $\nu(P)=1$.
Regarding the regularity of $\hat\phi$, there are two possibilities:
\begin{compactenum}[a)]
\item[B2-i)] $\hat\phi$ is not everywhere differentiable with bounded derivative, or
\item[B2-ii)] $\hat\phi$ is of class $\CLip$. In this case $\hat\phi=\ellss_{(\xi,x,\hat\phi(x))}$ for all $(\xi,x)\in\T^2$. 
\end{compactenum} 
\end{compactenum} 
If there is some Gibbs measure $\nu$ with $\nu(P)=0$, then B2-ii occurs.
\end{theorem}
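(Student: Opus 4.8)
\emph{Proof plan.}
The first step is to remove the $\xi$-dependence. By Lemma~\ref{lem:phi_depend_x}, $\varphi^\pm=\phi^\pm\circ\pi_x$, so in case~B one has $P=\T\times P_0$ and $\cP=\T\times\cP_0$, where $P_0=\{x\in\T:\phi^+(x)=\phi^-(x)\}$ is dense $G_\delta$, $\hat\phi:=\phi^+|_{P_0}=\phi^-|_{P_0}$ is continuous on $P_0$, and $\cP_0:=\overline{\cG(\hat\phi|_{P_0})}\subseteq\T\times I$ is compact with $\pi_x(\cP_0)=\T$. Since the $x$-component of $T$ is, according as $\xi\in[0,a)$ or $\xi\in[a,1)$, one of the two contracting inverse branches $\beta_0(x)=ax$, $\beta_1(x)=a+(1-a)x$ of $\tau$ (with $\beta_j'=\sigma(\xi)$ for the matching branch), forward invariance of $\cP$ is equivalent to $\cP_0$ being invariant under the fibred maps $F_j(x,y)=(\beta_j(x),f_x(y))$, $j=0,1$; and continuity of $\phi^\pm$ at the points of $P_0$ gives $(\cP_0)_x=\{\hat\phi(x)\}$ for $x\in P_0$, so $\cP_0$ can fail to be a graph only over $\T\setminus P_0$. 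The alternatives B1, B2 of the theorem are then exactly ``$\cP_0$ is, respectively is not, the graph of a function on $\T$'', hence mutually exclusive and exhaustive.

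Next, case~B2, i.e.\ $\cP_0$ a graph. Being compact and surjecting onto the compact space $\T$, it is the graph of a continuous $\hat\phi:\T\to I$ with $\phi^-\le\hat\phi\le\phi^+$ (as $\cP\subseteq\overline{\cG\varphi^\pm}\subseteq\GA$). Invariance under the $F_j$ reads $\hat\phi(\beta_j(x))=f_x(\hat\phi(x))$ for all $x$, i.e.\ $\hat\varphi:=\hat\phi\circ\pi_x$ is an everywhere-defined $T$-invariant graph; hence by Proposition~\ref{prop:classification-general}, $\hat\varphi\in\{\varphi^-,\varphi^*,\varphi^+\}$ $\nu$-a.s.\ for every $\nu\in\cE_T(\T^2)$. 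For a Gibbs measure $\nu$ with $\nu(P)=1$ this already gives $\hat\varphi=\varphi^*=\varphi^\pm$ $\nu$-a.s.\ and, by case~(i) of Proposition~\ref{prop:classification-general}, $\lambda^*(\nu)=\lambda^\pm(\nu)\le0$. For the regularity dichotomy: either $\hat\phi$ is not everywhere differentiable with bounded derivative (case B2-i), or it is, and then I would differentiate $\hat\phi(\beta_j(x))=f_x(\hat\phi(x))$ (licit since $f$ is $C^2$ by Hypothesis~\ref{hyp:regularity}) and use $\beta_j'=\sigma(\xi)$ to see that the bounded function $(\xi,x)\mapsto\hat\phi'(x)$ solves, along $\cG\hat\varphi$, the $y$-component of the equivariance relation $Df\cdot X=\sigma\,(X\circ f)$ from \eqref{eq:DF}; since $\sigma/f_\cdot'<1$ on $\cG\hat\varphi$ by Hypothesis~\ref{hyp:weak-contraction}, that relation read backwards is a sup-norm contraction, so its unique bounded solution is $X_3$; thus $\hat\phi'(x)=X_3(\xi,x,\hat\phi(x))$ for all $(\xi,x)$, so $\hat\phi$ solves the initial value problem of Definition~\ref{def:l_ss} on all of $\I$, giving $\hat\phi=\ellss_{(\xi,x,\hat\phi(x))}$ and $\dom_{(\xi,x,\hat\phi(x))}=\I$ for every $(\xi,x)$, and $\hat\phi\in\CLip$ by Lemma~\ref{lemma:shortX3}. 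As $\CLip$ implies differentiable with bounded derivative, B2-i and B2-ii are mutually exclusive and exhaustive.

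Finally, case~B1 and the last assertion. If $\cP_0$ is not a graph, pick $x_0$ and $y_1<y_2$ in $(\cP_0)_{x_0}$; applying words $\mathbf j$ in $F_0,F_1$ one gets that $(\cP_0)_{\beta_{\mathbf j}(x_0)}$ contains the nondegenerate $f$-image of $[y_1,y_2]$ along $\mathbf j$, while $\{\beta_{\mathbf j}(x_0)\}$ is dense in $\T$; hence $\cP_0$, and therefore $\overline{\cG\varphi^\pm}$ (since $\cP\subseteq\overline{\cG\varphi^+}\cap\overline{\cG\varphi^-}$), has nondegenerate vertical slices over a dense subset of $\T$. I claim this forces $\nu(P)=1$ for every Gibbs $\nu$: if some Gibbs $\nu$ had $\nu(P)<1$, then $\nu(P)=0$ ($P$ is $T$-invariant, $\nu$ ergodic), so $\varphi^-<\varphi^+$ $\nu$-a.s.\ and $\nu$ falls under case~(ii) or~(iii) of Proposition~\ref{prop:classification-general}, whence $\lambda^+(\nu)<0$ or $\lambda^-(\nu)<0$; but for such $\nu$, by \cite{Otani2015} (as in Theorem~\ref{theo:A-class}) the conditionals $(\nu_{\varphi^\pm})_\xi$ are carried by a single $\CLip$ strong stable fibre, which is incompatible with $\overline{\cG\varphi^\pm}$ carrying the thick, densely multivalued structure just exhibited — a contradiction. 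So $\nu(P)=1$ for every Gibbs $\nu$, and then $\lambda^*(\nu)\le0$ as in case~B2. Now suppose some Gibbs $\nu$ has $\nu(P)=0$: the B1 conclusion is then violated, so $\cP_0$ is a graph (case~B2) and $\nu$ is under case~(ii)/(iii); by \cite{Otani2015}, $(\nu_{\varphi^*})_\xi$ is carried, for $\nu^{(1)}$-a.a.\ $\xi$, by a single $\CLip$ strong stable fibre, say the graph of a continuous $\ell_\xi$. Since $\varphi^-\le\varphi^*\le\varphi^+$ and $\varphi^\pm$ are continuous at each point of $P_0$ with $\varphi^-=\varphi^+=\hat\phi$ there, a squeeze along any sequence in $\supp(\nu_\xi)$ tending to $x_0\in P_0$ yields $\ell_\xi(x_0)=\hat\phi(x_0)$; as $P_0$ is dense and $\ell_\xi,\hat\phi$ are continuous, $\ell_\xi=\hat\phi$ on $\T$. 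Hence $\hat\phi$ equals the $\CLip$ curve $\ell_\xi$ (case B2-ii), and since this strong stable fibre carries $(\nu_{\varphi^*})_\xi$, $\varphi^*=\hat\varphi$ on a set of full $\nu_\xi\approx\nu^{(2)}$-measure for $\nu^{(1)}$-a.a.\ $\xi$, i.e.\ $\hat\varphi=\varphi^*$ $\nu$-a.s.

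The step I expect to be the main obstacle is the claim in case~B1 that the existence of a Gibbs measure with $\nu(P)<1$ is incompatible with $\overline{\cG\varphi^\pm}$ being densely, but possibly very thinly, multivalued. Making this precise requires a quantitative comparison of the propagated vertical gaps of $\cP_0$ with the strong stable foliation that, by \cite{Otani2015}, regularises $\varphi^\pm$ once their fibre Lyapunov exponents are negative, and it is here that the partial hyperbolicity of Hypothesis~\ref{hyp:weak-contraction} and the bounds \eqref{eq:X3deriv-y-bound}, \eqref{eq:X3deriv-x-bound} on $X_3$ must be used with care.
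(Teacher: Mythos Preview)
Your B2 analysis --- continuity and $f$-invariance of $\hat\varphi$ from compactness of $\cP$, the $\nu(P)=1$ subcase via Proposition~\ref{prop:classification-general}(i), and the regularity dichotomy by differentiating the invariance relation and identifying $\hat\phi'$ with $X_3$ through the contraction $\|\sigma/f_\cdot'\|<1$ --- matches the paper's proof essentially line by line.

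The gap is in the B1 paragraph, and you have correctly located it. The ``incompatibility'' claim (that $(\nu_{\varphi^\pm})_\xi$ living on a single strong stable fibre contradicts $\cP$ being densely multivalued) is not justified; note also that the relevant result concerns $\varphi^*$, not $\varphi^\pm$, so even the premise is off. More importantly, the whole detour through propagated vertical gaps is unnecessary: the squeeze argument you give in the \emph{last} paragraph already does the entire job, and does not require the B2 hypothesis you impose on it. Concretely, the paper opens with exactly this observation (its Lemma~\ref{lem:private_discussion}, which is what underlies your citation ``\cite{Otani2015} as in Theorem~\ref{theo:A-class}''): for any Gibbs $\nu$ with $\nu(P)=0$ one has $\phi^-\le\ellss_{(\theta,\varphi^*(\theta))}\le\phi^+$ on all of $\I$ for $\nu$-a.a.\ $\theta$. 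On the dense set $P_0$ the outer functions coincide, so the strong stable fibre agrees with $\phi^\pm|_{P_0}$ there; since the fibre is continuous, its graph is exactly $\cP_0=\overline{\cG(\phi^\pm|_{P_0})}$. Thus $\cP_0$ is automatically the graph of a single $\CLip$ curve, which simultaneously rules out B1 and delivers B2-ii. The B1 ``moreover'' and the final assertion of the theorem are then both immediate, with no separate incompatibility argument. Reorganised this way, your proof becomes the paper's.
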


\begin{corollary*}
Assume case~B of Theorem~\ref{theo:top-class}.
\begin{compactenum}[a)]
\item If $m^2 (P)=0$, then $\cP$ is the graph of a $\CLip$-function over $\T^2$, that depends only on the stable coordinate.
\item If $\varphi^+$ or $\varphi^-$ is continuous, then its graph coincides with the set $\cP$, and case~B1 is excluded. If, in this situation, $m^2(P)=0$, then
$\varphi^+$ or $\varphi^-$ is $\CLip$, respectively.
\end{compactenum}
\end{corollary*}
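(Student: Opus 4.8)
The plan is to derive both statements directly from Theorem~\ref{theo:B-class}, using as the only external input the fact that Lebesgue measure $m^2$ on $\T^2$ is a Gibbs measure for the generalized baker map $T$. I would first record this: $m$ is $\tau$-invariant (the two affine branches of $\tau$ give $a\,m(A)+(1-a)\,m(A)=m(A)$), hence $m^2$ is $T$-invariant and ergodic, it has the product structure of Gibbs measures noted before Theorem~\ref{theo:A-class}, and it is in fact the SRB/maximal-entropy measure of $T$. Consequently every conclusion of Theorem~\ref{theo:B-class} phrased for Gibbs measures applies to $\nu=m^2$, and $\nu(P)=m^2(P)$.

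For part~a), suppose $m^2(P)=0$. Then $\nu=m^2$ is a Gibbs measure with $\nu(P)=0<1$, which excludes alternative B1 of Theorem~\ref{theo:B-class} (there $\nu(P)=1$ for every Gibbs measure); hence B2 holds. As there is a Gibbs measure with $\nu(P)=0$, the final sentence of Theorem~\ref{theo:B-class} forces B2-ii, in which $\cP=\cG\hat\varphi$ with $\hat\varphi(\xi,x)=\hat\phi(x)$ and $\hat\phi\in\CLip$. Since a function on $\T^2$ that depends only on $x$ and is $\CLip$ in $x$ is $\CLip$, this is precisely the claim that $\cP$ is the graph over $\T^2$ of a $\CLip$-function depending only on the stable coordinate~$x$.

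For part~b), assume $\varphi^+$ is continuous (the case of $\varphi^-$ being symmetric, with the roles of $\varphi^+$ and $\varphi^-$ exchanged). We are in case~B of Theorem~\ref{theo:top-class}, so $\cP\subseteq\overline{\cG\varphi^+}\cap\overline{\cG\varphi^-}$ and $\pi(\cP)=\T^2$; by continuity $\overline{\cG\varphi^+}=\cG\varphi^+$, hence $\cP\subseteq\cG\varphi^+$, and a subset of a graph that projects onto the whole base must be that graph, so $\cP=\cG\varphi^+$. In particular every fibre $\{y\in I:(\theta,y)\in\cP\}$ is a singleton, so B1 is excluded and B2 holds. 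If moreover $m^2(P)=0$, then part~a) exhibits $\cP$ as the graph of a $\CLip$-function; since $\cP=\cG\varphi^+$, that function equals $\varphi^+$, so $\varphi^+\in\CLip$ (and depends only on $x$).

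The proof carries essentially no analytic content beyond Theorem~\ref{theo:B-class}: the one step needing care is the verification that Lebesgue measure on $\T^2$ qualifies as a Gibbs measure for $T$ in the sense the paper uses, since Theorem~\ref{theo:B-class} formulates its dichotomies only for Gibbs measures; the remainder is bookkeeping among the alternatives B1, B2, B2-i, B2-ii together with the elementary observation about subsets of graphs.
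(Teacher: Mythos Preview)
Your proof is correct and is precisely the argument the paper has in mind: the Corollary is stated without proof because it follows immediately from Theorem~\ref{theo:B-class} together with the facts recorded in case~B of Theorem~\ref{theo:top-class}, using only that $m^2$ is a Gibbs measure for $T$. Your handling of part~b) via $\cP\subseteq\overline{\cG\varphi^+}=\cG\varphi^+$ and $\pi(\cP)=\T^2$ is the intended one-line observation.
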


The following theorem is an immediate consequence of Theorems~\ref{theo:top-class} and~\ref{theo:B-class}.

\begin{theorem}
If there exists a Gibbs measure $\nu$ with $\nu (P)=0$, then $\cP=\emptyset$ or $\cP$ is the graph of a $\CLip$-function over $\T^2$, that depends only on the stable coordinate.
\end{theorem}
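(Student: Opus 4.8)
The plan is to obtain the statement as a direct consequence of the topological dichotomy of Theorem~\ref{theo:top-class} together with the fine structure in case~B from Theorem~\ref{theo:B-class}; no genuinely new argument is required. First I would fix a Gibbs measure $\nu$ with $\nu(P)=0$ and invoke Theorem~\ref{theo:top-class}, according to which exactly one of the cases~A and~B occurs. In case~A one has $P=\emptyset$, so that $\cG\varphi^\pm|_P=\emptyset$ and hence $\cP=\overline{\cG\varphi^\pm|_P}=\emptyset$; this is already the first alternative of the claim.

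It then remains to treat case~B, where the plan is to exclude possibility B1 and to upgrade possibility B2 to the sub-case B2-ii. Possibility B1 of Theorem~\ref{theo:B-class} asserts that $\nu'(P)=1$ for \emph{every} Gibbs measure $\nu'$; applying this to the measure $\nu$ at hand contradicts $\nu(P)=0$, so B1 is impossible. Therefore possibility B2 holds, i.e. $\cP$ is the graph of a continuous, $f$-invariant function $\hat\varphi$ depending only on the stable coordinate, $\hat\varphi(\xi,x)=\hat\phi(x)$. Finally, the last sentence of Theorem~\ref{theo:B-class} — which applies precisely because a Gibbs measure $\nu$ with $\nu(P)=0$ exists — forces the sub-case B2-ii, in which $\hat\phi$ is of class $\CLip$ (and coincides with $\ellss_{(\xi,x,\hat\phi(x))}$ in every $\xi$-plane). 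Hence $\cP$ is the graph of a $\CLip$-function over $\T^2$ depending only on the stable coordinate, which is the second alternative of the claim.

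There is essentially no obstacle in this deduction itself; all the substantive work is carried in the proofs of Theorems~\ref{theo:top-class} and~\ref{theo:B-class}, in particular in the analysis of the strong stable subbundle of Lemma~\ref{lemma:shortX3} and the strong stable fibres of Definition~\ref{def:l_ss}, and in the regularity dichotomy B2-i/B2-ii. The only point that deserves a moment's care is bookkeeping: $\cP$ is defined through $P$, so in case~A — where $P$ is empty — one must read $\cP$ as the (empty) closure of the empty graph; with that reading the two alternatives "$\cP=\emptyset$" and "$\cP$ a $\CLip$-graph depending only on the stable coordinate" exhaust both cases, and the proof is complete.
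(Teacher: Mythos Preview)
Your proposal is correct and matches the paper's own treatment: the paper explicitly states that this theorem ``is an immediate consequence of Theorems~\ref{theo:top-class} and~\ref{theo:B-class}'' and gives no separate proof. Your deduction — case~A gives $\cP=\emptyset$, while in case~B the hypothesis $\nu(P)=0$ rules out B1 and forces B2-ii via the last sentence of Theorem~\ref{theo:B-class} — is exactly the intended argument, and your remark about reading $\cP$ as the empty closure in case~A is the only bookkeeping point worth noting.
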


\noindent
\textbf{Example~\ref{ex:main_example}} (continued).
We fix $(M,r)=(0.86,1.1)$ and consider various $\varepsilon\in[0,0.1]$.
\begin{compactenum}[a)]
\item If $0\leqslant\varepsilon\leqslant0.018$, then there is
$y_\varepsilon\in(0,M)$ such that
$f_x([y_\varepsilon,M])\subseteq(y_\varepsilon,M)$,
$f_x([-M,-y_\varepsilon])\subseteq(-M,-y_\varepsilon)$, and
$\sup_{0\leqslant x\leqslant 1,|y|\geqslant y_\varepsilon}(f_x\circ f_{\tau x})'(y)<0.99$  for all $x\in\T$.
Hence $\varphi^+$ and $\varphi^-$ are continuous and have disjoint graphs,
in particular this is case~A1. For $\varepsilon=0.018$ this is illustrated in 
Figure~\ref{fig:graphs}a.
\item  If $0.019\leqslant\varepsilon\leqslant0.1$, then the branch
$f_{0}$ over the fixed point
$\theta=(0,0)$ of $T$ has a unique fixed point $y_0=f_{0}(y_0)>0.6$.
Hence $y_0=\phi^-(0)=\phi^+(0)$,
so that case~A of Theorem~\ref{theo:top-class} is excluded.

\begin{compactenum}[i)]
\item If $0.019\leqslant\varepsilon\leqslant0.037$, then $f_x\circ f_{\tau x} (0.3)>0.3$
and 
$\sup_{0\leqslant x\leqslant 1,|y|\geqslant 0.3}(f_x\circ f_{\tau x})'(y)<0.997$ 
 for all
$x\in\T$. 
Hence $\phi^+>0.3$, and $\phi^+$ is continuous. As case~A is excluded, $\cP$ coincides with the graph of $\varphi^+$, so that this is case~B2.
The branches $f_{1/3}=f_{2/3}$ over these $2$-periodic points have three fixed points, among them a unique negative one, call it $y_1$,
which
is stable. Hence $\phi^-(1/3)=y_1<0$ (the same holds for $y=2/3$), so that $\phi^-(1/3)\neq\phi^+(1/3)$.
On the other hand, $\nu\{\varphi^-=\varphi^+\}=\nu(P)=1$ and $\lambda^\ast(\nu)\leqslant 0$ for any $\nu\in\cM_T(\T^2)$ with $0\in\supp(\nu)$\,\footnote{\label{foot:proof1}	The claim is shown as follows.
Since $\phi^-(0)=y_0>0.6$, there is some $\delta>0$ such that
$\phi^-(x)>0.3$ for all $x \in [0,\delta]$. By Poincaré's recurrence
theorem, for $\nu$-a.a. $\theta=(\xi,x)$, there is some $n\in\N$ such
that $T^{-2n}\theta\in\T\times [0,\delta]$ so that
\[\varphi^-(\theta)=(f_{\tau x}\circ f_{\tau ^2
x})\circ\dots\circ(f_{\tau^{2n-1}x}\circ f_{\tau
^{2n}x})\circ\phi^-(\tau^{2n}x)>0.3.\]
As all branches $f_x\circ f_{\tau x}$ leave the strip $\{y>0.3\}$ invariant and are uniformmly contracting on this strip, it follows that $\varphi^-(\theta)=\varphi^+(\theta)$ for $\nu$-a.a. $\theta$.}, 
in particular
$\varphi^->0.3$ Lebesgue-a.e.
For $\varepsilon=0.019$ this is illustrated in Figure~\ref{fig:graphs}b.
As the plot reveals, the graph is not as smooth as the strong stable fibres, it seems to be case~B2-i.
Even more, in this special example there exists a $T$-invariant measure $\nu$ with $\nu\{\varphi^- < -0.1\}=1$ and $h_\nu (T)>0$.
Indeed, $\dim _H\{\varphi^- < -0.1\}\geqslant 3/2$.
\footnote{Proof: 
Let $ C:=\left\{ \sum _{j\geqslant 1} a_j/4^j : a_j\in\{1,2\} \right\} $.
The level plot shows that $\sup _{y<-0.1}f_x\circ f_{\tau x}(y)<-0.1$ for all $x\in [1/4,3/4]$, which implies $\phi^-<-0.1$ on $ \tau^{-1} C $ in view of the pull-back construction of $\phi^-$.
As $\dim _H (C)=\frac{\log 2}{\log 4}=\frac{1}{2}$, we have $\dim _H\{\varphi^-<-0.1\}\geqslant\dim_H(\I\times (\tau^{-1}C))=3/2$.
Moreover, as $(C, \tau^2|_{C})$ is isomorphic to the full shift on two symbols (up to at most countably many points), there is a $T$-invariant measure on $\{\varphi^-<-0.1\}$ with entropy $\frac{1}{2}\log 2$.
}

\item If $0.038\leqslant\varepsilon\leqslant0.1$, the branches $f_{1/3}=f_{2/3}$ over these $2$-periodic points have a unique
negative fixed point.
As the values $\phi^+(x)$
and $\phi^-(x)$ are determined through $\tau(x)$, we have
$0,\,1/3,\,1/2,\,2/3,\,1\in P$, $\phi^-(0)=\phi^-(1/2)=\phi^-(1)> 0.6$
and $\phi^+(1/3)=\phi^+(2/3)<-0.6$. In particular, $0,\,1/2$ and $1$ are
interior points of the set $\{x\in\I:\phi^-(x)\geqslant 0.6\}$,
while $1/3$ and $2/3$ are interior points of $\{x\in\I:\phi^+(x)\leqslant -0.6\}$.
For $\varepsilon=0.04$ and $\varepsilon=0.08$ this is illustrated in Figures~\ref{fig:graphs}c and~\ref{fig:timeline}. For both parameters, transitions from negative to positive $y$-values and also from positive to negative $y$-values are possible. The difference between both parameters is that within $10^7$ iterations we observe just one transition from negative to positive values and no transition back for $\varepsilon=0.04$, whereas there are several such transitions in both directions for $\varepsilon=0.08$. We do not know whether these are B1- or B2- situations.
However, the lack of smoothness of the visible invariant graph and varying strong stable fibres seem to exclude B2-ii, so that $\nu(P)=1$ for any Gibbs measure $\nu$.
\end{compactenum}                   
\end{compactenum}
\begin{figure}
\begin{minipage}{0.95\textwidth}  
\includegraphics[width=0.5\textwidth,height=5cm,clip=true, trim=65mm 0mm 0mm 0mm]{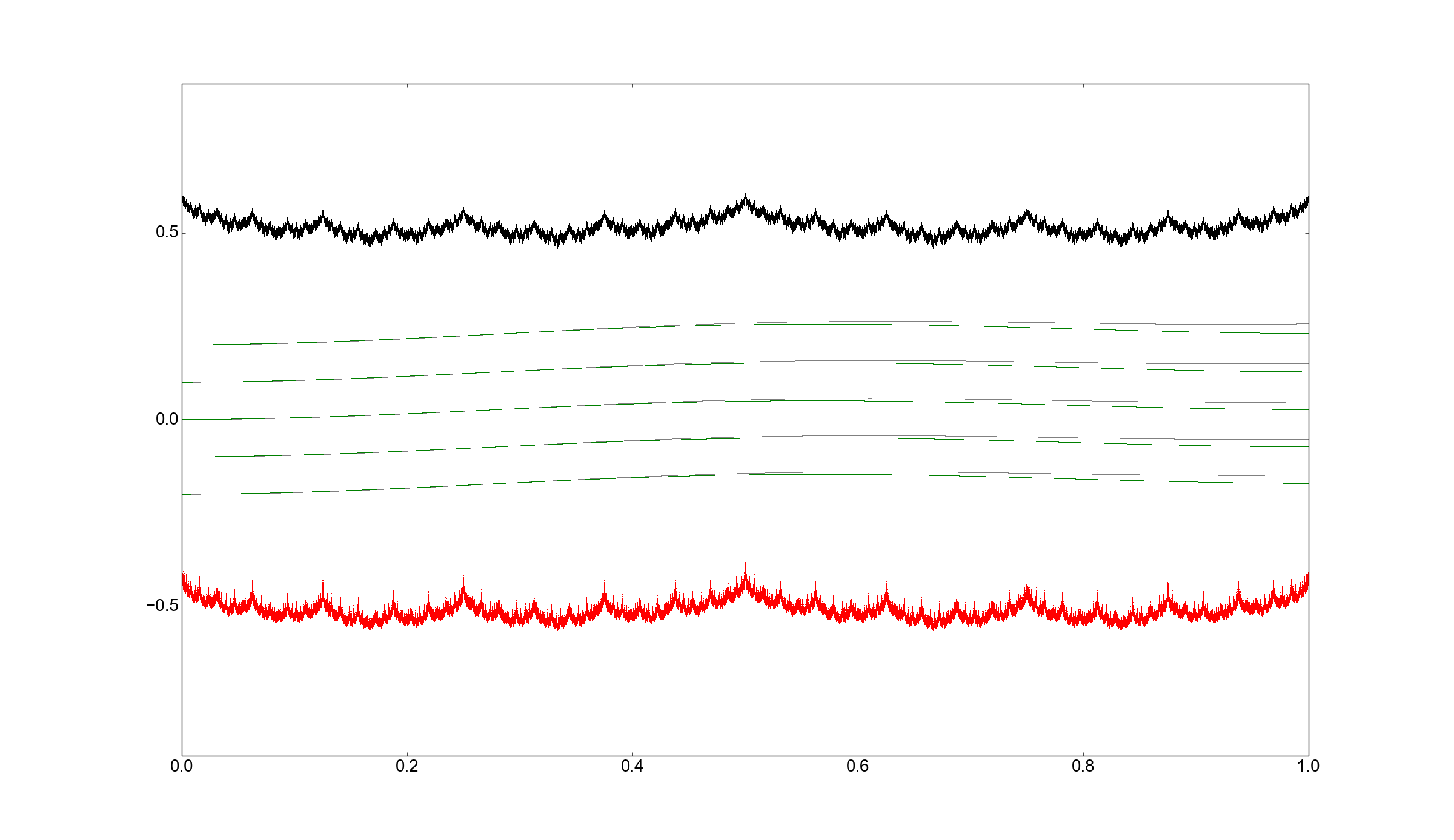}
\includegraphics[width=0.5\textwidth,height=5cm,clip=true, trim=0mm 10mm 0mm 3mm]{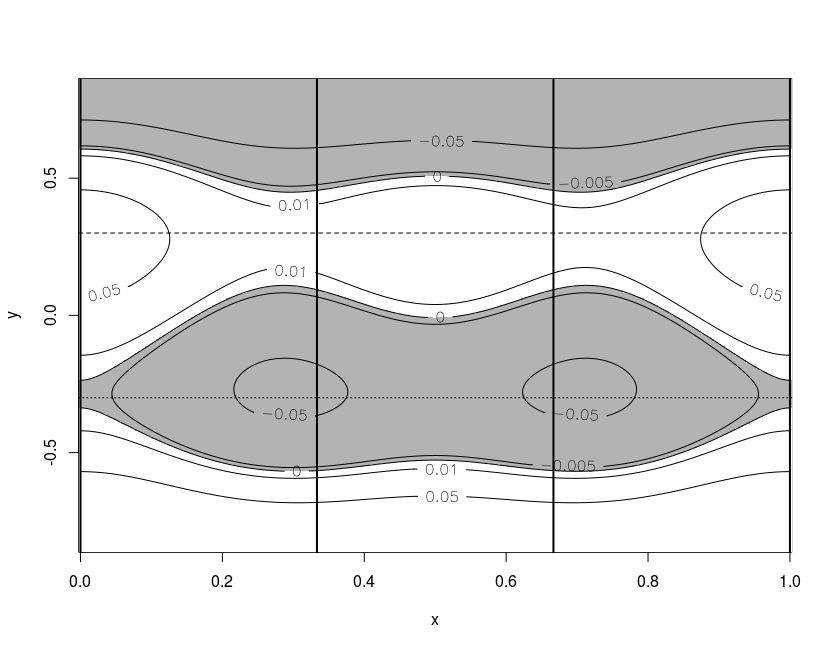}
\subcaption*{a) $\varepsilon =0.018$: The lhs plot shows two trajectories, one starting at $y=-1$ and one starting at $y=1$.
Each orbit starting above the dashed line (at $y_\varepsilon=0.3$) in the rhs plot will stay above it, and one starting below the dotted line (at $-y_\varepsilon=-0.3$) will stay below it. This is case~A.}
\end{minipage}
\begin{minipage}{0.95\textwidth}
\includegraphics[width=0.5\textwidth,height=5cm,clip=true, trim=65mm 0mm 0mm 0mm]{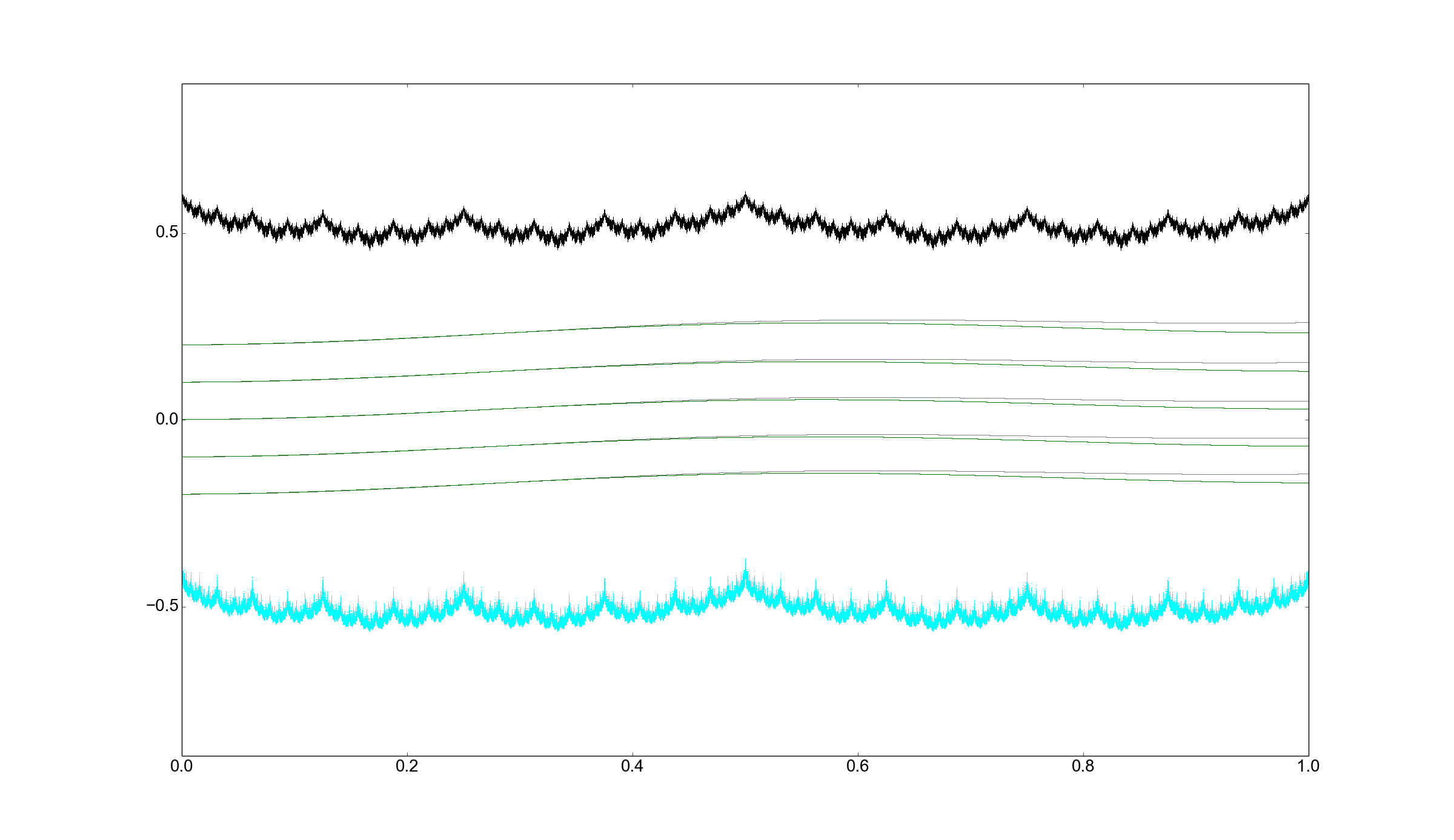}
\includegraphics[width=0.5\textwidth,height=5cm,clip=true, trim=0mm 10mm 0mm 3mm]{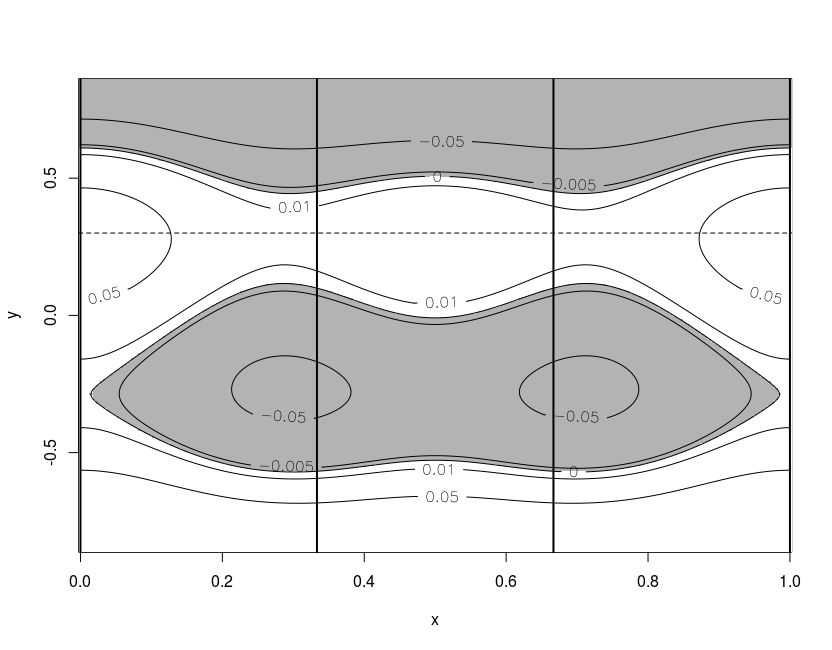}
\subcaption*{b) $\varepsilon =0.019$: The lhs plot shows two trajectories, one starting at $y=-1$ and one starting at $y=1$.
An orbit starting above the dashed line (at $y=0.3$) in the rhs plot will stay above this line forever.  As $\sup_{0\leqslant x\leqslant 1,|y|\geqslant0.3}(f_x\circ f_{\tau x})'(y)<0.99$, $\phi^+$ is continuous. The branch $f_0$ has a unique fixed point at $y_1 \approx 0.610$, whereas $f_{1/3}=f_{2/3}$ has three fixed points, the stable ones at $y_2 \approx -0.568$ and $y_3 \approx 0.451$. 
The orbits of all points whose $\xi$-coordinate is extremely close to $0$ approach $x=0$ and finally end up above the dashed line. The plotted orbit with negative $y$-values is transient -- after a much longer time it would cross to positive $y$-values. 
In view of the Corollary to theorem~\ref{theo:B-class}, this is case~B2-i.
}
\end{minipage}
\begin{minipage}{0.95\textwidth}
\includegraphics[width=0.5\textwidth,height=5cm,clip=true, trim=65mm 0mm 0mm 0mm]{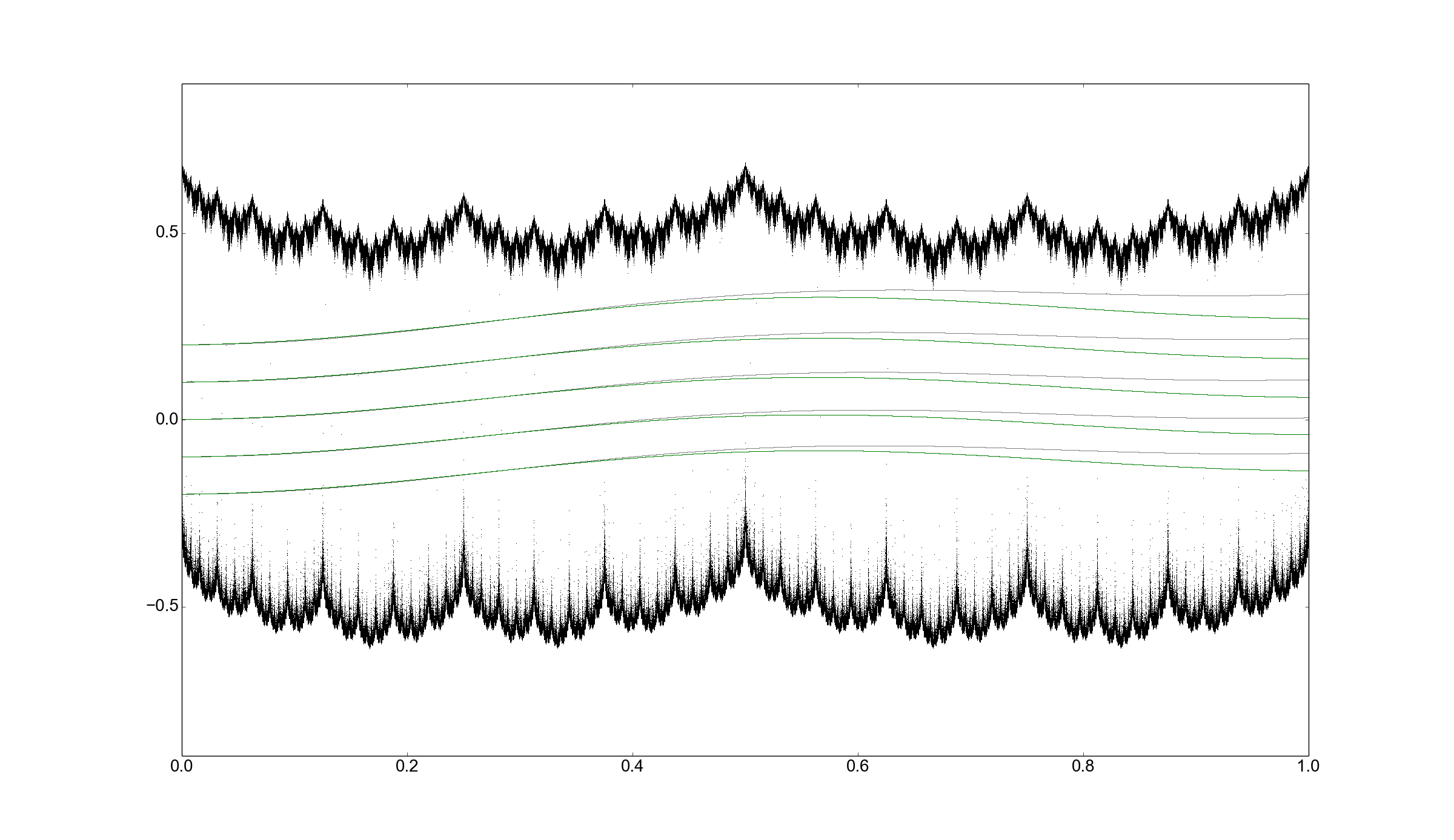}
\includegraphics[width=0.5\textwidth,height=5cm,clip=true, trim=0mm 10mm 0mm 3mm]{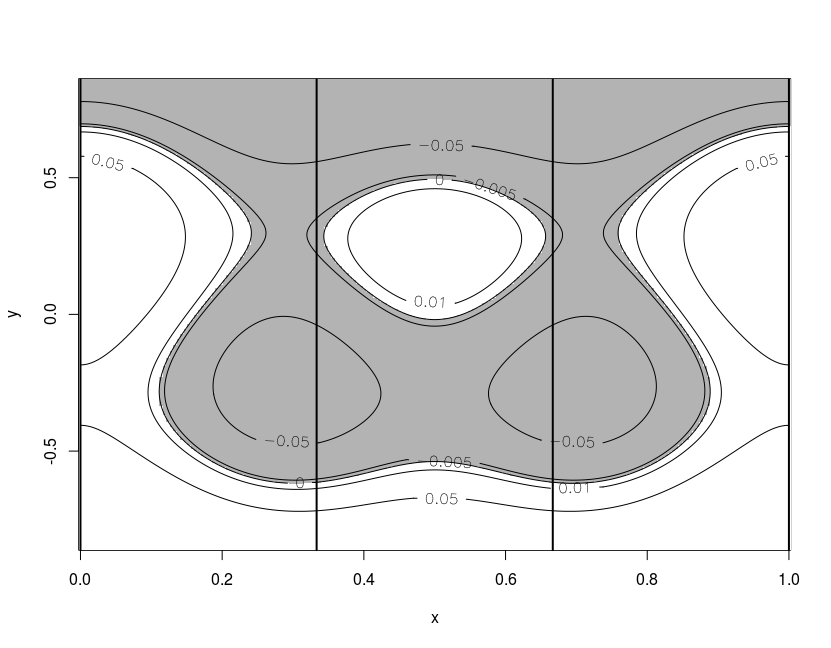}
\subcaption*{c) $\varepsilon =0.04$: 
The lhs plot shows a single trajectory that was started at $y=-1$, and which jumped to positive $y$-values after 972.858 iterations.
The branch $f_0$ has a unique fixed point at $y_1 \approx 0.687$ and $f_{1/3}=f_{2/3}$ have a unique fixed point at $y_2 \approx -0.614$. Hence $\phi^+(0)=\phi^-(0)=y_1>0$ and $\phi^-(1/3)=\phi^+(1/3)=y_2<0$. 
In particular $P\neq\emptyset$, so this is case B. The varying strong stable fibres exclude case~B2-ii, so that $m^2(P)=1$. Thus, this is either
case~B1 or~B2-i.}
\end{minipage}
\caption{The plots on the lhs represent trajectories of length $10^7$, from which short initial segments are discarded.
The smooth lines are 
numerically determined strong stable fibres
$\ellss_{(\xi,0.5,y)}$ for different values of $\xi$ and $y$. The gray regions in the level plots on the rhs consist of those points $(x,y)$, for which $f_x(f_{\tau(x)}(y))<y$. \label{fig:graphs}}
\end{figure}
\begin{figure}
\begin{minipage}{0.95\textwidth}
\includegraphics[width=0.5\textwidth,height=5cm,clip=true, trim=65mm 0mm 0mm 0mm]{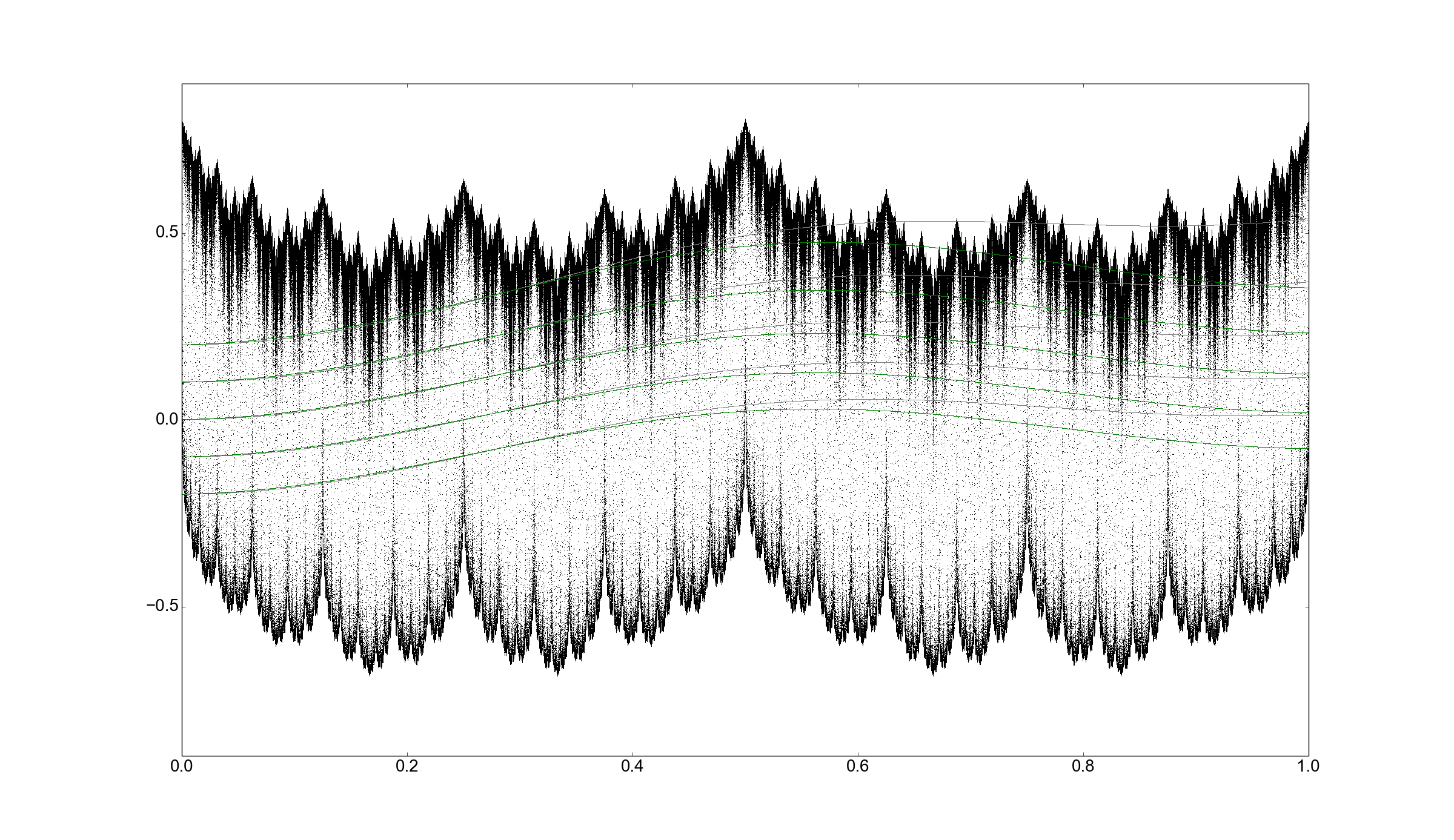}
\includegraphics[width=0.5\textwidth,height=5cm,clip=true, trim=45mm 0mm 50mm 0mm]{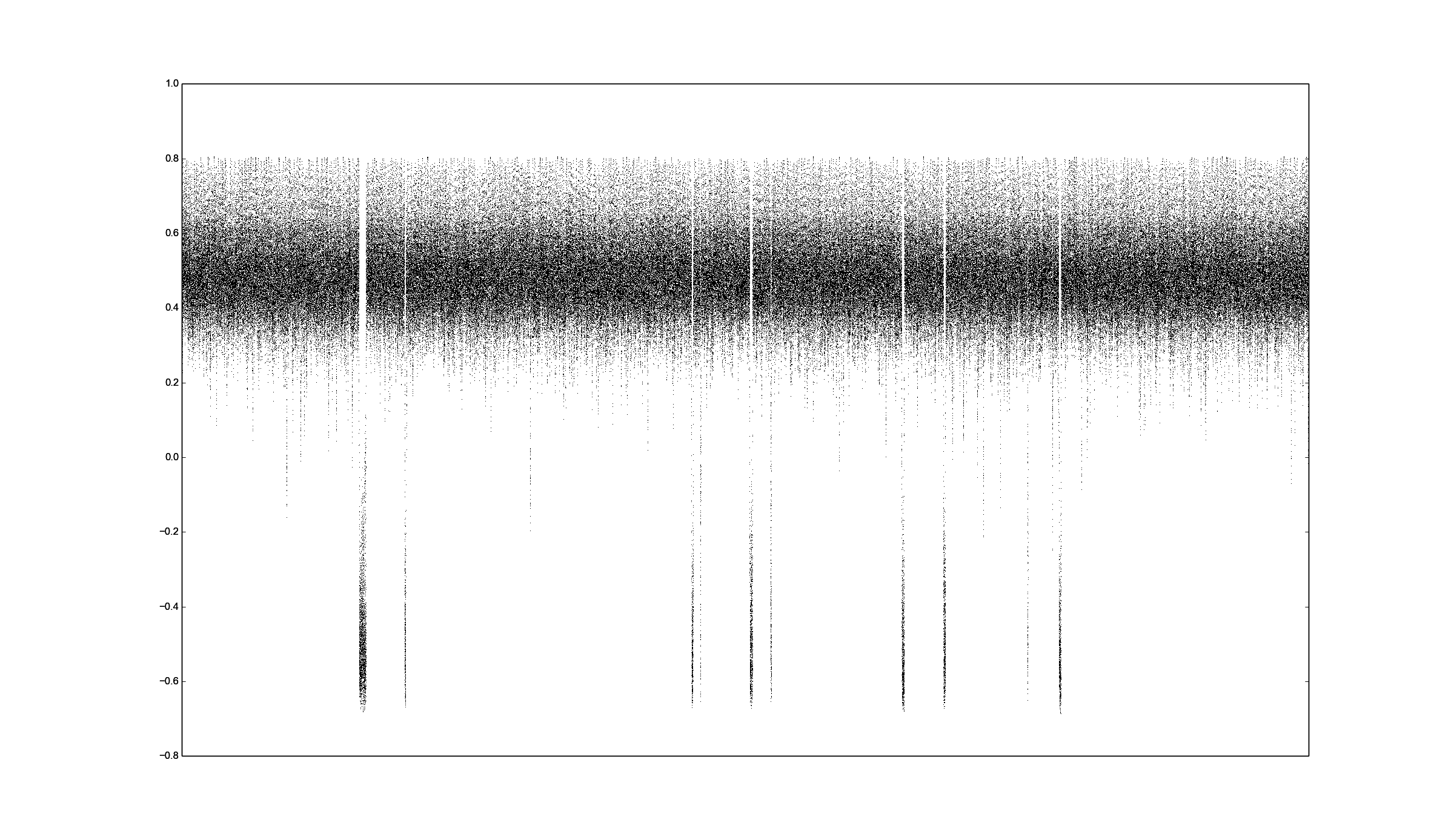}
\end{minipage}
\caption{$\varepsilon=0.08$: The situation is basically the same as for $\varepsilon=0.04$, but the funnels around the points $0$, $1/3$, $2/3$ and $1$, at which trajectories can cross from negative to postive $y$ values or vice versa, are broader, so that such crossings are observed more often. This is illustrated by the time series on the rhs. \label{fig:timeline}}
\end{figure}

\begin{remark}(Degenerate cases) \label{rem:degenerate} 
Consider any Gibbs measure $\nu\in\cE_T(\T^2)$, e.g. Lebesgue measure, and suppose $\nu(P)=0$. Cases A$_\nu2$) and
B2-ii)
are degenerate relative to $\nu$ in the sense that the graph of one of $\phi ^{+-}$, $\phi^{-+}$ and $\hat\phi$ coincides $\nu$-a.s. with some strong stable fibre $\gamma_\xi^*$, so that these fibres are identical for $\nu$-a.e.~$\xi$. In particular, there is an invariant graph of class $\CLip$, so that the dynamics are $\CLip$-conjugate to one where $f_x(0)=0$ for all $x$. Such systems are reasonably well understood, although some results have published proofs only for the more particular case of multiplicative forcing, see e.g. \cite{KeOt2012,Keller2014}. Theorem 2.2 of the thesis~
\cite{Otani2015}, however, applies with only minor modifications to case~B2 and allows to derive a variational formula for the Hausdorff dimension of the set $P=\{\varphi^-=\varphi^*=\varphi^+\}$ as in the subsequent theorem.
\end{remark}

\begin{theorem}\label{theo:dimension}
\begin{compactenum}[a)]
\item Suppose that $\inf_{\nu\in\cM_T(\T^2),\, \nu\, {\rm Gibbs}}\lambda^*(\nu)>0$. Then
case~A of Theorem~\ref{theo:top-class} applies, in particular $P=\emptyset$ and $\dimH(P)=0$.
\item
Suppose that $\inf_{\nu\in\cM_T(\T^2)}\lambda^*(\nu)<0$. Then case~B of Theorem~\ref{theo:top-class} applies, in particular $P$ is a dense $G_\delta$-set, and
\begin{equation}\label{eq:dim-formula}
\dimH\left(P\right)
=
1+\sup\left\{\frac{h_\nu(T)}{\int\log\tau'(x)\,d\nu(\xi,x)}: \nu\in\cM_T(\T^2), \lambda^*(\nu)\leqslant0\right\},
\end{equation}
where $h_\nu(T)$ denotes the Kolmogorov-Sinai entropy of $(T,\nu)$, which coincides with $h_{\nu^{(1)}}(\tau)$.
The supremum in (\ref{eq:dim-formula}) is attained for a unique Gibbs measure.
\end{compactenum}
\end{theorem}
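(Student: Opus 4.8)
The strategy is to use the sign of $\lambda^*$ to decide between cases~A and~B of Theorem~\ref{theo:top-class}, and then, in case~B, to reduce $\dimH(P)$ to a one-dimensional thermodynamic quantity. For part~a), assume $\inf\{\lambda^*(\nu):\nu\text{ Gibbs}\}>0$ and suppose, for contradiction, that case~B held. By Theorem~\ref{theo:B-class} there are two alternatives. If $\nu(P)=1$ for \emph{every} Gibbs measure $\nu$, then each such $\nu$ falls into case~(i) of Proposition~\ref{prop:classification-general}, so $\lambda^*(\nu)=\lambda^\pm(\nu)\leqslant0$, contradicting the hypothesis. Otherwise some Gibbs measure has $\nu(P)=0$, which forces case~B2-ii; there the invariant graph $\hat\phi$ is of class $\CLip$, so $g(x):=\log f_x'(\hat\phi(x))$ is H\"older on $\T$ and $\lambda^*(\nu)=\int g\,d\nu^{(2)}$ for every Gibbs $\nu$. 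Since the second marginals of Gibbs measures are weak-$*$ dense in $\cM_\tau(\I)$ and $\mu\mapsto\int g\,d\mu$ is continuous, $\min\{\int g\,d\mu:\mu\in\cM_\tau(\I)\}=\inf\{\lambda^*(\nu):\nu\text{ Gibbs}\}>0$; hence the Birkhoff sums $\sum_{j=1}^{k}g(\tau^j x)$ grow uniformly linearly in $k$, and by Koebe distortion (available since $\cS f_x<0$ and $f(\T^2\times J)\subseteq\T^2\times I^\circ$) the nested intervals $f_{\tau x}\circ\dots\circ f_{\tau^k x}(J)$ do not contract to points. Thus $\phi^-<\phi^+$ everywhere, $P=\emptyset$, contradicting case~B; so case~A holds, $P=\emptyset$ and $\dimH(P)=0$. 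For part~b), affineness of $\nu\mapsto\lambda^*(\nu)$ turns $\inf_{\nu\in\cM_T(\T^2)}\lambda^*(\nu)<0$ into the existence of an ergodic $\nu_1$ with $\lambda^*(\nu_1)<0$, which by the last sentence of Theorem~\ref{theo:top-class} excludes case~A; so case~B holds and $P=C^+\cap C^-$ is a dense $G_\delta$.

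\emph{Reduction of the dimension.} By Lemma~\ref{lem:phi_depend_x}, $P=\T\times P'$ with $P'=\{x\in\I:\phi^-(x)=\phi^+(x)\}$, and, $\T$ having coinciding Hausdorff and box dimension, $\dimH(P)=1+\dimH(P')$. For any $T$-invariant $\nu$ one has $h_\nu(T)=h_{\nu^{(1)}}(\tau)=h_{\nu^{(2)}}(\tau)$ (the second equality by time reversal of the baker map) and $\int\log\tau'(x)\,d\nu(\xi,x)=\int\log\tau'\,d\nu^{(2)}$; together with the exact dimensionality $\dimH(\nu^{(2)})=h_{\nu^{(2)}}(\tau)/\int\log\tau'\,d\nu^{(2)}$ of ergodic measures under the expanding map $\tau$, the supremum in \eqref{eq:dim-formula} equals $\sup\{\dimH(\nu^{(2)}):\nu\in\cM_T(\T^2),\ \lambda^*(\nu)\leqslant0\}$, so it remains to identify this number with $\dimH(P')$.

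\emph{The two-sided estimate.} If case~B1 or~B2-i holds, every Gibbs measure $\nu$---in particular Lebesgue $m^2$---has $\nu(P)=1$, so $P'$ carries full Lebesgue measure, $\dimH(P')=1$; since $h_\nu(T)\leqslant\int\log\tau'\,d\nu^{(2)}$ always (Ruelle's inequality) the supremum above is at most $1$ and is attained at $m^2$, which is admissible because $m^2(P)=1$ forces $\lambda^*(m^2)\leqslant0$. In case~B2-ii we work with the H\"older potential $g(x)=\log f_x'(\hat\phi(x))$, for which $\lambda^*(\nu)=\int g\,d\nu^{(2)}$ on ergodic $\nu$. For the lower bound, every ergodic $\nu$ with $\lambda^*(\nu)<0$ lies in case~(i) of Proposition~\ref{prop:classification-general}, so $\nu^{(2)}(P')=1$ and $\dimH(P')\geqslant\dimH(\nu^{(2)})$; the sharp lower bound, including extremal measures on the boundary $\lambda^*=0$, follows from the Moran-type construction of subsets of $P'$ in \cite[Theorem 2.2]{Otani2015}, which carries over to case~B2 as explained in Remark~\ref{rem:degenerate}. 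For the upper bound, $P'$ is forward $\tau$-invariant and on it $\hat\phi=\phi^-=\phi^+$, so for $x\in P'$ the intervals $f_{\tau x}\circ\dots\circ f_{\tau^k x}(J)$ decrease to $\{\phi^+(x)\}$, and Koebe distortion yields $\log\operatorname{diam}\bigl(f_{\tau x}\circ\dots\circ f_{\tau^k x}(J)\bigr)=\sum_{j=1}^{k}g(\tau^j x)+O(1)$, whence $\liminf_{n\to\infty}\frac1n\sum_{j=0}^{n-1}g(\tau^j x)\leqslant0$; thus $P'\subseteq E:=\{x\in\I:\liminf_{n\to\infty}\frac1n\sum_{j=0}^{n-1}g(\tau^j x)\leqslant0\}$. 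The multifractal analysis of Birkhoff averages for $\tau$ then gives $\dimH(E)=\sup\{h_\mu(\tau)/\int\log\tau'\,d\mu:\mu\in\cM_\tau(\I),\ \int g\,d\mu\leqslant0\}$, which equals the target supremum after lifting each admissible $\mu$ to some $\nu\in\cM_T(\T^2)$ with $\nu^{(2)}=\mu$.

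\emph{Uniqueness of the maximizer, and the main difficulty.} That the supremum is attained at a unique Gibbs measure is the standard output of thermodynamic duality: the extremal $\nu$ is the canonical lift of the equilibrium state over $\tau$ of the H\"older potential $-s_0\log\tau'+\beta_0 g$, where $s_0=\dimH(P')$ and $\beta_0\geqslant0$ is the Lagrange multiplier fixed by $P_\tau(-s_0\log\tau'+\beta_0 g)=0$ together with $\int g\,d\mu=0$ when $\beta_0>0$; equilibrium states of H\"older potentials over $\tau$ (which is conjugate to the full shift on two symbols) are unique. The hard step is the inclusion $P'\subseteq E$: passing from the qualitative pinching condition to the quantitative identity $\log\operatorname{diam}(f_{\tau x}\circ\dots\circ f_{\tau^k x}(J))=\sum_{j=1}^{k}g(\tau^j x)+O(1)$ requires careful control of the Koebe corrections along compositions that a priori contract only subexponentially, and the matching lower bound at the constraint boundary $\lambda^*=0$ leans on the delicate Moran construction of \cite{Otani2015}.
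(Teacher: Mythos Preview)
Your plan follows essentially the same route as the paper's proof: rule out case~B under the hypothesis of~(a) by contradiction via the $\CLip$ graph $\hat\phi$ and uniform positivity of Birkhoff sums of $g(x)=\log f_x'(\hat\phi(x))$; in~(b), exclude case~A via an ergodic measure with $\lambda^*<0$, split according to whether Lebesgue measure sees $P$, and in the non-trivial subcase reduce to a one-dimensional multifractal problem for the H\"older potential $g$ with the inclusion $P'\subseteq\{x:\liminf\frac1n\sum g(\tau^jx)\leqslant0\}$ providing the upper bound and the constructions of \cite{Otani2015} the lower one.

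Two places deserve a word of caution. First, the identity $\lambda^*(\nu)=\int g\,d\nu^{(2)}$ is asserted in Theorem~\ref{theo:B-class} only for \emph{Gibbs} $\nu$, not for all ergodic $\nu$; for a general invariant graph one only knows $\hat\lambda(\nu)\leqslant\lambda^*(\nu)$ (since $\hat\varphi$ coincides $\nu$-a.s.\ with one of $\varphi^-,\varphi^*,\varphi^+$ by Proposition~\ref{prop:classification-general}). The paper therefore first works with $\hat\lambda$, proves the dimension formula with the constraint $\hat\lambda(\nu)\leqslant0$, and then argues separately that the two suprema (over $\hat\lambda\leqslant0$ and over $\lambda^*\leqslant0$) coincide because the maximizer is Gibbs. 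Your lifting step ``each admissible $\mu$ lifts to $\nu$ with $\lambda^*(\nu)\leqslant0$'' is not automatic without this detour. Second, the Koebe claim $\log\operatorname{diam}(f_{\tau x}\circ\dots\circ f_{\tau^kx}(J))=\sum_{j=1}^k g(\tau^jx)+O(1)$ needs genuine care: the space condition in Koebe concerns the \emph{image}, and $h_k(J)$ need not have definite space inside $h_k(\R)$. The paper sidesteps this by invoking \cite[Lemma~2.69]{Otani2015} (and, in part~(a), the semi-uniform ergodic theorem of \cite{Sturman2000}) rather than a bare Koebe estimate; if you want a self-contained argument you will have to organise the distortion control more carefully than a one-line appeal to Koebe.
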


At present we have to leave open what values $\dimH(P)$ can attain when 
$\inf_{\nu\in\cM_T(\T^2)}\lambda^*(\nu)=\inf_{\nu\in\cM_T(\T^2),\, \nu\, {\rm Gibbs}}\lambda^*(\nu)=0$.

The proofs of all theorems are provided in Section~\ref{sec:main-proofs}.

\begin{remark}(Intermingled basins)
Nearly the same model as in this note was studied in \cite{Keller2015}, where the focus was on quantifying the phenomenon of intermingled basins using thermodynamic formalism. The formal difference to the present setting is that $f_x(I)=I$ is assumed in \cite{Keller2015} (so that the end points of $I$ describe constant invariant graphs given \emph{a priori}), while here we require $f_x(I)\subseteq I ^\circ$ (Hypothesis~\ref{hyp:weak-contraction}). 
Extending the fibres and fibre maps in the examples from \cite{Keller2015} slightly, puts us in the context of the present paper.
The result is that $\varphi^+$ from \cite{Keller2015} is the lower bounding graph of our $\widetilde{\cG\varphi^+}$, and similar for $\varphi^-$. 

Having this in mind, it is obvious that all examples from \cite{Keller2015} belong to our class A). Indeed, Example~2.3 and Figure~1 of \cite{Keller2015} illustrate situations, where all invariant measures lead to case~A1 (parameters $a\geqslant2$), and also 
examples, where Lebesgue measure leads to~A1, but where there are other invariant Gibbs measures\footnote{The measures $\nu^-$ and $\nu^+$ from \cite[Example 2.3]{Keller2015} with positive Lyapunov exponent on one of the constant invariant graphs
are equidistributions on periodic points, but in view of the variational principle there exist also Gibbs measures, for which the corresponding exponents are positive.} leading to case~A2 (parameters $a\leqslant1.2$).
\end{remark}

\section{Proofs of the main results}\label{sec:main-proofs}

\begin{proof}[Proof of Theorem~\ref{theo:top-class}]
As $\varphi^\pm(\xi,x)=\phi^\pm(x)$, 
the set $\cG\varphi^\pm$ can be written as $\T\times\cG\phi^\pm$  so that
$\overline{\cG\varphi^\pm}=\T\times\overline{\cG\phi^\pm}$. Therefore $\overline{\cG\varphi^\pm}$ is forward invariant under $f$, although the base map $T$ is not a homeomorphism: Indeed, let $(\xi,x,y)\in \overline{\cG\varphi^\pm}$. Then there are
$x_n$ $(n\in\N)$ such that $x_n\to x$ and $\varphi^\pm(\xi,x_n)\to y$. As $x'\mapsto T(\xi,x')$ is continuous for fixed $\xi$, it follows that
$f(\xi,x,y)=(T(\xi,x),f_{x}(y))=\lim_{n\to\infty}(T(\xi,x_n),f_{x_n}(\varphi^\pm(\xi,x_n))=\lim_{n\to\infty}(T(\xi,x_n),\varphi^\pm(T(\xi,x_n))\in\overline{\cG\varphi^\pm}$. The forward $f$-invariance of
$\widetilde{\cG\varphi^\pm}$ is an immediate consequence.

Let $\cD:=\widetilde{\cG\varphi^+}\cap\widetilde{\cG\varphi^-}$ and suppose that $\cD\neq\emptyset$. Then $\cD$ is a non-empty, closed, filled-in, forward $f$-invariant subset of $\T^2\times I$, so that $\pi(\cD)$ is a non-empty, closed, forward $T$-invariant subset of $\T^2$. As $\widetilde{\cG\varphi^\pm}=\T\times\widetilde{\cG\phi^\pm}$, the set $\pi(\cD)$ can be written as $\T\times D$. The $T$-forward invariance of this set implies it is dense in $\T^2$, and so $\pi(\cD)=\T^2$.

The inclusion $P\subseteq C^+\cap C^-$ follows at once from the semi-continuity of $\varphi^\pm$. For the reverse inclusion let $(\xi,x)\in C^+\cap C^-$. As $\pi(\cD)=\T^2$, there is $y\in I$ such that $(\xi,x,y)\in \cD$.
Hence there are $y^-,y^+\in I$, $y^+\leqslant y\leqslant y^-$, such that $(\xi,x,y^\pm)\in\overline{\cG\varphi^\pm}$.
 As $\varphi^\pm$ do not depend on $\xi$, there are $x_n^+,x_n^-\in\T$ such that
$x_n^\pm\to x$ and $\varphi^\pm(\xi,x_n^\pm)\to y^\pm$. As $(\xi,x)\in C^+\cap C^-$ we conclude that $\varphi^+(\xi,x)=\lim_{n\to\infty}\varphi^+(\xi,x_n^+)=y^+\leqslant y\leqslant y^-=\lim_{n\to\infty}\varphi^-(\xi,x_n^-)=\varphi^-(\xi,x)$, i.e. $(\xi,x)\in P$.

We turn to the assertions on the set $\cP=\overline{\cG\varphi^\pm|_P}$. As $\varphi^+,\varphi^-$ are invariant graphs, the set $P$ is forward $T$-invariant so that the graph $\cG\varphi^\pm|_P$ is forward $f$-invariant. From this, the forward $f$-invariance of $\cP$ follows as above by choosing the points $x_n$ such that $(\xi,x_n)\in P$. Finally, $\pi(\cP)=\T^2$ is proved in the same way as $\pi(\cD)=\T^2$ above.

It remains to prove that case~A is excluded if $\lambda^*(\nu)<0$ for some $\nu\in\cE_T(\T^2)$. Indeed, Proposition~\ref{prop:classification-general} shows that 
$\varphi^+=\varphi^*=\varphi^-$ $\nu$-a.e., and this is incompatible with A.
\end{proof}

For the proofs of the further theorems we need several preparatory lemmas.
The first one states formally the well-known $f$-invariance of the family of strong stable fibres. We skip its proof.
\begin{lemma}	\label{lem:lss_invariant}
\[
	f _{(\xi,u)} ^n \left( \ellss _{(\xi, x, y)} (u) \right) = \ellss _{f ^n (\xi, x, y)} \left( \pi _2 \circ T ^n (\xi, u ) \right)
\]
for all $ u \in \dom _{(\xi, x, y)} $, $ (\xi, x, y) \in \I ^2 \times I $ and $ n \in \N$, where $ \pi _2 $ is the projection on the second coordinate.
\end{lemma}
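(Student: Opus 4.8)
The plan is to prove Lemma~\ref{lem:lss_invariant} by showing that the right-hand side, as a function of $u$, solves the same initial value problem (\ref{eq:IVP}) that defines the left-hand side, and then invoke uniqueness from the Picard-Lindelöf theorem. Concretely, fix $(\xi,x,y)\in\I^2\times I$ and $n\in\N$, set $(\xi',x',y')=f^n(\xi,x,y)$ (so $x'=\pi_2(T^n(\xi,x))$ and $y'=f_{(\xi,x)}^n(y)$), and define $g(u):=f_{(\xi,u)}^n(\ellss_{(\xi,x,y)}(u))$ and $h(u):=\ellss_{f^n(\xi,x,y)}(\pi_2(T^n(\xi,u)))$ on the interval where both sides make sense. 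The goal is to check $g=h$.

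First I would observe that the base map $T$ acts on the $\xi$-fibre through the affine expanding map $u\mapsto\pi_2(T^n(\xi,u))$; call it $\kappa(u)$, which is piecewise affine with slope $(\tau^n)'$ locally (using the product structure of $T$ from the definition of the generalized baker map, and that for fixed $\xi$ the second coordinate is mapped by a composition of the affine branches $x\mapsto ax$ or $x\mapsto a+(1-a)x$). Then $h(u)=\ellss_{(\xi',x',y')}(\kappa(u))$, and by the chain rule together with the defining ODE for $\ellss_{(\xi',x',y')}$ we get $h'(u)=\kappa'(u)\cdot X_3(\xi',\kappa(u),h(u))$. On the other hand, differentiating $g$ requires the derivative of $(\xi,x,y)\mapsto f_{(\xi,u)}^n(\cdot)$ with respect to $u$; using the block-triangular form (\ref{eq:DF}) of $Df$ and the $Df$-equivariance $Df\cdot X=\sigma\cdot(X\circ f)$ from Lemma~\ref{lemma:shortX3} (note $\sigma=1/\tau'$, so $\sigma^{-n}=(\tau^n)'=\kappa'$ locally), one finds that the tangent vector $(0,1,(\ellss_{(\xi,x,y)})'(u))=(0,1,X_3(\xi,u,\ellss_{(\xi,x,y)}(u)))$ at $(\xi,u,\ellss_{(\xi,x,y)}(u))$ is pushed forward by $Df^n$ to a vector proportional to $X$ at the image point, with the proportionality constant being exactly $\sigma^{-n}=\kappa'(u)$; reading off the third component gives $g'(u)=\kappa'(u)\cdot X_3(f^n(\xi,u,\ellss_{(\xi,x,y)}(u)))$, and since $f^n(\xi,u,\ellss_{(\xi,x,y)}(u))=(\xi',\kappa(u),g(u))$ this is $g'(u)=\kappa'(u)\cdot X_3(\xi',\kappa(u),g(u))$, the same ODE as for $h$.

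It remains to match initial conditions: at $u=x$ we have $\kappa(x)=x'$, so $h(x)=\ellss_{(\xi',x',y')}(x')=y'$, while $g(x)=f_{(\xi,x)}^n(\ellss_{(\xi,x,y)}(x))=f_{(\xi,x)}^n(y)=y'$ as well. Thus $g$ and $h$ are two $C^{1}$ solutions of the same IVP; since $X_3$ is Lipschitz in $y$ uniformly (by (\ref{eq:X3deriv-y-bound})), Picard-Lindelöf gives $g\equiv h$ on the common domain, which after unwinding the change of variables $\kappa$ is precisely the maximal interval $\dom_{(\xi,x,y)}$ (equivalently, $\dom_{f^n(\xi,x,y)}$ is $\kappa(\dom_{(\xi,x,y)})$ by the same equivariance argument applied to the maximality of the solution). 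This yields the claimed identity for all $u\in\dom_{(\xi,x,y)}$.

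The main obstacle I anticipate is the bookkeeping around the discontinuity of $T$ and the piecewise-affine nature of $\kappa$: one must check that the relevant composition of branches and the corresponding product of slopes $\sigma(\xi)\cdots\sigma(\tau^{n-1}\xi)$ genuinely equals $1/\kappa'(u)$ on each subinterval, and that $\ellss_{(\xi,x,y)}$ stays in $J$ (where $X_3$ and the equivariance are defined) along the way — this is guaranteed by the forward invariance $f(\T^2\times J)\subseteq\T^2\times I^\circ$ from Hypothesis~\ref{hyp:weak-contraction} and the fact that strong stable fibres take values in $J$ by construction. Since the paper explicitly says the proof is skipped, a short remark that this is a routine consequence of the $Df$-equivariance of $X$ together with uniqueness for (\ref{eq:IVP}) would in fact suffice.
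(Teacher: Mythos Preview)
The paper explicitly skips the proof of this lemma, so there is nothing to compare against; your approach via the $Df$-equivariance of $X$ together with Picard--Lindel\"of uniqueness for the IVP~(\ref{eq:IVP}) is the natural one and is correct in outline.

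Two small corrections are worth making. First, the proportionality constant in $Df^n\cdot X=\sigma^n\cdot(X\circ f^n)$ is $\sigma^n$, not $\sigma^{-n}$; and for fixed $\xi$ the map $\kappa(u)=\pi_2(T^n(\xi,u))$ has slope $\kappa'(u)=\sigma(\xi)\sigma(\tau\xi)\cdots\sigma(\tau^{n-1}\xi)=\sigma^n(\xi)$, not $(\tau^n)'$. You made both errors, and they cancel, so your final ODE $g'(u)=\kappa'(u)\,X_3(\xi',\kappa(u),g(u))$ is right; but the intermediate statements should be fixed. Second, your worry about discontinuities of $T$ and a ``piecewise-affine'' $\kappa$ is unfounded: for fixed $\xi$, the branch of the second-coordinate map of $T$ is determined by $\xi$ alone (not by $u$), so $\kappa$ is a single affine map on all of $\I$ with constant slope $\sigma^n(\xi)$. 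The only remaining domain check is that $g(u)=f_{(\xi,u)}^n(\ellss_{(\xi,x,y)}(u))\in I^\circ$ for all $u\in\dom_{(\xi,x,y)}$ (by Hypothesis~\ref{hyp:weak-contraction}), which guarantees that the solution $\ellss_{(\xi',x',y')}$ extends to $\kappa(\dom_{(\xi,x,y)})$ and hence that $h$ is defined there.
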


\begin{lemma}	\label{lem:lss_domain}
There exists a constant $ \delta > 0 $ such that $ ( x - \delta , x + \delta ) \cap \I  \subseteq \dom _{(\xi, x, y)} $ for all $ (\xi, x, y) \in  \I  ^2 \times I $.
\end{lemma}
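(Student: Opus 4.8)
The plan is to derive the uniform lower bound $\delta$ from the two global bounds in Lemma~\ref{lemma:shortX3}, namely $\sup|X_3|<\infty$ (from \eqref{eq:X3deriv-y-bound} together with boundedness of $X$, or directly from the construction of $X_3$) and $L:=\sup|\partial X_3/\partial y|<\infty$. The point is that the solution of the IVP \eqref{eq:IVP} can only fail to extend if it runs out of the strip $\T^2\times J$, and since $I\subseteq J^\circ$ there is a positive ``safety margin'' $d:=\operatorname{dist}(I,\partial J)>0$ between the initial condition $y\in I$ and the boundary of $J$.

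First I would set $B:=\sup_{\T^2\times J}|X_3|$ (finite by Lemma~\ref{lemma:shortX3}). As long as the solution $\ellss_{(\xi,x,y)}$ stays in $J$, its derivative is bounded in absolute value by $B$, so $|\ellss_{(\xi,x,y)}(u)-y|\leqslant B\,|u-x|$. Hence for $|u-x|<d/B$ one has $\ellss_{(\xi,x,y)}(u)\in J^\circ$, i.e. the solution cannot reach $\partial J$ while $u$ stays within distance $d/B$ of $x$. Combined with the Picard--Lindelöf theorem — applicable by \eqref{eq:X3deriv-y-bound}, which gives a uniform Lipschitz constant $L$ in the $y$-variable for $X_3$ restricted to $\T^2\times J$, so that solutions are unique and extend as long as they remain in the (compact) strip — this shows the maximal solution is defined on all of $(x-\delta,x+\delta)\cap\I$ with $\delta:=d/B$. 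The uniformity in $(\xi,x,y)$ is automatic since neither $B$, $L$ nor $d$ depends on the initial point.

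The only mild subtlety, and the step I would be most careful about, is the interaction with the boundary of $\I$: since $\dom_{(\xi,x,y)}\subseteq\I$ by definition, one is solving the ODE on a closed interval and must note that the estimate $|\ellss(u)-y|\leqslant B|u-x|$ is all that is needed to keep the solution inside $J^\circ$ for $u\in(x-\delta,x+\delta)\cap\I$; no extension past the endpoints $0$ or $1$ of $\I$ is claimed, so there is no issue there. One should also record that $B<\infty$: this follows either from the explicit formula \eqref{eq:X_3} for $X_3$ referred to in Lemma~\ref{lemma:shortX3} or from the stated boundedness of the subbundle $X$ together with $X_1=0,X_2=1$. With $\delta=d/B>0$ fixed, the claim $(x-\delta,x+\delta)\cap\I\subseteq\dom_{(\xi,x,y)}$ holds for every $(\xi,x,y)\in\I^2\times I$, which is exactly the assertion of the lemma.
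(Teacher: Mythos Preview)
Your proof is correct and follows essentially the same route as the paper: bound $|X_3|$ uniformly on $\T^2\times J$, use this to show that the solution starting in $I$ cannot leave $J$ while $|u-x|<\operatorname{dist}(I,\partial J)/\sup|X_3|$, and invoke Picard--Lindel\"of to conclude. The paper's notation differs (it writes $L$ for your $B$ and $\varepsilon_0$ for your $d$), but the argument is identical.
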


\begin{proof}
Let $ I = [s, t] $ and $J = [s',t']$ be the intervals of Hypothesis \ref{hyp:weak-contraction}.
By Lemma \ref{lemma:shortX3},
\[
	L := \sup _{(\xi, x,y) \in \I ^2 \times J} | X _3 (\xi, x, y) | < \infty .
\]
Let $ \varepsilon _0 := \min\{ s-s', t'-t\} > 0 $ and consider a triple
$ ( \xi, x , y ) \in \I  ^2 \times I $.
By the mean value theorem, for $ u \in \dom _{(\xi, x, y)} $, we have
\[
	 \ellss _{(\xi, x, y)} (u) \leqslant y + 	 \sup _{\tilde{u} \in \dom _{(\xi, x, y)}} 	 \left| ( \ellss _{(\xi, x, y)} ) ' (\tilde{u})  \right| \, | u - x |  \leqslant t + L | u - x | ,
\]
and similarly $ \ellss _{(\xi, x, y)} (u) \geqslant s - L | u - x | $.
Let $ \delta := \varepsilon _0 / L $. Then
\[
	\ellss _{(\xi, x, y)} (u) \in J
\]
for all $ u \in \dom _{(\xi, x, y)} \cap ( x - \delta  , x + \delta  ) $.
Hence, the Picard-Lindelöf theorem guarantees that $\ellss_{(\xi,x,y)}(u)$, the solution of the initial value problem (\ref{eq:IVP}) extends at least to the interval $(x-\delta,x+\delta)$,
i.e. $ ( x - \delta , x + \delta ) \subseteq \dom _{(\xi, x, y)}  $.
\end{proof}

\begin{lemma}\label{lem:lss_lyapunov}
$\lambda _{(\xi, x, y)} = \lambda _{(\xi, u, \ellss _{(\xi, x, y)} (u) )}$
for all $ u \in \dom _{(\xi, x, y)} $ and $ (\xi, x, y) \in \I ^2 \times I $.
In other words: $\lambda_{(\xi,x,y)}$ is constant along strong stable fibres.
\end{lemma}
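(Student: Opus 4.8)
The plan is to compare the forward orbits of the two points $(\xi,x,y)$ and $(\xi,u,\ellss_{(\xi,x,y)}(u))$ fibre by fibre, and to show that the ratio of the iterated fibre derivatives stays bounded away from $0$ and $\infty$. The first step is to unwind the definitions: writing $z:=\ellss_{(\xi,x,y)}(u)$ and using Lemma~\ref{lem:lss_invariant}, for every $n\in\N$ one has $f^n(\xi,x,y)=(T^n(\xi,x),f_x^n(y))$ and $f^n(\xi,u,z)=(T^n(\xi,u),f_u^n(z))$; moreover, by the cocycle identity for strong stable fibres the point $f_u^n(z)$ lies on the strong stable fibre through $f_x^n(y)$, i.e. $f_u^n(z)=\ellss_{f^n(\xi,x,y)}\!\left(\pi_2 T^n(\xi,u)\right)$. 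So both orbits live in the $\xi$-plane after the first coordinate is iterated, and the slow (second) coordinates $\pi_2 T^n(\xi,x)$ and $\pi_2 T^n(\xi,u)$ get exponentially close at rate $\sigma$, since $\tau$ expands and the baker map contracts the $x$-coordinate by exactly the factor $\sigma(\pi_1 T^k(\xi,\cdot))$ at each step — indeed the $x$-coordinates of $T^n(\xi,x)$ and $T^n(\xi,u)$ differ by $|u-x|\prod_{k=0}^{n-1}\sigma(\pi_1 T^k\xi)$ (note the $x$-contraction factor depends only on $\xi$, not on $x$ or $u$).

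The second step is to estimate $\log(f_x^n)'(y)-\log(f_u^n)'(z)$ by telescoping. Write
\begin{equation*}
\log(f_x^n)'(y)-\log(f_u^n)'(z)
=\sum_{k=0}^{n-1}\left[\log f_{\pi_2 T^k(\xi,x)}'\!\left(f_x^k(y)\right)-\log f_{\pi_2 T^k(\xi,u)}'\!\left(f_u^k(z)\right)\right].
\end{equation*}
Each summand is controlled by the $C^1$-norm of $(v,w)\mapsto\log f_v'(w)$ on $\T\times J$ — which is finite because $f$ is $C^2$ on $\T\times J$ (Hypothesis~\ref{hyp:regularity}) and $f_v'>0$ is bounded below on the compact set $\T\times J$ — times the sum of $|\pi_2 T^k(\xi,x)-\pi_2 T^k(\xi,u)|$ and $|f_x^k(y)-f_u^k(z)|$. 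The first of these is $|u-x|\prod_{j<k}\sigma(\pi_1 T^j\xi)$, which is summable in $k$ with bound independent of $n$ because $\sigma\leqslant\max\{a,1-a\}<1$. The second, $|f_x^k(y)-f_u^k(z)|$, is the vertical gap along the strong stable fibre at level $k$; here one invokes that the fibre stays in $J$ (so all quantities are defined) and uses the definition of $X$ as a \emph{strong stable} subbundle: by Lemma~\ref{lemma:shortX3}, $Df\cdot X=\sigma\cdot(X\circ f)$ contracts tangent vectors along the fibre at rate $\sigma$, and since $X_3$ has bounded $y$-derivative (\eqref{eq:X3deriv-y-bound}), the fibre is Lipschitz in the base variable, so the vertical gap is comparable to the horizontal gap $|\pi_2 T^k(\xi,x)-\pi_2 T^k(\xi,u)|$ and hence also decays geometrically and is summable. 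Consequently $\bigl|\log(f_x^n)'(y)-\log(f_u^n)'(z)\bigr|$ is bounded by a constant independent of $n$, uniformly over $(\xi,x,y)\in\I^2\times I$ and $u\in\dom_{(\xi,x,y)}$.

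The third step is the conclusion: dividing the uniform bound by $n$ and letting $n\to\infty$ (through $\limsup$, as in \eqref{eq:lyap-def}) gives $\lambda_{(\xi,u,z)}=\lambda_{(\xi,x,y)}$. The main obstacle is making rigorous the geometric decay of the vertical gap $|f_x^k(y)-f_u^k(z)|$ along the fibre while ensuring the whole fibre segment remains inside $J$ for all time — this is exactly where the strong stable bundle property $Df\cdot X=\sigma\cdot(X\circ f)$ and the bound \eqref{eq:X3deriv-y-bound} do the work; once one knows the fibre stays in $J$, Gronwall's inequality applied to the ODE \eqref{eq:IVP} for the two solutions, combined with the fibre contraction under $f$, closes the estimate. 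I would either argue directly that strong stable fibres through points whose orbit stays in $\T^2\times J$ remain in $J$ for all forward times (using Lemma~\ref{lem:lss_invariant} and Hypothesis~\ref{hyp:weak-contraction}), or, if that requires $\dom_{(\xi,x,y)}=\I$, restrict first to the generic points where this holds and note the statement is only used in that form later. A clean alternative for the whole lemma: directly apply Gronwall to the difference of the two ODE solutions on $\dom_{(\xi,x,y)}$ to get $|\ellss_{(\xi,x,y)}(u)-y|\leqslant L|u-x|$ with $L$ from \eqref{eq:X3deriv-y-bound}, push forward by $f^n$ using Lemma~\ref{lem:lss_invariant} together with the contraction $Df\cdot X=\sigma(X\circ f)$ to get the $k$-th vertical gap $\leqslant L|u-x|\prod_{j<k}\sigma$, and then run the telescoping sum above.
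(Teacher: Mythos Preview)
Your approach is correct and essentially the same as the paper's; in fact your ``clean alternative'' at the end is almost exactly the paper's proof: use Lemma~\ref{lem:lss_invariant} to write $f^k_{(\xi,u)}(z)=\ellss_{f^k(\xi,x,y)}(\pi_2 T^k(\xi,u))$ and $f^k_{(\xi,x)}(y)=\ellss_{f^k(\xi,x,y)}(\pi_2 T^k(\xi,x))$, then apply the mean value theorem with $L:=\sup|X_3|$ to bound the vertical gap by $L$ times the horizontal gap, which contracts geometrically. Two small corrections: the relevant constant is $L=\sup_{\I^2\times J}|X_3|<\infty$ from Lemma~\ref{lemma:shortX3}, not the bound \eqref{eq:X3deriv-y-bound} on $\partial X_3/\partial y$ (that bound is what makes Picard--Lindel\"of apply, not what gives the Lipschitz constant of $\ellss$ in the base variable); and your concern about the fibre segment staying in $J$ for all forward times is already absorbed into the statement of Lemma~\ref{lem:lss_invariant}, so no separate Gronwall or equivariance argument is needed. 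The paper then concludes by a Ces\`aro argument (orbit distance $\to 0$ $\Rightarrow$ termwise difference of $\log f'$ $\to 0$ $\Rightarrow$ averaged difference $\to 0$), whereas you obtain the slightly stronger statement that the difference of the Birkhoff sums is uniformly bounded in $n$; either way the $\limsup$'s coincide.
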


\begin{proof}
Let $ (\xi, x, y ) \in \I  ^2 \times I $ and $ u \in \dom _{(\xi, x, y )} $.
By Lemma \ref{lem:lss_invariant}, we have
\[
	f _{(\xi,u)} ^n \left( \ellss _{(\xi, x, y)} (u) \right) = \ellss _{f ^n (\xi, x, y)} \left( \pi _2 \circ T ^n (\xi, u ) \right)
\]
for each $ n $, where $ \pi _2 $ is the projection on the second coordinate.
Observe also that $ f ^n (\xi, x, y) \in \I ^2 \times J $.
Then $\left| (  \ellss _{f ^n (\xi, x, y)} ) ' \left( \pi _2 \circ T ^n (\xi, u ) \right) \right| \leqslant L  $, so that
\begin{eqnarray*}
	\left| f _{(\xi,u)} ^n \left( \ellss _{(\xi, x, y)} (u) \right) - f _{(\xi,x)} ^n ( y ) \right|	 &  = &\left| f _{(\xi,u)} ^n \left( \ellss _{(\xi, x, y)} (u) \right) -  f _{(\xi,x)} ^n \left( \ellss _{(\xi, x, y)} (x) \right) \right| \\
	 &  = &\left|  \ellss _{f ^n (\xi, x, y)} \left( \pi _2 \circ T ^n (\xi, u ) \right) - \ellss _{f ^n (\xi, x, y)} \left( \pi _2 \circ T ^n (\xi, x ) \right)   \right| \\
		& \leqslant &	L \;  \left| \pi _2 \circ T ^n (\xi, x ) - \pi _2 \circ T ^n (\xi, u ) \right|		\stackrel{n \rightarrow \infty}{\longrightarrow} 0  .
\end{eqnarray*}
Observing the continuity of $(x,y)\mapsto\log f_{x}'(y)$, the claim follows from
\begin{equation*}
\log (f_{(\xi,x)}^n)'(y)
=\sum_{j=0}^{n-1}f'_{\pi_2\circ T^j(\xi,x)}(f_{(\xi,x)}^j(y))\ .
\end{equation*}
\end{proof}

\begin{lemma}	\label{lem:private_discussion}
Let $ \nu \in \cM _\tau ( \T ^2 ) $ be a Gibbs measure with $\nu(P)=0$.
Then  $ \dom _{(\theta, \varphi ^\ast  (\theta)) } = \I $ for $ \nu  $-a.a.~$ \theta $,
\begin{equation}\label{eq:product-measure}
	( \nu  \otimes \nu ^{(2)} ) \left\{ ( (\xi ,x ), u) : \varphi ^\ast ( \xi , u) = \ell ^{ss} _{(\xi, x, \varphi ^\ast (\xi, x ))} ( u )  \right\} = 1\,,
\end{equation}
and $\phi ^-\leqslant\ellss _{(\theta, \varphi ^\ast (\theta) )}\leqslant\phi ^+$ for $\nu$-a.a. $\theta$.
\end{lemma}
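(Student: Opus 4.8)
The strategy is to combine the ergodic-theoretic input from Proposition~\ref{prop:classification-general} and Corollary~\ref{coro:exponent} with the geometry of the strong stable fibres, exploiting the product structure of Gibbs measures. Since $\nu(P)=0$, Proposition~\ref{prop:classification-general} puts us in case (iii) for $\nu$: $\varphi^-<\varphi^\ast<\varphi^+$ $\nu$-a.s., $\lambda^\ast(\nu)>0$, and $\varphi^\ast$ is $\nu$-a.s.\ $T$-invariant. In particular the inequality $\phi^-\leqslant\ell^{\rm ss}_{(\theta,\varphi^\ast(\theta))}\leqslant\phi^+$ will follow once we know these strong stable fibres stay in $I$ (hence in $[\varphi^-,\varphi^+]$) on all of $\I$: indeed any strong stable fibre with domain $\I$ that at some point lies strictly between $\varphi^-$ and $\varphi^+$ must, by invariance (Lemma~\ref{lem:lss_invariant}) and Corollary~\ref{coro:exponent}, have forward Lyapunov exponent equal to $\lambda^\ast(\nu)>0$ $\nu$-a.s.; but a strong stable fibre is contracted in the fibre direction under forward iteration of $f$ (it carries the $\sigma$-eigenvalue), so its points cannot leave the invariant region $\T^2\times I$, and in fact one shows it must remain trapped between $\varphi^-$ and $\varphi^+$ since orbits outside $[\varphi^-(\theta),\varphi^+(\theta)]$ are attracted to $\varphi^\pm$ (see \eqref{eq:separation}) at rates $\lambda^\pm(\nu)<0$, incompatible with lying on a strong stable fibre whose exponent is $\lambda^\ast(\nu)>0$ unless it actually stays inside the global attractor.

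The heart of the argument is \eqref{eq:product-measure}. First I would show $\dom_{(\theta,\varphi^\ast(\theta))}=\I$ for $\nu$-a.a.\ $\theta$: by Lemma~\ref{lem:lss_domain} each such fibre extends a fixed distance $\delta>0$ to either side, and using Lemma~\ref{lem:lss_invariant} together with the fact that forward iterates $f^n(\theta,\varphi^\ast(\theta))$ stay in $\T^2\times I$ (and $\tau$ expands, so preimages under $\pi_2\circ T^n$ of a $\delta$-interval eventually cover $\I$), one bootstraps the local solution to a global one on $\I$; this is the standard ``pull-back'' extension of stable manifolds and is where Hypothesis~\ref{hyp:weak-contraction} (uniform expansion $\tau'\cdot f'_{\tau}>1$) is really used. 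Then, for the measure identity, I would argue as follows: $\varphi^\ast$ restricted to $\nu$-a.a.\ $\xi$-plane, call it $x\mapsto\varphi^\ast(\xi,x)$, is a graph whose points all have the same forward Lyapunov exponent $\lambda^\ast(\nu)$ (Corollary~\ref{coro:exponent}), and by Lemma~\ref{lem:lss_lyapunov} the Lyapunov exponent is constant along strong stable fibres; conversely, within a single $\xi$-plane, two distinct strong stable fibres through points $(\xi,x,\varphi^\ast(\xi,x))$ and $(\xi,u,\varphi^\ast(\xi,u))$ coincide as soon as they meet, so the set of $(x,u)$ for which $\varphi^\ast(\xi,u)=\ell^{\rm ss}_{(\xi,x,\varphi^\ast(\xi,x))}(u)$ is an equivalence relation in $(x,u)$; one must show it has full $\nu^{(2)}\otimes\nu^{(2)}$-measure in the $u$-variable for $\nu$-a.a.\ $(\xi,x)$. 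This is forced by $T$-invariance of $\varphi^\ast$ and ergodicity: the $f$-invariant measure $\nu_{\varphi^\ast}$ disintegrates over $\T^2$, and its conditional structure on $\xi$-planes must be carried by a single strong stable fibre per plane because (i) distinct strong stable fibres in a plane are disjoint, (ii) the partition into strong stable fibres is measurable and $f$-equivariant (Lemma~\ref{lem:lss_invariant}), and (iii) the base factor $(\tau,\nu^{(1)})$ is ergodic while the strong stable direction is uniformly contracted — so the quotient of $\nu_{\varphi^\ast}$ by the strong stable foliation is an invariant measure for the (now invertible-on-the-quotient) induced system, and a zero-one law for the ``same fibre'' relation gives that $\varphi^\ast$ lies on one strong stable fibre per plane, $\nu$-a.s. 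The product-measure phrasing \eqref{eq:product-measure} then records exactly this: fixing $(\xi,x)$ determines the fibre $\ell^{\rm ss}_{(\xi,x,\varphi^\ast(\xi,x))}$, and for $\nu^{(2)}$-a.a.\ $u$ the graph point $\varphi^\ast(\xi,u)$ sits on it — using also that $\nu_\xi\approx\nu^{(2)}$ for $\nu^{(1)}$-a.a.\ $\xi$ (the product property of Gibbs measures) to replace the conditional of $\nu$ on the plane by $\nu^{(2)}$.

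The main obstacle I anticipate is the measure-theoretic rigidity step — proving that $\nu_{\varphi^\ast}$ restricted to $\nu^{(1)}$-a.a.\ $\xi$-plane is concentrated on a \emph{single} strong stable fibre rather than a non-trivial family. The positivity $\lambda^\ast(\nu)>0$ together with the uniform strong contraction $\sigma<1$ should provide the needed hyperbolic ``transversality'': any two strong stable fibres in the same plane either coincide or stay disjoint forever, and an invariant measure cannot smear mass transversally across a uniformly contracted direction without contradicting invariance of $\varphi^\ast$ (which, being a graph over $x$ only, assigns to each $x$ a single $y$). Making this precise — presumably via a martingale/0-1 argument along the strong stable foliation, or by directly showing that the map $x\mapsto\varphi^\ast(\xi,x)$ solves the ODE \eqref{eq:IVP} for $\nu^{(2)}$-a.a.\ $x$ because its forward dynamics under the contracting cocycle force $(\varphi^\ast)'(x)=X_3(\xi,x,\varphi^\ast(x))$ in an a.e.\ sense — is the technical core; everything else is bookkeeping with the pull-back construction and Corollary~\ref{coro:exponent}.
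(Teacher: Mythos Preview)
Your plan misses the key idea and, as a result, the ``main obstacle'' you flag is not the right one.

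The paper's proof of \eqref{eq:product-measure} is direct and does not use any measure-rigidity, zero-one law, or quotient by the strong stable foliation. The crucial observation is this: since $\nu(P)=0$ we are in case~(ii) or~(iii) of Proposition~\ref{prop:classification-general} (not only~(iii), as you claim), so after a symmetry we may assume $\lambda^-(\nu)<0\leqslant\lambda^*(\nu)$. Corollary~\ref{coro:exponent} then gives, on a $\nu$-full set $A$, the dichotomy $\lambda_{(\theta,y)}=\lambda^*(\nu)\geqslant0$ at $y=\varphi^*(\theta)$ versus $\lambda_{(\theta,y)}<0$ for $y<\varphi^*(\theta)$. Since the Lyapunov exponent is constant along strong stable fibres (Lemma~\ref{lem:lss_lyapunov}), the fibre through $(\theta,\varphi^*(\theta))$ can never hit a point $(\xi,u,y')$ with $(\xi,u)\in A$ and $y'<\varphi^*(\xi,u)$, while for every $\tilde y<\varphi^*(\theta)$ the fibre through $(\theta,\tilde y)$ must stay strictly below $\varphi^*(\xi,\cdot)$. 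Continuity and monotonicity of $y\mapsto\ellss_{(\theta,y)}(u)$ then force $\ellss_{(\theta,\varphi^*(\theta))}(u)=\varphi^*(\xi,u)$ for all $u\in(x-\delta,x+\delta)\cap A_\xi$. The Gibbs product structure $\nu_\xi\approx\nu^{(2)}$ and full support of $\nu^{(2)}$ then give density of $A_\xi$, and everything follows. No ergodicity/0-1 machinery is needed; the Lyapunov exponent serves as a pointwise label that the strong stable fibre must respect.

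Your proposed order of the argument is also backwards, and this creates a real problem. You try to establish $\dom_{(\theta,\varphi^*(\theta))}=\I$ \emph{first}, via forward iteration and the invariance Lemma~\ref{lem:lss_invariant}. But that lemma only applies to $u$ already in the domain; it cannot by itself extend the domain. To bootstrap Lemma~\ref{lem:lss_domain} you must know that the solution stays in $I$ on the $\delta$-interval, and the paper gets this \emph{from} the identity $\ellss_{(\theta,\varphi^*(\theta))}(u)=\varphi^*(\xi,u)\in I$ on a dense subset, not before it. So the local version of \eqref{eq:product-measure} must come first, and the domain claim is a consequence, not a prerequisite. Finally, the inequality $\phi^-\leqslant\ellss_{(\theta,\varphi^*(\theta))}\leqslant\phi^+$ is likewise a corollary of \eqref{eq:product-measure} plus semi-continuity of $\phi^\pm$ and continuity of the fibre, not an independent step.
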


\begin{proof}
As $\nu(P)=0$, we are in case~(ii) or (iii) of Proposition~\ref{prop:classification-general}.
W.l.o.g. let $\lambda^-(\nu)<0$, since otherwise $\lambda^+(\nu)<0$ so that the proof is similar.
By Corollary \ref{coro:exponent}, there is a set $A\subseteq\I^2$ with $\nu(A)=1$ such that
\begin{equation}\label{eq:phi_ell_proof0}
	\lambda _{(\theta, y) } =
	\begin{cases}
		\lambda ^\ast(\nu)\geqslant0 & \mbox{for } y = \varphi ^\ast ( \theta ) \\
		< 0 & \mbox{for } y <\varphi^\ast(\theta)
	\end{cases}
\end{equation}
for all $\theta\in A$. (The value for $y>\varphi^\ast(\theta)$ does not matter.)
Moreover, by Lemma \ref{lem:lss_lyapunov},
\begin{equation*}
	\lambda _{((\xi,u), \ellss _{(\theta, y)} (u) ) } = \lambda _{(\theta, y ) }
\end{equation*}
for all $\theta=(\xi,x)\in\I^2$, $y\in I$ and $ u \in \dom _{(\theta, y)} $. Hence
\begin{equation} \label{eq:phi_ell_proof1}
\lambda _{((\xi,u), \ellss _{(\theta, y)} (u) ) } 
=
	\begin{cases}
		\lambda ^\ast(\nu)\geqslant0 & \mbox{if } y = \varphi ^\ast ( \theta ) \\
		< 0 & \mbox{if } y <\varphi ^\ast ( \theta ) 
	\end{cases} 	
\end{equation}
for all $\theta=(\xi,x)\in A$ and $ u \in \dom _{(\theta, y)} $.
A combination of (\ref{eq:phi_ell_proof0}) and (\ref{eq:phi_ell_proof1}) implies
\begin{equation*}
\tilde y=\varphi^*(\theta)
\quad\Rightarrow\quad
\lambda _{((\xi,u), \ellss _{(\theta, \tilde y ) } (u))}  =\lambda^*(\nu)
\quad\Rightarrow\quad
\ellss _{(\theta,\tilde y ) } (u)\geqslant\varphi^*(\xi,u)
\end{equation*}
and
\begin{equation*}
\tilde y<\varphi^*(\theta)
\quad\Leftrightarrow\quad
\forall y\leqslant\tilde{ y}:\
\lambda _{((\xi,u), \ellss _{(\theta, y ) } (u))}  <0
\quad\Leftrightarrow\quad
\forall y\leqslant\tilde{ y}:\
\ellss _{(\theta, y ) } (u)<\varphi^*(\xi,u)
\end{equation*}
for all $\theta=(\xi,x)\in A$, $\tilde y\in I$ and $u\in(x-\delta,x+\delta)\cap\I\cap A_\xi\subseteq\dom _{(\theta, y)}\cap A_\xi$, where $ A _\xi := \{ u \in \I : ( \xi, u ) \in A \} $. As $y\mapsto\ellss_{(\theta,y)}(u)$ is strictly increasing and continuous, this shows that 
\begin{equation}
	\ellss _{(\theta, \varphi ^\ast ( \theta ))} (u) = \varphi ^\ast ( \xi, u) \in I	\label{eq:phi_ell_proof}
\end{equation}
for all $\theta= ( \xi, x ) \in A $ and 
$u\in(x-\delta,x+\delta)\cap\I\cap A_\xi$.

As $ \nu ( A ) = 1 = ( \nu ^{(1)} \otimes \nu^{(2)})  ( A ) $,
Fubini's theorem implies $ \nu^{(2)} ( A _\xi ) = 1 $ for all $ \xi \in R $ for some $ \nu ^{(1)} $-full set $ R \subseteq \I $. As $\nu^{(2)}$ has full support, all these $A_\xi$ are dense in $\I$. As 
$\ellss _{(\theta, \varphi ^\ast ( \theta ))}$ is continuous,
it now follows from \eqref{eq:phi_ell_proof}
that $\ellss _{(\xi,x, \varphi ^\ast ( \xi,x ))}(u)\in I$ for all $\xi\in R$,  $x\in A_\xi$ and all $u\in(x-\delta,x+\delta)\cap\I$.
As $\dom_{(\theta,\varphi^*(\theta))}=\dom_{(\xi,u,\ellss_{(\theta,\varphi^*(\theta))}(u))}$ for all $\theta=(\xi,x)\in\I^2$ and  $u\in 
(x-\delta,x+\delta)\cap\I\subseteq\dom_{(\theta,\varphi^*(\theta))}$ by definition of this interval, a repeated application of
Lemma~\ref{lem:lss_domain} implies $\dom_{(\theta,\varphi^*(\theta))}=\I$ for   all  $\theta=(\xi,x)$ from a set $\tilde{A}$ with $\nu(\tilde{A})=1$.
Hence
\[
	\left\{ ((\xi, x) , u ) : (\xi, x ) \in \tilde{A}  \mbox{ and }  u \in A_\xi \right\} \subseteq    \left\{ ( (\xi ,x ), u) : \varphi ^\ast ( \xi , u) = \ell ^{ss} _{(\xi, x, \varphi ^\ast (\xi, x ))} ( u )  \right\} .
\]
As the l.h.s. set has full $  \nu \otimes \nu^{(2)} $-measure, this finishes the proof of (\ref{eq:product-measure}).

An immediate consequence of (\ref{eq:product-measure}) is
\[
	( \nu \otimes \nu^{(2)} ) \left\{ ( \theta, u) : \phi ^- (u)\leqslant \ell ^{ss} _{(\theta, \varphi ^\ast (\theta))} ( u )\leqslant\phi ^+ (u)\right\} = 1 \,.
\]
In view of the semi-continuity of the bounding graphs, the continuity of the strong stable fibres and the fact $\supp (\nu ^{(2)}) = \T$, this finally implies
$\phi ^-\leqslant\ellss _{(\theta, \varphi ^\ast (\theta) )}\leqslant\phi ^+$ for $\nu$-a.a. $\theta$.
\end{proof}

\begin{proof}[Proof of Theorem~\ref{theo:A-class}]
Recall from Theorem~\ref{theo:top-class} that the general assumption in case~A is
$\widetilde{\cG\varphi^+}\cap\widetilde{\cG\varphi^-}=\emptyset$.
It is a consequence of Proposition~\ref{prop:classification-general} that either
$\nu_{\varphi^*}(\widetilde{\cG\varphi^+}\cup\widetilde{\cG\varphi^-})=0$ (case~A$_\nu1$) or
$\nu_{\varphi^*}(\widetilde{\cG\varphi^+})=1$ (case~A$_\nu2^+$)
or $\nu_{\varphi^*}(\widetilde{\cG\varphi^-})=1$ (case~A$_\nu2^-$). In all these cases, the assertions on the Lyapunov exponents follow from the same proposition.
So we only must prove the additional assertions.

Recall that $P=\emptyset$ in case~A of Theorem~\ref{theo:top-class}, in particular $\nu(P)=0$.
For $\xi\in\I$ let $\varphi_\xi^*:\I\to\R, u\mapsto\varphi^*(\xi,u)$.
Then $(\nu_{\varphi^*})_\xi=\nu_\xi\circ(\varphi_\xi^*)^{-1}
\approx\nu^{(2)}\circ(\varphi_\xi^*)^{-1}$ for $\nu^{(1)}$-a.a. $\xi$.
As $\varphi_\xi^*=\ellss_{(\xi,x,\varphi^*(\xi,x))}$ $\nu^{(2)}$-a.s. for $\nu$-a.a. $(\xi,x)$ by
Lemma~\ref{lem:private_discussion}, this shows that, for $\nu^{(1)}$-a.a. $\xi$ and $\nu^{(2)}$-a.a. $x$,
\begin{equation}\label{eq:gamma-ellss}
\text{the measure $(\nu_{\varphi^*})_\xi$ is supported by the graph of $\ellss_{(\xi,x,\varphi^*(\xi,x))}$.
}
\end{equation}
In particular, given $\xi$, this is the same graph for $\nu^{(2)}$-a.a. $x$, and we denote it by $\gamma_\xi^*$.

\begin{compactenum}[i)]
\item[A$_\nu1$)] 
Let
\[
A=\left\{(\xi,x)\in\I^2:\ \exists u\in\I\text{ s.t. }
(\xi,u,\ellss_{(\xi,x,\varphi^*(\xi,x))}(u))\in\widetilde{\cG\varphi^+}\right\}.
\]
$A$ is Borel-measurable, because it is the intersection of the sets $V_n$ $(n\in\N)$ of all $(\xi,x)$, for which the graph of the continuous function 
$u\mapsto \ellss_{(\xi,x,\varphi^*(\xi,x))}(u)$ has distance at most $1/n$ from $\widetilde{\cG\phi^+}$, and membership in a such set $V_n$ is determined by the values of 
$\ellss_{(\xi,x,\varphi^*(\xi,x))}(u)$ on any 
fixed countable dense set of arguments $u\in\T$.

Suppose for a contradiction that $\nu(A)>0$. Let $A_\infty:=\limsup_{n\to\infty}T^{n}(A)$. Then $\nu(A_\infty)=1$ by ergodicity. Let $(\xi,x)\in A_\infty$. There are $n_1<n_2<\dots$ such that $(\xi_k,x_k):=T^{-n_k}(\xi,x)\in A$. Hence there are $u_1,u_2,\dots\in\I$ such that 
$(\xi_k,u_k,\ellss_{(\xi_k,x_k,\varphi^*(\xi_k,x_k))}(u_k))\in\widetilde{\cG\varphi^+}$ for all $k\in\N$. 
Let $u_k'=\pi_2\circ T^{n_k}(\xi_k,u_k)$.Then $|u_k'-x|\leqslant\max\{a,1-a\}^{n_k}$, and
\[
\qquad
(\xi,u_k',\ellss_{(\xi,x,\varphi^*(\xi,x))}(u_k'))
=
f^{n_k}\left((\xi_k,u_k,\ellss_{(\xi_k,x_k,\varphi^*(\xi_k,x_k))}(u_k))\right)
\in\widetilde{\cG\varphi^+}\quad\text{for all }k\in\N\ ,
\]
where we used Lemma \ref{lem:lss_invariant} and the $f$-invariance of $\widetilde{\cG\varphi^+}$.
As $u\mapsto\ellss_{(\xi,x,\varphi^*(\xi,x))}(u)$ is continuous and $\lim_{k\to\infty}u_k'= x$, we conclude that 
$(\xi,x,\varphi^*(\xi,x))=
(\xi,x,\ellss_{(\xi,x,\varphi^*(\xi,x))}(x))
\in\widetilde{\cG\varphi^+}$.
Recall that this holds for all $(\xi,x)\in A_\infty$ and that $\nu(A_\infty)=1$, so that $\nu_{\varphi^*}(\widetilde{\cG\varphi^+})=1$, which is excluded in case~A$_\nu1$.

We thus proved that $\gamma_\xi^*\cap\widetilde{\cG\varphi^+}=\emptyset$ for $\nu^{(1)}$-a.a. $\xi$.
The proof for $\widetilde{\cG\varphi^-}$ is the same.

\item[A$_\nu2^+$)] The lower bounding graph
$\varphi^{+-}(\theta):=\min\{y\in I: (\theta,y)\in\widetilde{\cG\varphi^+}\}$
 of $\widetilde{\cG\varphi^+}$ is invariant and lower semi-continuous
 by construction. Proposition~\ref{prop:classification-general} implies that $\varphi^*=\varphi^{+-}$ $\nu$-almost surely.
  As $\varphi^+$ does not depend on $\xi$, the same holds for $\varphi^{+-}$, i.e $\varphi^{+-}(\xi,x)=\phi^{+-}(x)$.

For $\xi\in \I$, let $V_\xi=\{x\in\I: \varphi^*(\xi,x)=\varphi^{+-}(\xi,x)\}$, and
denote by $U$ the set of
those $\xi\in\I$ for which $\nu^{(2)}(V_\xi)=1$ and for which (\ref{eq:gamma-ellss}) holds for $\nu^{(2)}$-a.a. $x$. Then $\nu^{(1)}(U)=1$, and $V_\xi$ is dense in $\I$ for all $\xi\in U$.

Let $\xi\in U$. Then
\begin{equation*}
\begin{split}
\psi_\xi(u)
&=
\lim_{V_\xi\ni u'\to u}\psi_\xi(u')
=
\lim_{V_\xi\ni u'\to u}\varphi^*(\xi,u')
\leqslant
\liminf_{V_\xi\ni u'\to u}\varphi^+(\xi,u')
=
\liminf_{V_\xi\ni u'\to u}\phi^+(u')\\
&\leqslant
\phi^+(u)\quad\text{for all }u\in \I\,,
\end{split}
\end{equation*}
where we used the upper semi-continuity of $\phi^+$ in the last step. 
As $\psi_\xi$ is continuous, this proves that $\psi_\xi\leqslant\phi^{+-}$.

In order to prove the reverse inequality,
denote 
$\psi_\xi:=\ellss_{(\xi,x,\varphi^*(\xi,x))}$ for the moment. 
Then $\phi^*(\xi,u)=\psi_\xi(u)$ for $\nu^{(2)}$-a.a. $u$.
Hence $(\xi,u,\psi_\xi(u))\in\widetilde{\cG\varphi^+}$ for $\nu^{(2)}$-a.a. $u$, in particular for a dense set of $u\in I$. As $\psi_\xi$ is continuous and 
$\widetilde{\cG\varphi^+}$ is closed, it follows that
$(\xi,u,\psi_\xi(u))\in\widetilde{\cG\varphi^+}$ for all $u\in\I$, equivalently that $\psi_\xi\geqslant\phi^{+-}$. 

Finally, observe that there are still the two possibilities: a) $\varphi^{+-}<\varphi^+$ $\nu$-a.s. or b) $\varphi^{+-}=\varphi^+$ $\nu$-a.s.

\item[A$_\nu2^-$)] This case is treated in complete analogy to the foregoing one.
\end{compactenum}
\end{proof}

\begin{proof}[Proof of Theorem~\ref{theo:B-class}]
Now the case where $P=C^+\cap C^-\neq\emptyset$ is considered, so that $P$ is indeed a dense $G_\delta$-set by Theorem~\ref{theo:top-class}.
We begin the proof with the following observation:
Let $\nu\in\cE_T(\I^2)$ be any Gibbs measure with $ \nu ( P ) = 0 $.
From Lemma~\ref{lem:private_discussion} we know that $\phi ^-\leqslant\ellss _{(\theta, \varphi ^\ast (\theta) )}\leqslant\phi ^+$ for some $\theta\in\T ^2$, so the three functions coincide on the set $P$.
As the function $(\tilde\xi,\tilde x)\mapsto\ellss _{(\theta, \varphi ^\ast (\theta) )}(\tilde x)$ is continuous, its graph is exactly $\cP$.
In particular, case~B1 is excluded.

Hence, in case~B1 only Gibbs measures $\nu$ with $\nu(P)=1$ (and hence with $\lambda^*(\nu)\leqslant0$, see Proposition~\ref{prop:classification-general}) are possible.

We turn to case~B2, i.e. we assume that
$\#\{y\in I:(\theta,y)\in\cP\}=1$ for all $\theta\in\T^2$.
Then there is a function $\hat\varphi :\T^2\to I$ such that $\cP$ is the graph of $\hat\varphi$.
Observe that $\hat\varphi$ is a $f$-invariant continuous function, as $\cP$ is a $f$-invariant compact set, and that $\hat\varphi(\xi,x)=\hat\phi(x)$, as the set $\cP$ is defined in terms of the functions $\varphi^\pm(\xi,x)=\phi^\pm(x)$.

Let $\nu$ be any Gibbs measure.\\
- If $\nu(P)=1$, we are in case~(i) of Proposition~\ref{prop:classification-general}.
Hence $\hat\varphi = \varphi ^\ast$ $\nu$-a.s. and $\lambda^*(\nu)\leqslant0$.
\\ 
- Otherwise $\nu(P)=0$, because $P$ is $T$-invariant and $\nu$ is ergodic.
By Lemma \ref{lem:private_discussion} we have $\phi ^-\leqslant \ellss _{(\theta, \varphi ^\ast (\theta))}\leqslant\phi ^+$ for $\nu$-a.a.~$\theta$, which implies $\hat\phi =\ellss _{(\theta, \varphi ^\ast (\theta))}$ for $\nu$-a.a. $\theta$.
Inserting this into \eqref{eq:product-measure}, $(\nu ^{(1)}\times\nu^{(2)})\left\{(\xi,u) : \varphi ^\ast (\xi, u)=\hat\phi(u)\right\}=1$, i.e. $\hat\varphi=\varphi ^\ast $ $\nu$-a.s.

It remains to consider the regularity of $\hat\phi$. So assume that $\hat\phi$  is everywhere differentiable with bounded derivative. As $\hat\varphi(\xi,x)=\hat\phi(x)$ and $\hat\varphi(T(\xi,x))=f_x(\hat\varphi(\xi,x))$, we have
\begin{equation*}
\frac{\partial\hat\varphi}{\partial x}(T(\xi,x))\cdot\sigma(\xi)
=
\frac{\partial f_x}{\partial x}(\hat\varphi(\xi,x))
+
f_x'(\hat\varphi(\xi,x))\cdot \frac{\partial\hat\varphi}{\partial x}(\xi,x)\ ,
\end{equation*}
i.e.
\begin{equation*}
\begin{split}
\frac{\partial\hat\varphi}{\partial x}(\xi,x)
&=
-\frac{\frac{\partial f_x}{\partial x}(\hat\varphi(\xi,x))}{f_x'(\hat\varphi(\xi,x))}   
+
\frac{\sigma(\xi)}{f_x'(\hat\varphi(\xi,x))}\cdot \frac{\partial\hat\varphi}{\partial x}(T(\xi,x))\\
&=
-A(\xi,x,\hat\varphi(\xi,x))
+\Gamma(\xi,x,\hat\varphi(\xi,x))\cdot \frac{\partial\hat\varphi}{\partial x}(T(\xi,x))
\end{split}
\end{equation*}
with $A$ and $\Gamma$ as defined in the proof of Lemma~\ref{lemma:shortX3}.
Combining this identity with equation \eqref{eq:equivariance3} from the same proof we obtain
for the difference $H(\xi,x):=\frac{\partial\hat\varphi}{\partial x}(\xi,x)-X_3(\xi,x,\hat\varphi(\xi,x))$,
\begin{equation*}
\begin{split}
H(\xi,x)
&=
\Gamma(\xi,x,\hat\varphi(\xi,x))\cdot\left(
\frac{\partial\hat\varphi}{\partial x}(T(\xi,x))-X_3(f(\xi,x,\hat\varphi(\xi,x)))\right)\\
&=
\Gamma(\xi,x,\hat\varphi(\xi,x))\cdot H(T(\xi,x))\ ,
\end{split}
\end{equation*}
where we used $f(\xi,x,\hat\varphi(\xi,x))=(T(\xi,x),f_x(\hat\varphi(\xi,x)))=(T(\xi,x),\hat\varphi(T(\xi,x)))$ for the last equality.
Since $ \| \Gamma \| _{C ( \T ^2 \times J )} < 1 $ by Hypotheses \ref{hyp:weak-contraction},
\begin{equation*}
|H(\xi,x)|
\leqslant
\| \Gamma \| _{C ( \T ^2 \times J )}^n\cdot|H(T^n(\xi,x))|
\to 0\quad\text{as }n\to\infty\ ,
\end{equation*}
as $\hat\varphi$ is everywhere differentiable with bounded derivative.
Hence 
\begin{equation*}
\hat\phi'(x)=\frac{\partial\hat\varphi}{\partial x}(\xi,x)=X_3(\xi,x,\hat\varphi(\xi,x))
\quad\text{for all }(\xi,x)\in\T^2\ ,     
\end{equation*}
 which
proves that $\hat\phi=\ellss_{(\xi,x,\hat\phi(x))}$ for all $(\xi,x)\in\T^2$, see
Definition~\ref{def:l_ss}. 
In particular, $\hat\phi$ is of class~$\CLip$.    
\end{proof}

\begin{proof}[Proof of Theorem~\ref{theo:dimension}]
a) Assume that
$\inf_{\nu\in\cM_T(\T^2),\, \nu\, {\rm Gibbs}}\lambda^*(\nu)>0$. Then
 case~B1 of Theorem~\ref{theo:B-class} is clearly excluded. Suppose for a contradiction that we are in case~B2. Then $\lambda^*(m^2)=\hat\lambda(m^2)>0$, and it follows from \cite[Lemma 2.69]{Otani2015} that $m^2(P)=0$ (observe also Footnote~\ref{foot:2.69} below).
Hence, invoking B2) again, the set $\cP$ is the graph of a $\CLip$-function $\hat\varphi$. Using the variational principle one shows then the first of the following identities:
 \begin{equation*}
 \begin{split}
 \inf\left\{\hat \lambda(\nu): \nu\in\cM_T(\T^2)\right\}
 &=
 \inf\left\{\hat \lambda(\nu): \nu\in\cM_T(\T^2),\, \nu\, {\rm Gibbs}\right\}\\
 &=
 \inf\left\{\lambda^*(\nu): \nu\in\cM_T(\T^2),\, \nu\, {\rm Gibbs}\right\}
>0\,.
 \end{split}
 \end{equation*}
 The semi-uniform ergodic theorem \cite{Sturman2000} now implies that there is $n_0\in\N$ such that $(f_\theta^{n_0})'(\hat\varphi(\theta))>2$ for all $\theta\in\T^2$, which implies readily that $P=\emptyset$.
  Therefore $\inf_{\nu\in\cM_T(\T^2),\, \nu\, {\rm Gibbs}}\lambda^*(\nu)>0$
  implies case~A of Theorem~\ref{theo:top-class} - in particular $P=\emptyset$ and $\dimH(P)=0$.\\[1mm]
b) Assume now that there is some $\nu\in\cM_T(\T^2)$ with $\lambda^*(\nu)<0$. Then there is also an ergodic measure with this property, and case~A of Theorem~\ref{theo:top-class} is excluded. 

Suppose first that $m^2(P)=1$. Then $\lambda^*(m^2)\leqslant0$ by Proposition~\ref{prop:classification-general}(i).
Of course $\dimH(P)=2$ in this case. As the supremum on the r.h.s. of (\ref{eq:dim-formula}) is bounded by $1$, and as $h_{m^2}(T)=h_m(\tau)=\int\log\tau'\,dm$ by Rohlin's formula, the supremum $1$ is indeed attained for $\nu=m^2$, and equality in (\ref{eq:dim-formula}) is proved with the supremum attained for $\nu=m^2$.

Suppose now that $m^2(P)=0$. Then we are in case~B2-ii of Theorem~\ref{theo:B-class},
 and, for each Gibbs measure $\nu\in\cM_T(\T^2)$, the graph of $\varphi^*$ coincides $\nu$-almost surely with the
graph of the $\CLip$-function $\hat\varphi$ from that theorem.
Denote $\hat\lambda(\nu)=\int\log f_\theta'(\hat\varphi(\theta))\,d\nu(\theta)$.
By Proposition~\ref{prop:classification-general}, $\hat\lambda(\nu)\leqslant\lambda^*(\nu)$ for each $\nu\in\cM_T(\T^2)$.

Note next that
\begin{equation}\label{eq:dim-with-two-varphi}
	\sup _{\nu\in\cM_T(\T^2)} \left\{  \frac{h _\nu (T)}{\int \log \tau ' \, d \nu ^{(2)}} : \hat\lambda ( \nu ) \leqslant 0 \right\} = \sup _{\nu\in\cM_T(\T^2)} \left\{  \frac{h _\nu (T)}{\int \log \tau ' \, d \nu ^{(2)}} : \lambda ^\ast ( \nu ) \leqslant 0 \right\} .
\end{equation}
Indeed, as $\hat\lambda(\nu)\leqslant\lambda^*(\nu)$ for all $\nu\in\cM_T(\T^2)$, 
we have l.h.s.\,$\geqslant$\,r.h.s. On the other hand, as $\theta=(\xi,x)\mapsto\log\tau'(x)$ and $\theta=(\xi,x)\mapsto\log f_x'(\hat\phi(x))$ are Hölder continuous, the l.h.s. is maximized by a Gibbs measure $\tilde{\nu}$, 
and that satisfies $\lambda^*(\tilde{\nu})=\hat{\lambda}(\tilde\nu)$, because $\varphi^*=\hat\varphi$ $\nu$-a.s.

It remains to show that
\begin{equation}
	 \dim _H ( P ) = \sup _{\nu\in\cM_T(\T^2)} \left\{  \frac{h _\nu (T)}{\int \log \tau ' \, d \nu ^{(2)}} : \lambda^* ( \nu ) \leqslant 0 \right\} + 1\,.
\end{equation}
The '$ \geqslant $'-estimate is precisely
Theorem 2.1 of \cite{Otani2015}.
To prove the reverse inequality, we first apply Lemma 2.69 of \cite{Otani2015}.
That lemma asserts\footnote{To be precise, \cite[Lemma 2.69]{Otani2015} asserts this property for $\varphi^*$ instead of $\hat\varphi$, but its proof applies verbatim also to $\hat\varphi$: only in the first line of the proof one occurence of $\varphi^*$ must be replaced by $\hat\varphi$.\label{foot:2.69}}
\[
	P \subseteq N^\leqslant_0:=\left\{ \theta : \liminf _{n \rightarrow \infty} \frac{1}{n} \sum _{i=0} ^{n-1} \log f _{T^{-j}\theta} ' ( \hat\varphi(T^{-j}\theta) ) \leqslant 0 \right\} .
\]
Therefore, in view of (\ref{eq:dim-with-two-varphi}), it remains to show that
\begin{equation}\label{eq:last-inequ}
\dimH(N^\leqslant_0)
\leqslant
\sup _{\nu\in\cM_T(\T^2)} \left\{ \frac{h _\nu (T)}{\int \log \tau ' \, d \nu ^{(2)}} : \hat\lambda  ( \nu ) \leqslant 0 \right\}  + 1\,.
\end{equation}
This can be deduced from the very general results in \cite{Barreira-book} in exactly the same way as done in the proof of Lemmas 4 and 5 in \cite{KeOt2012}. Then Lemma 5 of that paper asserts (among others) that $\dimH(N^\leqslant_0)=D(0)+1$, where\footnote{In\cite{KeOt2012}, the reader finds $\int-\log\|dT\mid E^s\|\,d\nu$ instead of $\int\log\tau'\,d\nu^{(2)}$. In our special setting where $T$ = baker map, we have $T^{-1}(\xi,x)=T(x,\xi)$, so that $\int-\log\|dT\mid E^s\|\,d\nu=
\int\log\|dT^{-1}\mid E^s\|\circ T^{-1}\,d\nu=\int\log\tau'(x)\,d\nu(\xi,x)=\int\log\tau'\,d\nu^{(2)}$.}
\begin{equation*}
D(0)
=
\sup _{\nu\in\cM_T(\T^2)} \left\{  \frac{h _\nu (T)}{\int \log \tau ' \, d \nu ^{(2)}} : \hat\lambda  ( \nu ) =0 \right\} \,.
\end{equation*}
From this the estimate (\ref{eq:last-inequ}) follows at once.
\end{proof}

\appendix

\section{Further proofs}

\begin{proof}[Proof of Corollary~\ref{coro:exponent}]
For $ y \in [ \varphi ^- ( \theta ) , \varphi ^+ ( \theta ) ] $, this follows immediately from Proposition \ref{prop:classification-general}.
For $ y \not \in [ \varphi ^- ( \theta ) , \varphi ^+ ( \theta ) ]  $, use the pull-back construction.
For example, for $ y >  \varphi ^+ ( \theta ) $, take a large $ a > 0 $ and define $ \psi ^+ _k ( \theta ) = f _{T ^{-k}\theta} ^k (a) $.
Then, in view of the monotonicity of the maps $f_\theta$, we have $ 0\leqslant f_\theta^k ( y ) - f_\theta^k ( \varphi ^+ ( \theta ) )  \leqslant  \psi ^+ _k(T^k\theta) - \varphi ^+(T ^k\theta) $ for all $ k \in \N _0 $.
Observe that by Birkhoff's ergodic theorem, we have for each $ N \in \N $
\[
	 \frac{1}{n} \sum _{k=0} ^{n-1} | f_\theta^k ( y ) - f_\theta^k ( \varphi ^+ ( \theta ) ) | \leqslant \frac{N }{n} \|  \psi ^+ _0 - \varphi ^+ \| _\infty + \frac{1}{n} \sum _{k=N} ^{n-1}  \sup _{j \geqslant N}(  \psi ^+ _j - \varphi ^+ )(T ^k\theta) \rightarrow \int \sup _{j \geqslant N}(  \psi ^+ _j - \varphi ^+ ) \, d \nu 
\]
for $ \nu $-a.a. $ \theta $ as $n\to\infty$.
In the limit  $ N \rightarrow \infty $, it follows
\[
	\lim _{n \rightarrow \infty }\frac{1}{n} \sum _{k=0} ^{n-1} | f_\theta^k ( y ) - \varphi ^+ (T^k \theta ) |
	=
	\lim _{n \rightarrow \infty }\frac{1}{n} \sum _{k=0} ^{n-1} | f_\theta^k ( y ) - f_\theta^k ( \varphi ^+ ( \theta ) ) | = 0 
\]
for $ \nu $-a.a. $ \theta $.
As $(\theta,y)\mapsto\log f_\theta'(y)$ is continuous, this implies $ \lambda _{(\theta , y)} = \lambda ^+ ( \nu ) $.
\end{proof}

\begin{proof}[Proof of Lemma \ref{lemma:shortX3}]
Let
\begin{equation}
	X _3 ( \xi, x , y ) := - \sum _{k=0} ^\infty  \Gamma ^k ( \xi, x, y) \cdot  A \circ f ^k ( \xi, x , y ) ,	\label{eq:X_3}
\end{equation}
where $ \Gamma ^k := \prod _{j=0} ^{k-1} \Gamma \circ f ^j $,
\[
	\Gamma ( \xi, x , y ) := \frac{  \sigma(\xi)  }{f _x ' (y)} \quad \mbox{and} \quad	
 A(\xi ,x,y):=
\frac{\frac{\partial}{\partial x}f_x(y)}{f _x ' (y)}  .
\]
Since $ \| \Gamma \| _{C ( \T ^2 \times J )} < 1 $ due to Hypotheses \ref{hyp:weak-contraction}, $ X _3 $ is well-defined and uniformly bounded.
A straightforward calculation shows 
 \begin{equation}\label{eq:equivariance}
 	D f _{(\xi, x, y)} \, \left[ \begin{matrix}
 	0  \\ 1 \\ X _3 (\xi, x, y) \end{matrix} \right] = 
 	\sigma(\xi)  
 	\, \left[ \begin{matrix}  
 	0  \\ 1 \\ X _3 ( f (\xi, x, y) )
 	\end{matrix}  \right],
 \end{equation}
i.e. $X$ is $DF$ equivariant,
and a closer inspection of the definition of $X_3$ reveals that $(x,y)\mapsto X_3(\xi,x,y)$ is continuous for each fixed $\xi\in \T$. 
For use in the proof of Theorem~\ref{theo:B-class}, note that \eqref{eq:equivariance} implies
\begin{equation}\label{eq:equivariance3}
{X}_3(\xi,x,y)
=
-A(\xi,x,y)	+\Gamma(\xi,x,y)\cdot  X_3(f(\xi,x,y))\ .
\end{equation}

We turn to the proof of
the uniform bound (\ref{eq:X3deriv-y-bound}) on $\partial X_3/\partial y$:
Let $ \sigma (\xi , x ) := \sigma(\xi) $    
and $ q (\xi, x, y)
:= 1 / f _x ' (y) $.
Recall the notations $ \sigma^k := \prod _{j=0} ^{n-1} \sigma \circ T ^j $, $ \Gamma ^k := \prod _{j=0} ^{k-1} \Gamma \circ f ^j $ and $ q ^k := \prod _{j=0} ^{k-1} q \circ f ^j $.

Since $ \Gamma ^k ( \xi, x, y ) = \sigma^k (\xi, x) \, q ^k (\xi, x, y) $ and $ q ^j (\xi, x, y) = 1 / (f ^j _{(\xi, x)}) ' (y) $, we have
\begin{equation*}
 	\frac{\partial}{\partial y}\log \Gamma ^k ( \xi, x, y ) 
= 	
\sum_{j=0}^{k-1}\frac{\partial}{\partial y}\left(\log q \circ f^j( \xi, x, y )\right)  	
 	=
\sum _{j=0} ^{k-1} \left(\frac{\partial}{\partial y}\log q \right) \left( 
f^j(\xi,x,y) \right) \cdot ( f _{(\xi,x)} ^j )' (y)\  ,
\end{equation*}
so that
\begin{equation*}
	\left| \frac{\partial}{\partial y} \log\left(  \Gamma ^k ( \xi, x, y) \right) \right| 
	 =  \left| \sum _{j=0} ^{k-1} \frac{\partial \log q}{\partial y} \left(f^j(\xi,x,y)   \right) \cdot ( f _{(\xi,x)} ^j )' (y)  \right| 
	 \leqslant 
	 C_0     \ \sum _{j=0} ^{k-1} ( f _{(\xi,x)} ^j )' (y) ,
\end{equation*}
where the constant $C_0 := \|\partial(\log f_x')/\partial y\| _{C ( \T \times J )} $
is finite by Hypothesis~\ref{hyp:special-ass} and~\ref{hyp:regularity}.
Furthermore,
with the constants
$C:=C_0\cdot\|A\|_{C(\T\times J)}$ and $C':=\|\frac{\partial A}{\partial y}\|_{C(\T\times J)}$, that are also finite,
\begin{eqnarray*}
	&& 
	\sum _{k=0} ^\infty  \left| \frac{\partial}{\partial y} \left(  \Gamma ^k ( \xi, x, y) \cdot  A \circ f ^k ( \xi, x , y ) \right) \right| \\
	& \leqslant  &	
	\sum _{k=0} ^\infty  \Gamma ^k ( \xi, x, y ) \left( C_0 \cdot |A\circ f ^k| ( \xi, x , y ) \sum _{j=0} ^{k-1} ( f _{(\xi,x)} ^j )' (y) +\bigg|\frac{\partial A}{\partial y}\circ f^k\bigg|(\xi,x,y)\cdot (f^k_{(\xi,x)})'
	(y)  \right)\\
	& \leqslant & 
	C \sum _{j=0} ^\infty  \sum _{k=j} ^\infty \Gamma ^k ( \xi, x, y )  \; ( f _{(\xi,x)} ^j )' (y) 
	+C'\sum_{k=0}^\infty \Gamma^k(\xi,x,y)\cdot(f^k_{(\xi,x)})'(y)
	\\
	& \leqslant &	
	\frac{C }{1 - \| \Gamma \| _{C ( \T ^2 \times J )}} \sum _{j=0} ^\infty  \Gamma ^j ( \xi, x, y )  \; ( f _{(\xi,x)} ^j )' (y) 
	+C'\sum_{k=0}^\infty \Gamma^k(\xi,x,y)\cdot(f^k_{(\xi,x)})'(y)
	\\
	& = &	
	\left(\frac{C}{1 - \| \Gamma \| _{C ( \T ^2 \times J )}}+C'\right)   
	 \sum _{j=0} ^\infty \sigma ^j ( \xi, x )              \\
	& \leqslant &	
	\frac{C+C' }{(1 - \| \Gamma \| _{C ( \T ^2 \times J )}) ( 1 - \| \sigma \| _{C ( \T^2 )} )} 
\end{eqnarray*}
for all $ (\xi, x, y) \in \T ^2 \times I $.
The differentiability and the boundedness of $\partial X_3/\partial y$ follow from this.

The proof of the uniform bound \eqref{eq:X3deriv-x-bound} is more tedious, but it goes along the same lines. We leave it to the reader and note only that, without this bound,
all $\CLip$-assertions in Section~\ref{sec:results} would have to be replaced by $C^1$.
\end{proof}

\end{document}